\documentclass[a4paper]{article}

\usepackage{a4wide}
\usepackage{amsfonts}
\usepackage{float,graphicx}
\usepackage{amsmath}
\usepackage[amsmath,thmmarks,hyperref]{ntheorem}
\usepackage{xcolor, hyperref}


\newcommand{\proofbox}{{\rule{1.5ex}{1.5ex}}}
\theoremstyle{nonumberplain}
\theoremheaderfont{\normalfont\itshape}
\theorembodyfont{\normalfont}
\theoremseparator{.}
\theoremsymbol{\proofbox}
\newtheorem{proof}{Proof}

\theoremstyle{plain}
\theoremheaderfont{\normalfont\sffamily}
\theorembodyfont{\normalfont\itshape}
\theoremseparator{.}
\theoremsymbol{}
\newtheorem{theorem}{Theorem}

\newcommand{\newsiamthm}[2]{
  \theoremstyle{plain}
  \theoremheaderfont{\normalfont\sffamily}
  \theorembodyfont{\normalfont\itshape}
  \theoremseparator{.}
  \theoremsymbol{}
  \newtheorem{#1}[theorem]{#2}
}

\newsiamthm{lemma}{Lemma}
\newsiamthm{corollary}{Corollary}
\newsiamthm{proposition}{Proposition}
\newsiamthm{definition}{Definition}
\newtheorem{remark}[theorem]{Remark}

\title{Rating transitions forecasting: a filtering approach}

\newcommand{\email}[1]{#1}
\author{Areski Cousin\thanks{Institut de Recherche en Math\'ematique Avanc\'ee, Universit\'e de Strasbourg,
7 rue Ren\'e Descartes, 67084 Strasbourg, cedex, France. (\email{a.cousin@unistra.fr}).} 
\and J. Lelong\thanks{Univ. Grenoble Alpes, CNRS, Grenoble INP, LJK, 38000 Grenoble, France. 
  (\email{jerome.lelong@univ-grenoble-alpes.fr}).}
\and T. Picard\thanks{Univ. Grenoble Alpes, LJK, 38000 Grenoble, France and Nexialog, 75011 Paris, France.
  (\email{tom.picard@grenoble-inp.org}).}}




\newcommand{\diff}{\mbox{d}}

\newcommand{\ga}{\gamma}

\newcommand{\ka}{\kappa}

\newcommand{\bfA}{\mbox{\boldmath $A$}}

\newcommand{\E}{{\mathbb E}}
\newcommand{\Y}{{\mathbb Y}}

\newcommand{\T}{{\mathbb T}}

\newcommand{\bfF}{\mbox{\boldmath $F$}}

\newcommand{\bfG}{\mbox{\boldmath $G$}}

\newcommand{\Prob}{{\mathbb P}}

\newcommand{\J}{{\mathbb J}}

\newcommand{\cF}{{\cal F}}
\newcommand{\cG}{{\cal G}}




\def\cF{\mathcal{F}}
\def\cG{\mathcal{G}}






\definecolor{myblue}{RGB}{58, 110, 255}
\definecolor{blue}{HTML}{1F77B4}
\definecolor{orange}{HTML}{FF7F0E}
\definecolor{green}{HTML}{2CA02C}


\usepackage{dsfont}
\usepackage{hhline} 
\usepackage{scalerel,stackengine}
\usepackage[toc,page]{appendix}
\stackMath
\newcommand\reallywidehat[1]{%
\savestack{\tmpbox}{\stretchto{%
  \scaleto{%
    \scalerel*[\widthof{\ensuremath{#1}}]{\kern-.6pt\bigwedge\kern-.6pt}%
    {\rule[-\textheight/2]{1ex}{\textheight}}
  }{\textheight}%
}{0.5ex}}%
\stackon[1pt]{#1}{\tmpbox}%
}
\parskip 1ex

\begin{document}

\maketitle

\begin{abstract}
Analyzing the effect of business cycle on rating transitions has been a subject of great interest these last fifteen years, particularly due to the increasing pressure coming from regulators for stress testing. In this paper, we consider that 
the dynamics of rating migrations, in a pool of credit references, is governed by a common unobserved latent Markov chain. We explain how the current state of the hidden factor, can be efficiently inferred from observations of rating histories.
We then adapt the classical Baum-Welch algorithm to our setting and show how to estimate the latent factor parameters. Once calibrated, we may reveal and detect economic changes affecting the dynamics of rating migration, in real-time. The filtering formula is then used to predict future transition probabilities according to the economic cycle without using any external covariates. We propose two filtering frameworks: a discrete and a continuous version. We demonstrate and compare the efficiency of both approaches on fictive data and on a corporate credit rating database. The methods
could also be applied to retail credit loans. Finally, under a point process filtering framework, we extend the standard discrete-time filtering formula to a more general setting, where the hidden process does not need to be a Markov chain. 
\end{abstract}

\section{Introduction}

Credit risk research has been on the rise over the last 20 years. In particular, the challenges that arose from the previous financial crisis prompted researchers to develop credit risk valuation models that take into account the evolution of the business cycle. The evolution of the banking supervisor regulations and accounting rules follow this trend: official guidelines of IFRS 9 as \cite{IFRS92016} recommend the use of point-in-time estimation of credit risk, i.e., the use of  macro-economic factors in the credit risk assessment process. 
Moreover, the EBA guidelines \cite{PDLGD2017} on LGD downturn, require to identify economic downturn periods to adjust the initial LGD estimations. In addition,  EBA stress testing methodology described in \cite{Stress2018} strongly relies on past economical scenarios.
\\
\\
A credit rating system evaluates the confidence in the ability of the borrower to comply with the credit's terms. A default probability is associated with each rating, which under Basel regulations, impacts the amount of capital required for a credit (see \cite{Bale2017}).
Such ratings may be generated by internal rating systems (IRB) or issued, by external rating agencies. 
After the assignment of the initial credit rating, reviews are performed either periodically or based on market events. In that way an entity's rating may evolve through time according to its health and to the economic cycle.
Therefore, predicting the evolution of rating migrations is of primary importance for every financial institution. The migrations of a group of credit entities can be described by transition matrices, defining the probabilities to move from one rating state to another in a given period of time. Given recent evolution in banking supervisory and accounting rules, the challenge is to explain changes in transition probabilities due to changes in the business cycle. 
\\
\\
Factor-based migration models provide a nice framework for capturing migration sensitivities to macro-economic changes. Models in this class allow transition probabilities to depend on dynamic factors. Two main families of models are usually considered in the credit risk literature~: the ``ordered Probit'' (or structural approach) introduced by \cite{tobin1958estimation}, popularized by \cite{merton1974pricing} and studied for credit ratings , e.g., in \cite{bangia2002ratings}, \cite{feng2008ordered}, \cite{Gagliardini2005}, \cite{nickell2000stability} and the ``multi-state latent factor intensity model'' (or intensity approach) studied, e.g., in \cite{figlewski2012modeling}, \cite{kavvathas2001estimating}, \cite{koopman2008multi} and \cite{Leijdekker_Spreij2008}.
This paper focuses on the second approach.
\\
\\
In the basic reduced intensity form model, a credit event corresponds to the first jump time of a Poisson process with a constant intensity. The reduced form approach has been widely studied in the credit risk literature, see, e.g., \cite{duffie2007multi}, \cite{jarrow1997markov}. Nevertheless \cite{altman1992implications}, \cite{fledelius2004non} and \cite{hamilton2004rating} provide evidences that migration intensities vary over time. In their research, \cite{hamilton2004rating} and \cite{lando2002analyzing} show that the rating transition probabilities depend on whether the bond entered its current rating by an upgrade or a downgrade. \cite{lando2002analyzing} also notice that the probability to leave a rating category tends to decrease with the time spent at that rating. Above all, \cite{bangia2002ratings}, \cite{nickell2000stability} give strong evidence that credit risk is considerably affected by the macroeconomic conditions and differs across different economic regimes. 
\\
\\
In both structural and intensity models, the factors may be considered observable or unobservable.
The second approach has emerged in response to criticisms made against the first. As \cite{Gagliardini2005} point out, the risk in selecting covariates lies in excluding others which could be more relevant.
\cite{Reda2015} provide an overview of usual modelling and estimation approaches and compare the estimation and the predictive performance of each approach on real data. When the underlying factors are unobservable, they adapt a method given in \cite{Gagliardini2005} to represent the considered factor migration model as a linear Gaussian model, and apply a Kalman filter to predict the state of the underlying latent factor. This approximation lies on the hypothesis that the data set is large enough to apply asymptotic normality. This assumption may be too restrictive 
and may explain the poor quality of predictions obtained by \cite{Reda2015}. 
\\
\\
A natural alternative consists in directly filtering the hidden factor given rating transitions' past history. For a bond portfolio, the dynamics of rating migrations can mathematically be represented as a multivariate counting process, each component representing the cumulative number of transitions from one rating category to another. Estimating the hidden factor dynamics by only using observations of the counting process has already been considered in the credit risk literature. For instance, \cite{fontana2010credit} and \cite{frey2012pricing} follow this approach for pricing derivatives under incomplete information. 
\\
\\
A realistic and standard setting assumes that the unobserved driving factor is given as a finite state Markov chain and that the rating transition process follows a Hidden Markov model (HMM). \cite{Bremaud1981} and \cite{elliott2008hidden} respectively present a detailed analysis of continuous-time and discrete-time filtering under special HMM assumptions. Hidden Markov Chain modeling (HMM) remains a popular approach in credit risk analysis (see e.g., \cite{ching2009modeling}, \cite{elliott2008hidden}, \cite{elliott2014double}, \cite{thomas2002hidden}). The hidden process can have different interpretations according to the assumptions made and the way to filter. In the credit rating literature, \cite{korolkiewicz2008hidden} assume that the observed rating of a firm is a noisy observation of its true credit rating, represented by a hidden Markov chain. They apply an Expectation Maximisation (EM) algorithm for hidden Markov models under a discrete-time setting. They calibrate the filtering formula applied to Markov chains, derived in \cite{elliott2008hidden}, to infer from its rating evolution, the true credit quality of a firm. \cite{damian2018algorithm} extend the parameter estimation via the EM algorithm to continuous-time hidden Markov models. Similarly, they infer the true credit quality from rating observations but also with credit spreads. In these studies, each firm has its own true rating process, therefore its rating dynamic is governed by its own hidden process. Then the dynamic of rating transition of entities are governed by independent and identically distributed hidden Markov chains. Therefore, rating observations are assumed to be independent and an aggregated calibration procedure can be made.  
In the context of this paper, the hidden factor is interpreted as a systematic and common factor, governing transitions of all firms.
Among the studies which share the same interpretation, \cite{giampieri2005analysis}, also use the classical Baum-Welch algorithm (introduced in \cite{baum1970maximization}), for estimation of a two-state hidden factor driving occurrence of defaults. They obtain estimates for the model parameters and are able to reconstruct the most likely past sequence of the hidden factor. Their approach only holds for an unique transition and is not suitable for providing online estimations of the hidden factor state. In the same vein \cite{deroose2008reviewing} and \cite{oh2019estimation} identify two states, one of expansion and the other of contraction. 
In particular, \cite{oh2019estimation}  use an extension of the Baum-Welch algorithm adapted to ``regime switching hidden Markov model'' (RSMC) to forecast sovereign credit rating transitions. In a different scope, they also assume that every rating processes are governed by independent and identically distributed Markov chains.
\\
\\
The contributions of this paper are both theoretical and practical. We apply filtering framework to credit migrations, and show how to infer the current state of the hidden factor from past rating transitions. An EM algorithm is adapted to estimate the parameters involved. Contrary to \cite{damian2018algorithm}, \cite{korolkiewicz2008hidden}, \cite{oh2019estimation}, we assume that the dynamics of rating migrations in a pool of credit references, is governed by a common unobserved latent Markov chain, which aims to represent the economic cycle. Therefore the realization of the unobservable factor is assumed to be common to every firm whereas one hidden factor per bond is considered in \cite{damian2018algorithm}, \cite{korolkiewicz2008hidden}, \cite{oh2019estimation}. We believe that our approach which rather keeps the dependencies within the observations sample, is reliable and realistic. Indeed, rating entities should be affected by the same realization of the economic factor. This different consideration changes the way to calibrate and to filter: our filtering framework uses the whole history of aggregated number of jumps. Once calibrated, we may reveal and detect economic changes affecting the dynamics of rating migration, in real-time. By updating the filtered factor, we are able to forecast rating transitions according to these economic changes. Our approach may be considered as a new Point-in-time (PIT) rating transitions modeling which does not use any macro-economic factors.
\\
Behind every model mentioned, choosing a continuous or discrete approach is crucial and is a matter of debate. This paper aims at participating to this debate by presenting different results: we adapt filtering formulas, derived under special HMM assumptions in \cite{Bremaud1981} and\cite{elliott2008hidden}, to migration ratings context, both in a continuous-time and discrete-time setting. In particular, we show how to adapt the continuous-time filtering framework to handle discrete-time data and simultaneous jumps. 
We assess and compare both approaches on a fictive data set and on a Moody's ratings history [01/2000-05/2021] of a diversified portfolio of 5030 corporate entities. Finally, to further pursue the study, under a point process filtering framework, we derive a general discrete-time filtering formula which extends the standard Markov case. 
\\
\\
The paper is organised as follows. First, Section \ref{sec:rating-discrete} presents the discrete-time filtering framework adapted to credit rating migrations. Correspondingly, Section \ref{sec:rating-continuous} describes the continuous-time filtering version adapted to the same context. Section \ref{sec:fictiveData} illustrates and validates the two filtering approaches on fictive data. Then, in Section \ref{sec:realData}, we compare the two filters on real data sets. Finally, we present a general discrete-time point process filtering equation throughout Section \ref{sec:discreet-filter}, where the Markov assumption is relaxed.

\section{Discrete-time filtering for rating migrations}
\label{sec:rating-discrete}
We aim to adapt the discrete-time  filtering framework developed for hidden Markov chain in \cite{elliott2008hidden}, to the context of rating migrations. We consider that a common Markov chain governs the dynamics of all transitions. This hidden process may carry the systematic risk shared by rating transitions and might be interpreted as the economic factor. We first present the formula in the context of a single pair of rating categories (a single transition from one given rating, to another). Then we extend the approach to multiple rating transitions. 
\\
\\
Let $\Gamma \in \mathbb{N}$ be a discrete time horizon. We work with the filtered probability space $(\Omega,\bfA,\bfF = (\cF_n)_{n \in \{0,\ldots,\Gamma\}},\Prob)$. Let $\Theta$ be a Markov chain with finite number of states in $\T = \{1,\ldots,m\}$. Let $\bfF^{\Theta}$ be the natural filtration of $\Theta$,  augmented with $\Prob-$null sets. Let's define, for $h \in \T, \ n \in
\{0,\ldots,\Gamma\}$, $ I_n^h=\mathds{1}_{[\Theta_{n}=h]}$, the indicator function of $\Theta$ on state $h$, at time $n$.
\\
We consider the list of rating categories $\bar{\Upsilon}= \{1, \ldots, p \}$. This space represents different credit risk scores or ratings in descending order, $p$ being the default state. For example, Standard and Poor's long-term investment ratings can be translated to AAA = $1$, AA = $2$, A = $3$, BBB =$4$, \ldots, D (Default) = $10$. In practice the number of credit entities monitored over time may vary, either because some names are censored or simply because of missing data. This consideration is deeply discussed in Section \ref{dataDescription}. We attribute the rating $0$ to an entity in this case. Then, it is clear that a transition involving the rating of censure $0$, is assumed to be independent with the states of the hidden factor. Then we call ${\Upsilon}= \{0, \ldots, p \}$, the completed list of ratings. Note that, with this setting, the number of entities observed on ${\Upsilon}$ is constant over time and equal to $Q$. 
Let $Z_n^q\in {\Upsilon}$,  be the random variable, describing the state of bond $q$, $q\in\{1,\dots,Q\}$, at time $n \in \{0,\ldots,\Gamma\}$ and let $Z^q = (Z_n^q)_{n \in \{0,\ldots,\Gamma\}}$ be the migration process that describes its evolution. The counting process, which counts the total number of jumps of the entities, from rating $i$ to $r$, $(i,r)\in {\Upsilon}^2$, is denoted by $N^{ir}$ and is such that, $\forall n \in \{1,\ldots,\Gamma\}$,
$$
\Delta N_{n}^{ir}=\sum_{q\leq Q}\mathds{1}_{[Z_{n-1}^q = i, Z_n^q = r]}.
$$
Let $\bfF^N$ be the natural filtration of $N$, augmented with $\Prob-$null sets. For $n \in \{1, \dots, \Gamma\}$, we introduce for every process $O$, the notation
$$
\hat{O}_{n}=\E[O_{n}|\cF_{n}^N].
$$
In addition, let us denote by the process $Y^i$ representing the number of observed and active entities that belong to rating $i$, which may jump to another one. It may evolve over time, according to censorship, arrivals of new entities on the market with initial rating $i$, rating transitions or bankruptcies.
This process is assumed to be $\bfF^{N}$--predictable. In this framework, $\Theta$ aims to represent the systematic risk factor.
It is unique and governs dynamics of all rating transitions. Furthermore, for the sake of tractability, it is assumed that $\Theta$ impacts entities with the same rating in the same way. Consequently, we consider that entities with the same rating, are perfectly indistinguishable.
Under this exchangeable setting, to infer information on the underlying hidden factor $\Theta$, it is sufficient to observe the aggregated counting processes $(\Delta N^{ir})_{i,r\in \Upsilon}$ and the processes $(Y_i)_{i\in \Upsilon}$. The number of jumps from $i$ to $r$ can not exceed the number of active entities. Then, the support of $\Delta N_n^{ir}$ is $\J^i_n=\{0,\dots, Y_n^i\}$.
Let us define the support of $\Delta N_n$, the product spaces $\J_n^{\otimes}=\prod_{i=1}^p \J^i_n$.
We define the transition probabilities of $\Theta$ as 
$$
\forall (s,h) \in \T^{2}, \ \forall n \ \in \{1,\ldots,\Gamma\}, \ K^{sh}=\Prob(\Theta_{n}=h|\Theta_{n-1}=s), \  \ \Pi^h=\Prob(\Theta_0=h).$$
\newpage
\subsection{Unique rating transition}
In a first framework, we present a specific case of the general setting presented above. we consider a unique transition from a rating called $r_0\in\bar{\Upsilon}$ to another, called $r_1\in\bar{\Upsilon}$. $N$ is reduced to $N=N^{r_{0},r_1}$, the associated univariate counting process which counts the total number of jumps of the entities, from rating $r_0$ to $r_1$, such that, $\forall n \in \{1,\ldots,\Gamma\}$,
$$
\Delta N_{n}=\sum_{q\leq Q}\mathds{1}_{[Z_{n-1}^q = r_0, Z_n^q = r_1]}.
$$
Let $\bfF^N$ be the natural filtration of $N$, augmented with $\Prob-$null sets. For $n\in \{1,\ldots,\Gamma\}$, $Y_n$ represents the number of active and observed entities with rating $r_0$ at time $n-1$, that may jump to rating $r_1$ at time $n$. We present here the recursive equation satisfied by $\hat{I}_{n}^{h}=\E[\mathds{1}_{[\Theta_{n}=h]}|\cF_{n}^N]$. We define, for any $n \in \{1,\ldots,\Gamma\} \ \text{and} \ s \in \T$, the conditional transition probabilities as 
$$L^{s}=\Prob(Z_{n}^q=r_1|Z_{n-1}^q=r_0, \Theta_{n-1}=s).$$
According to the previous notations, the number of jumps from $r_0$ to $r_1$ cannot exceed the number of active entities. Then, the support of $\Delta N_n$ is $\J_n=\{0,\dots, Y_n\}$. Knowing that $\{\Theta_{n-1}=h, Y_{n}=y_{n}\}$, we assume that the conditional distribution of the random variable $\Delta N_{n}$ is binomial with parameters $(y_{n},L^h)$. Similar settings can be found in \cite{caja2015influence} and \cite{giampieri2005analysis}. 
\begin{proposition}
\label{prop:discreet-univariate-rating}
With these assumptions, the filtered process $\hat{I}_{n}^h$ solves the following recursive equation. For $n=1,\dots,\Gamma$,
\begin{equation}
\label{eq:discreet-univariate-rating}
\hat{I}_{n}^{h}=\sum_{j \in \J_n}\frac{\sum_{s=1}^m K^{s h}(L^{s})^{j}(1-L^{s})^{Y_{n}-j}\hat{I}_{n-1}^{s}}{\sum_{s=1}^m(L^{s})^{j}(1-L^{s})^{Y_{n}-j}\hat{I}_{n-1}^{s}}\mathds{1}_{[\Delta N_{n}=j]}.
\end{equation}
\end{proposition}
\begin{proof} 
This formula can be derived from the general discrete-time filtering formula, presented later in this paper, in Section \ref{sec:discreet-filter}. This approach is described in Remark \ref{rem:other-proof}. This formula can also be derived from the filtering formula \cite[Chapter~2-Theorem 4.3]{elliott2008hidden}. However, the filtering formula must be applied to the context described above. We apply the formula to the counting process $\Delta N$, considered as an observable Markov chain with finite number of states, $\J_n$ at time $n\in\{1,..\Gamma\}$, and governed by the common hidden Markov chain $\Theta_n$.  
\end{proof}
\begin{remark}
For the sake of interpretability, our setting assumes that the entities should be affected by the same realisation of the economic factor. Therefore, our filtering framework uses the whole history of aggregated number of jumps, keeping the dependencies within the observations sample.
\end{remark}
In this framework, the hidden factor governs a unique transition. It might be more realistic to assume that it affects all transitions. 
Then, we naturally extend the previous equation to a multivariate setting.
\subsection{Multiple Rating transitions}
In this application, we extend the previous result by considering multiple rating transitions. We present now the recursive equation satisfied by $\hat{I}_{n}^{h}=\E[\mathds{1}_{[\Theta_{n}=h]}|\cF_{n}^N]$, where $N=(N^{ir})_{ir\in\Upsilon}$. According to this setting, we define the conditional transition probabilities of $(Z^{q})_{q}$ as
\[\text{For} \ (i,r) \in {{\Upsilon}}^{2}, \ s \in \T \ \text{and} \ \forall n \ \in \{1,\ldots,\Gamma\}, \ L^{s,ir}=\Prob(Z^{q}_{n}=r|Z^{q}_{n-1}=i, \Theta_{n-1}=s).\]
Note that if $i=0$ or $r=0$ then  $\forall s \ \in \T, \ L^{s,ir}=\Prob(Z^{q}_{n}=r|Z^{q}_{n-1}=i)$. Indeed, the transitions from or to the rating $0$ are assumed to be independent of the hidden factor because censorship is non-informative. 
\\
Since only one realisation of trajectory of $\Theta$ governs observed rating processes, the random variables $\{Z_n^{q},\ q\leq Q\}$, for $n\in\{1,\ldots,\Gamma\}$, are still not independent. Nevertheless, knowing the sate of $\Theta_n$, they are independent. 
Then, the conditional distribution of the multivariate random variable $\Delta N^{ir}_n$, knowing that $\{\Theta_{n-1}=s, Y^i_{n}=y^i_{n}\}$, is multinomial with parameters $(y^i_{n},(L^{s,ir})_r)$.
\begin{proposition}
With such assumptions, the filtered process $\hat{I}_{n}^h$ is solution of the following recursive equation
\begin{equation}
\label{eq:multivariate-discreet-credit} 
\hat{I}_{n}^h=\sum_{\delta \in \J^{\otimes}_n}\frac{\sum_{s} K^{s h}  \prod_{i,r=1}^p (L^{s,ir})^{\delta_{ir}}\hat{I}_{n-1}^{s}}{\sum_{s}   \prod_{i,r=1}^p (L^{s,ir})^{\delta_{ir}}\hat{I}_{n-1}^s}\mathds{1}_{[\Delta N_{n}=\delta]}.
\end{equation}
\end{proposition}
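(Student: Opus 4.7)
The proof plan is to specialize the general multivariate discrete filtering formula \eqref{eq:multivariate-discreet-filter} to the case $\Theta = I^h$, exactly as was done in the univariate setting of Proposition \ref{prop:discreet-univariate-rating}. The index set $s \in [0,J]^{p^2}$ corresponds to a full multivariate increment profile $\delta N_n = (\delta N_n^{jk})_{j,k}$.

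First, I would invoke Doob's decomposition (the remark following Proposition~\ref{prop}) to write $I^h_n = A^h_n + M^h_n$ with $A^h$ predictable and $M^h$ a $\bfF$-martingale, so that \eqref{eq:multivariate-discreet-filter} applies with $\Theta_n = I^h_n$. Then I need to compute the two conditional expectations appearing in the right-hand side of \eqref{eq:multivariate-discreet-filter}, namely $\hat{\lambda}_{n-1}^{\delta N_n}$ and $\widehat{(\tilde{I}^h \lambda^{\delta N_n})}_{n-1}$.

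The key modelling input is that, given $\Theta_{n-1}$ and the $\bfF^N$-predictable exposures $(Y^j_n)_j$, the starting-rating groups are processed independently and each group follows a multinomial distribution. Concretely, for any admissible profile $\delta N_n$ (i.e.\ satisfying $\sum_k \delta N_n^{jk} = Y_n^j$ for every $j$),
\begin{equation*}
\lambda_{n-1}^{\delta N_n} = \E[\mathds{1}_{[\Delta N_n = \delta N_n]} \mid \cF_{n-1}] = \sum_i I^i_{n-1}\, c^i(\delta N_n),
\qquad c^i(\delta N_n) := \prod_{j=1}^p \binom{Y_n^j}{\delta N_n^{j1},\ldots,\delta N_n^{jp}} \prod_{k=1}^p (L^{i,jk})^{\delta N_n^{jk}}.
\end{equation*}
Taking conditional expectation w.r.t.\ $\cF^N_{n-1}$ and using $\bfF^N$-predictability of $Y$ yields $\hat{\lambda}_{n-1}^{\delta N_n} = \sum_i c^i(\delta N_n)\, \hat{I}^i_{n-1}$. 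For the numerator, the Markov property of $\Theta$ gives $\tilde{I}^h_{n-1} = \E[I^h_n\mid\cF_{n-1}] = \sum_i K^{ih} I^i_{n-1}$, and combining with the expression for $\lambda_{n-1}^{\delta N_n}$ and using the orthogonality $I^i_{n-1} I^s_{n-1} = \delta_{is} I^i_{n-1}$ produces
\begin{equation*}
\tilde{I}^h_{n-1}\,\lambda_{n-1}^{\delta N_n} = \sum_i K^{ih}\, c^i(\delta N_n)\, I^i_{n-1}, \qquad \widehat{(\tilde{I}^h \lambda^{\delta N_n})}_{n-1} = \sum_i K^{ih}\, c^i(\delta N_n)\, \hat{I}^i_{n-1}.
\end{equation*}

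Plugging these two expressions into \eqref{eq:multivariate-discreet-filter}, the multinomial coefficients $\binom{Y_n^j}{\delta N_n^{j1},\ldots,\delta N_n^{jp}}$ factor out of both sums and cancel, leaving exactly the claimed formula \eqref{eq:multivariate-discreet-credit}. The only genuine work is the multinomial conditional-law computation and keeping the bookkeeping straight when multiplying two sums indexed by the hidden states; the cancellation of the combinatorial coefficients and the vanishing of cross terms via $I^i I^s = \delta_{is} I^i$ are the mechanical engines that make the ratio collapse to the stated form. No new difficulty beyond the univariate proof arises, so one may indeed leave the details to the reader as the authors do.
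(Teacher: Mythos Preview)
Your proposal is correct and follows exactly the route the paper intends: it mirrors the univariate proof of Proposition~\ref{prop:discreet-univariate-rating} step by step, replacing the binomial conditional law by the product of multinomials, computing $\tilde I^h_{n-1}=\sum_i K^{ih}I^i_{n-1}$ via the Markov property, collapsing cross terms with $I^i_{n-1}I^s_{n-1}=\delta_{is}I^i_{n-1}$, and cancelling the combinatorial coefficients in the ratio. This is precisely what the authors mean by ``along the same lines,'' so nothing further is needed.
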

\begin{proof}
We leave the proof to the reader as it goes along the same lines as the proof of Proposition~\ref{prop:discreet-univariate-rating}.
\end{proof}
Once the hidden factor filtered state is obtained, it is then possible to predict the future migration probabilities.\\

\subsection{Transition probability prediction}
We define for $(i,r) \in \bar{\Upsilon}^2$, the process $\nu^{ir}$, which forecasts the transition probability from rating $i$ to rating $r$, for the next time step.  
$$
\forall \ (i,r) \in \bar{\Upsilon}^2, \forall n \in \{1,\ldots,\Gamma\} :\nu_{n-1}^{ir}=\E\left[\mathds{1}_{[Z^q_{n}=r]}|Z^q_{n-1}=i, \cF_{n-1}\right]=\sum_{h \in \T}L^{h,ir}{I}_{n-1}^h.
$$
With the filtered current hidden factor, we can forecast the future transition probabilities 
\begin{equation}
\label{prediction}
\forall \ (i,r) \in \bar{\Upsilon}^2, \forall n \in \{1,\ldots,\Gamma\} :\hat{\nu}_{n-1}^{ir}=\E\left[\mathds{1}_{[Z^q_{n}=r]}|Z^q_{n-1}=i, \cF_{n-1}^N\right]=\sum_{h\in \T}L^{h,ir}\hat{I}_{n-1}^h.
\end{equation}
\subsection{Calibration}
\label{sec:calibration}
In this section, we explain how to estimate model parameters involved in the filtering equation (\ref{eq:multivariate-discreet-credit}). We apply the so-called Baum-Welch algorithm to our discrete-time framework. 
\subsubsection{A Baum-Welch algorithm adapted for a discrete framework}
The proposed method is a maximisation expectation (EM) algorithm for hidden Markov chains (HMM), adapted to the model. We can find studies on the classical model in \cite{bishop2006pattern}, \cite{Ozcan2013}, \cite{Rabinet1989} and \cite{Tenyakov2014}. 
\\
However the classical algorithm is not totally suitable for calibration of the discrete filtering equation (\ref{eq:multivariate-discreet-credit}).
We highlight one inconsistency between the classical algorithm and our model. Rating process trajectories of each entity must be independent whereas in our framework, they are dependent through the common factor $\Theta$.
\\
The first step of the algorithm assigns initial values to the parameters we want to estimate. Then the algorithm replaces the missing data (states of $\Theta$) with Bayesian estimators using the observations and the current parameters estimated values.
\\
The second one consists in improving a conditional likelihood. Better parameters are estimated. Then these new estimates are used to repeat the first step. We iterate this process to converge to a local maximum.
\\
Let $Z=(Z^q)_{q\leq Q}$ be the multivariate rating process and we call for $(n_1,n_2) \in \{0,\ldots,\Gamma\}^2$, $(Z)_{n\in \{n_{1},\ldots,n_2\}}=Z_{n_{1}|n_{2}}$, the rating trajectories between time $n_1$ and $n_2$. As the new rating does not only depend on the economic cycle (state of $\Theta$) but also on the previous rating, we apply the Baum-Welch algorithm by considering that 
$$
\forall n \in \{1,\ldots,\Gamma\}, \ \Prob(Z_{n}|Z_{0},\ldots,Z_{n-1},\Theta_{0},\ldots,\Theta_{n-1})=\Prob(Z_{n}|Z_{n-1},\Theta_{n-1}).
$$
Furthermore, as rating history of all entities are dependent on a same realization of $\Theta$, we must adapt our algorithm differently from \cite{oh2019estimation} who considered that each rating process is governed by its own and independent trajectory of $\Theta$.

\subsubsection{Initialization}
\label{sec:init}
The calibration algorithm presented is based on iterative improvement of a likelihood. This expectation maximization algorithm (EM) (see \cite{dempster1977maximum}), as most of iterative maximisation algorithms, might be trapped in a local maximum. 
Obtained parameters may not be relevant when the global maximum is not found. This success is deeply dependant on the initialization.
Several empirical and analytical methods have been proposed to deal with this matter. In \cite{liu2014proper}, transition probabilities are initiated using empirical frequencies. They succeed to considerably reduce the number of iterations to find their local maximum. By noticing that the transition matrices have strong diagonals, \cite{oh2019estimation} initialized their model by adding small perturbations to identity matrix or to uniform distributions.
In our study we choose a third option which seems to be more reliable: we test a high number of initial values (picked at random) in order to find the global maximum. 
In order to guarantee almost surely convergence to the global maximum, initial values are chosen according to a uniform distribution on the parameters space.

\subsubsection{Bayesian estimators}
This part only presents the main results of the algorithm. One can find more details of the computations in Appendix \ref{proof:calibration-discreet}.
\\
We define the forward probability as denote,
$$
\forall s \in \T, \forall n \in \{1,\ldots,\Gamma\}: 
\alpha_{n}(s)=\Prob(Z_{0|n}=z_{0|n},\Theta_{n-1}=s)
$$
and the backward probability as
\\
$$
\forall s \in \T, \forall n \in \{1,\ldots,\Gamma-1\}: 
\beta_{n}(s)=\Prob(Z_{n+1|\Gamma}=z_{n+1|\Gamma}|Z_{n}=z_{n},\Theta_{n-1}=s).
$$
\\
We use the following recursive formulas in order to compute the two previous probabilities
$$
\label{alpha}
\alpha_{n}(s)=\sum_{l=1}^{m}\alpha_{n-1}(l)K^{ls}\prod_{i,r \in {\Upsilon}}(L^{s,ir})^{\Delta N_n^{ir}},
$$
$$
\label{beta}
\beta_{n}(s)=\sum_{l=1}^{m}\beta_{n+1}(l)K^{sl}\prod_{i,r \in {\Upsilon}}(L^{l,ir})^{\Delta N_{n+1}^{ir}}.
$$
For $n \in \{1,\ldots,\Gamma\}$, we introduce two random variables useful to describe $\Theta$
\\
$$
\label{u}
u_{n}(h)=\mathds{1}_{[\Theta_{n}=h]},
$$
$$
\label{v}
v_{n}(s,h)=\mathds{1}_{[\Theta_{n}=h,\Theta_{n-1}=s].}
$$
The forward and backward probabilities are helpful to compute the following Bayesian estimators 
\\
$$
\check{u}_{n}(h)=\Prob(\Theta_{n}=h|Z_{0|\Gamma}=z_{0|\Gamma})=\frac{\beta_{n+1}(h)\alpha_{n+1}(h)}{L_{\Gamma}},
$$
and 
$$
\check{v}_{n}(s,h)=\Prob(\Theta_{n}=h,\Theta_{n-1}=s|Z_{0|\Gamma}=z_{0|\Gamma})=\frac{\beta_{n+1}(h)K^{sh}\alpha_{n}(s)\prod_{i,r \in {\Upsilon}}(L^{h,ir})^{\Delta N_n^{ir}}}{L_{\Gamma}},
$$
where $L_{\Gamma}$ is the likelihood of the whole sample,
$$
L_{\Gamma}=\Prob(Z_{0|\Gamma}=z_{0|\Gamma})=\sum_{j=1}^m\alpha_{\Gamma}(j).
$$

\subsubsection{Parameters estimation}
The maximization phase consists in finding better parameters than those of the previous iteration.
We call $M^{(\gamma)}=(\Pi^{(\gamma)},L^{(\gamma)},K^{(\gamma)})$, the parameters obtained at the iteration $(\gamma)$.
\\
The new parameters are deemed to improve the likelihood according to: 
$$
\Prob(Z_{0|\Gamma}=z_{0|\Gamma}|M^{(\gamma+1)})\geq \Prob(Z_{0|\Gamma}=z_{0|\Gamma}|M^{(\gamma)}).
$$
To achieve that, we are looking for maximizing $\log\frac{\Prob(Z_{0|\Gamma}=z_{0|\Gamma}|M^{(\gamma+1)})}{\Prob(Z_{0|\Gamma}=z_{0|\Gamma}|M^{(\gamma)})}$, which is equivalent to maximize
\\
\\
$$
Q(M^{(\gamma)},M^{(\gamma+1)}) =\sum_{\theta \in \{1,\ldots,m\}^\Gamma} \Prob(\Theta_{0|\Gamma}=\theta,Z_{0|\Gamma}=z_{0|\Gamma}|M^{(\gamma)})\log\Prob(\Theta_{0|\Gamma}=\theta,Z_{0|\Gamma}=z_{0|\Gamma}|M^{(\gamma+1)}).\\
$$
\\
After optimization, we obtain the following estimators
\\
$$
\Pi^{h}=\check{u}_{0}(h); \quad
L^{s,ir} =\frac{\sum_{n=1}^\Gamma\check{u}_{n-1}(s)\Delta N_n^{ir}}{\sum_{n=1}^\Gamma\check{u}_{n-1} (s)Y_n^i}; \quad 
K^{sh} =\frac{\sum_{n=1}^\Gamma\check{v}_{n}(s,h)}{\sum_{n=1}^\Gamma\check{u}_{n-1}(s)}.
$$

\section{Continuous-time filtering for rating migrations}
\label{sec:rating-continuous}
In this section, we explain how to apply continuous filtering framework to credit rating migrations.
\subsection{Multiple Rating transitions}
Let $(\Omega,\bfA,\bfF = (\cF_t)_{t \in [0,T]},\Prob)$,
 be a filtered probability space satisfying the ``usual conditions'' of right-continuity and completeness needed to justify all operations to be made. All stochastic processes encountered are assumed to be adapted to the filtration $\bfF$ and integrable on $[0,T]$. In particular, we have $\bfA=\cF_{T}$.
The time horizon $T$ is supposed to be finite.
In order to remain realistic and to fix the terminology, a bond market containing a finite number of individual bonds is considered. All bonds are affected by variable and random market conditions represented by the same latent process $\Theta$. The hidden factor driving process $\Theta$ is assumed to be a Markov chain with finite number of states  in $\T$ and with constant transition intensities $k^{rh}$, $r \neq h$ and such that $k^{rr} = - \sum_{l; l \neq r} k^{rl}$, so that, for small enough $dt$,
\begin{equation}
\label{K-to-k}
\forall \ r \neq h \in \T^2 :\Prob (\Theta_{t+dt} = h\,|\, \Theta_t = r) \approx k^{rh}\, dt.
\end{equation}
The initial distribution of $\Theta$ is defined as
$$
\forall h \in \T : \Pi_{h}=\Prob(\Theta_{0}=h).
$$ 
Let us introduce the state indicator processes $I^h$, $h \in \T= \{1,\ldots,m\}$, defined by
$$
I_t^h = \mathds{1}_{[\Theta_t = h]}, \; h \in \T.
$$
A bond $q$ of the sample is observed between the dates $s^q$ and $u^q$, $0 \leq s^q \leq u^q \leq T$. 
We consider that the bond $q$ may evolve in the same credit rating categories space, $\bar{\Upsilon} = \{1, \ldots, p \}$, than for the discrete-time framework.  
\\
Let $\Y=\{(i,j) \in \bar{\Upsilon}^2,\  i \neq j\}$, be the space of possible migrations.
Let $Z_t^q \in \bar{\Upsilon}$ be the rating state of bond $q$ at time $t$ and $Z^q = (Z_t^q)_{t \in [s^q,u^q]}$ be 
the rating process describing its evolution. The migration counting process associated with $Z^q$, which counts the number of jumps of the entity $q$ from rating $i$ to $j$, is denoted by $N^{q,ij}$ and is such that, $\forall t \in [s^q,u^q]$,
$$
\Delta N_{t}^{q,ij}=\mathbf{1}_{[Z_{t-}^q = i, Z_t^q = j]}.
$$
We introduce $\bfF^N = (\cF_t^N)_{t \in [0,T]}$ the natural filtration of the multivariate counting process $N=(N^{ij})_{i,j\in\bar{\Upsilon}}$.
We assume that two entities cannot jump at the same time, i.e, they do not have any common jumps, i.e.,  $\forall i, \ j, \ r, \ k \in \bar{\Upsilon}, \Delta N_t^{ij} \Delta N_t^{rk} =\delta_{ir}\delta_{jk}\Delta N_t^{ij}$. We also assume that there is no common jump with $\Theta$. 
With the same notation provided in the discrete-time framework, all the processes $O$ filtered  by $ \cF_t^N $ are written $\hat{O}_t = \E [O_t | \cF_t^N]$.
\\
\\
In this study, we assume that $(Z_{t}^q, t\in[0,T])_q$ are described within a factor migration model. 
More specifically, knowing $\cF_T^{\Theta}$, the rating processes $(Z^q)_q$ are assumed to be conditionally independent Markov chains with the same generator matrix.
In reality the change of rating of a bond may also induce the change of state of other bonds but this contagion effect is not considered in this paper. Moreover, the censorship mechanism governing $(s^q, u^q)$ is assumed to be non-informative and can therefore be considered deterministic and belonging to $\cF_0^N$.
Under this exchangeable setting, to infer information on the underlying hidden factor $\Theta$, it is sufficient to observe the aggregated counting processes $N^{ij}$,  $(i,j) \in \ {\Y}$,  defined by
$$
N_t^{ij} = \sum_{q;\, s^q \leq t < u^q} N_t^{q,ij}.
$$
and the exposure processes $Y^i$, $i \in  \bar{\Upsilon}$ defined by
$$
Y_t^i = \sum_{q;\, s^q \leq t < u^q}\, \mathbf{1}_{[Z_{t-}^q =i]} \,.
$$
$Y_t^i$ represents the number of observed entities with rating $i$ at time t, which may jump.
Note that the exposure process $Y^i$ is left continuous. It increases by $1$ when  $N^{ji}$  jumps for any $j \neq i$, or when a new bond enters the pool with rating $i$. It decreases by $1$ when a bound jumps outside rating $i$, i.e., whenever $N^{ij}$ jumps for $j\neq i$ or when a bond expires with rating $i$.
\\
\\
This framework aims to determine the recursive equation satisfied by $\hat{I}_{n}^{h}=\E[\mathds{1}_{[\Theta_{n}=h]}|\cF_{n}^N]$.
We denote by $(\nu_t^{ij})_{(i,j) \in {\Y}}$ the $\bfF$ intensity of $N$.
We assume that the intensities $(\nu^{ij})_{(i,j) \in {\Y}}$ are governed by the finite state hidden Markov chain $\Theta$. 
With these assumptions, the processes $(Z^q)_q$ are governed by their common intensity matrices $(l^h)_{h \in \T}$, such as for small enough $dt$
\begin{equation}
\label{L-to-l}
\Prob [Z_{t+dt}^q = j\,|\, Z_t^q = i,\Theta_t = h ] \approx \ell^{h,ij}\, dt.
\end{equation}
Then the counting processes $N^{ij}$ are governed by the $\bfF$ intensities
$$
\nu_t^{ij} = Y_t^i \, \sum_{r \in \T} \ell^{r,ij}\, I_{t-}^r \,.
$$
As $Y^{i}$ is $\bfF^N$--predictable, the $\bfF^N$--intensity of $N^{ij}$, may be written as
\begin{equation}
\label{sigmaFil}
\hat{\nu}_{t-}^{ij} = Y_t^i \, \sum_{r\in \T} \ell^{r,ij}\, \hat{I}_{t-}^r \,.
\end{equation}
We present the multivariate filtering formula satisfied by the process $(I_{t}^h)_{t\in[0,T]}$ in the following proposition.  
\begin{proposition}
\label{prop:continuous-rating}
\textit{
With the previous assumptions, the unobserved indicator process  filtered with rating jumps satisfies the following recursive equation}
\begin{equation}
d\hat{I}_{t}^{h}=\sum_{r=1}^{m}k^{rh}\hat{I}_{t-}^{r}dt+\sum_{i\neq j}\left(\frac{l^{h,ij}\hat{I}_{t-}^{h}}{\sum_{r}l^{r,ij}\hat{I}_{t-}^{r}}-\hat{I}_{t-}^{h}\right)\left(dN_{t}^{ij}-Y_{t}^{i}\sum_{r=1}^{m}l^{r,ij}\hat{I}_{t-}^{r}dt\right)
\label{eq:continuous-filter-rating}
\end{equation}
\end{proposition}
\begin{proof}
This result stems from the general continuous-time filtering theory developed in \cite[Sec.IV.1]{Bremaud1981}. Compared to the setting of \cite{Bremaud1981}, two adaptations are necessary. First, we deal here with an aggregated multivariate process over the entire portfolio and secondly, we take censorship into account though the processes of risk exposure $Y^i$. For the sake of completeness, we provide in Appendix \ref{ap:continuous-filter} a self-consistent proof yielding a general explicit filtering formula with no simultaneous jumps. Then, we provide in Appendix \ref{sec:rating-continuous-demo}, the adequate adaptations for the rating migration context to obtain the filtering equation (\ref{eq:continuous-filter-rating}).
\end{proof}
Usually, information on rating migrations are only available to public on a daily basis. For large credit portfolios, it is then frequent to observe multiple transitions (of several entities) occurring at the same day. In addition, clustering of rating migrations may also happen following the disclosure of a major economic events. Then, the presented continuous-time filtering approach is not fully compliant with migration data since it precludes simultaneous jumps between counting processes. 
This has lead us to preprocess the data and adapt the calibration algorithm.

\subsection{Adaptation of the continuous-time setting to discrete migration data}
\label{seq: continuous-adaptation}
 This part aims to explain how to adapt continuous filtering to discrete rating migration framework. We propose an adaptation of the calibration algorithm in order to be compliant with the continuous filtering formula (\ref{eq:continuous-filter-rating}). Previous adaptations done for the discrete framework are still required but are not sufficient: Baum-Welch algorithm is an estimation in discrete time which is not compliant with the continuous version of the filter. Furthermore the continuous-time filtering framework assumes the absence of simultaneous jumps.
 \\
For the first deviation, we propose to calibrate discrete-time parameters. Then it is necessary to switch to continuous time dimension for filtering. The transition between probabilities to intensities turns out to be easy when the time interval chosen is small enough according to (\ref{K-to-k}) and (\ref{L-to-l}). 
\\
The second deviation is also essential. There is no common jumps among the rating processes. 
However, ratings are not natural processes. Human decisions and algorithm appreciations are reported at the same moment in a day. Therefore we have to deal with simultaneous daily observations. Our solution consists in considering a different time grid. Each day is cut into small intervals and jumps are randomly spread on these time intervals. We insure to cut enough finely to have a maximum of one jump per interval. This manipulation has two drawbacks. First, the conditional independence between rating entities might be lost if the non-simultaneity of jumps is enforced. Then, distributing simultaneous jumps on a finer time grid may ultimately modify the original information.
\\
The second effect is studied in a testing benchmark at Section \ref{ContinuousFictive}.
\\
Once the data have been modified, we adapt our calibration algorithm to respect continuous structure. To this end, we use a prior law of jumps which respects the constraint of no simultaneity (that the number of observed jumps is only $0$ or $1$). 
The main idea consists on assuming that one entity is randomly chosen to be allowed to jump. Then, the entity may jump according the common migration matrices $\{(L^{h,ij})_{h}$, $ (i,j) \in \bar{\Upsilon}\}$.
\\
\\
More details of this adaptation can be found in Appendix \ref{proof:calibration-continuousBaum}.

\section{Filtering on simulated data}
\label{sec:fictiveData}
The purpose of this section is to test and validate the continuous-time and discrete-time versions of the filter using simulated data: we build two rating migration databases from the two underlying credit migration models: the discrete-time model (as described in Section \ref{sec:rating-discrete}) and the continuous-time factor migration model (as described in Section \ref{sec:rating-continuous}). Since the structure of the two models are different, different data sample are used for testing the two approaches. The inputs of the filtering models are the evolution of the number of transitions, described by $\{\Delta N_n^{ir}, \ n\in\{1,\ldots,\Gamma\},\ i, \ r \in \Upsilon\}$. 
Our testing framework aims to compare the filtered trajectory of the hidden factor, $(\hat{\Theta}_n)_{n\in\{1,\ldots,\Gamma\}}$ and the real simulated one $({\Theta}_n)_{n\in\{1,\ldots,\Gamma\}}$.
Then it compares estimated point-in-time transition probabilities to real observed transition rates. The real observed rate of a transition is the ratio between the number of observed jumps during a time interval and the number of entities at the beginning of this time interval, susceptible to jump.
Estimations of transition probabilities are respectively given by $\{\hat{\nu}_n^{ir}, \ n\in \{1,\ldots,\Gamma\},\ i, \ r \in \Upsilon\}$, in (\ref{prediction}) for the discrete-time setting and $\{\hat{\nu}_n^{ir}\diff t, \ t\in [0,T],\ (i,r) \in {\Y}\}$, in (\ref{sigmaFil}) for the continuous-time setting. The value of $\diff t$ is chosen equal to $\frac{1}{1000}$, $1000$ being the number of small intervals in which we have cut each day (see Section \ref{seq: continuous-adaptation}). 
To calibrate the models, the EM algorithm of Section \ref{sec:calibration} is run 1000 times by sampling random initial values (see Section \ref{sec:init}). We keep the solution, which provides the highest likelihood estimation.

\subsection{Discrete time filtering approach}
\label{seq:fictive-discrete-filtering}
To build the discrete-time database, we assume that the hidden factor is described by a finite state space Markov chain with $7$ states. We consider a given set of model parameters. Each state of the hidden factor is associated with a specific rating migration matrix. 
We try to choose matrices compliant with filtering: we must have sufficient variability among conditional transition probabilities $(L^{h,ij})_{h}$, $ (i,j) \in \{1,..,p\}$. We work with 3 ratings categories $\{A, B, C\}$ and initialize our sample with 1000 entities per rating. 
Then the hidden Markov chain is simulated on 300 time steps. According to the hidden factor's sample path, we simulate transitions using conditional transition matrices. We use the first 200 time-steps to calibrate the model and the remainder to test is. 
\\
\\
We perform the calibration as described in Section \ref{sec:calibration}. Parameters chosen for sampling the model and estimated parameters are presented in Appendix \ref{ap:parameters}. The average difference is equal to 0,0014 (9,8\% of average relative error on superior and inferior diagonals) for the rating transitions $\{(L^{h,ij})_{h}$, $ (i,j) \in \{1,..,p\}\}$ and equal to 0,0159 (5,5\% of average of relative error) for the hidden factor's transition probabilities $\{K^{rh}, \ r, \ h \in\T\}$. Despite the high dimentionality of the problem, these indicators demonstrate that the estimation algorithm is able to recover the parameters.
\\
Figure \ref{fig} shows the real trajectory and the filtered trajectory of the hidden factor computed on the testing sample (of 100 time-steps). 
\begin{figure}[H]
   \centering
  \includegraphics[scale=0.7]{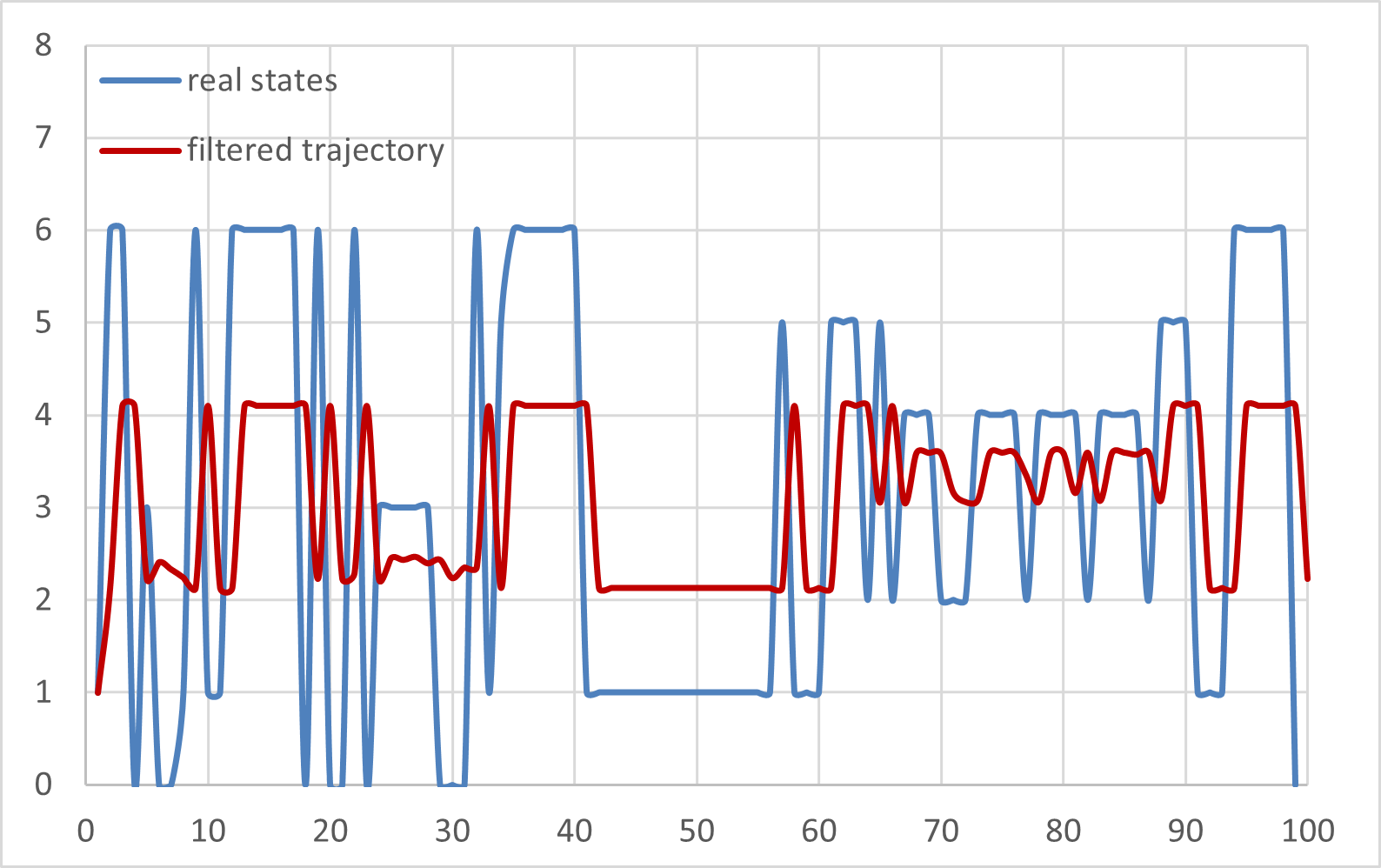}
   \caption{Simulated and filtered trajectories of the hidden factor on the last 100 testing dates}
  \label{fig}
 \end{figure}
We can notice that the filter is able to detect the changes of states. It faithfully follows the real trajectory and respects the different phases and trends. However it does not exactly mimic the true value, since the filtering formula is a weighted average of the state values. We can also observe a small delay in the estimation. 
The explanation is theoretical: it is caused by the effect of delay in the filtering model: 
the impact of the hidden factor at time $t-1$ is observable on ratings at time $t$. Therefore, when the hidden factor at time $t$ is filtered, the freshest observations available at this time, is the rating jumps at time $t$ which have been governed by the hidden factor at time $t-1$. Consequently we infer the current hidden factor state with information generated by its previous value.
\\
We can easily understand that the calibration plays a crucial role to make the filtering efficient. In order to forecast in time, states need to be strongly linked at least to another state. Let's imagine a rare and very unstable state. Since it is hardly visited from other states, it will never influence the direction of the filter and will be difficult to predict. Once the filter realizes that the hidden factor jumps to this state, it is too late, the hidden factor has already returned to another state. Finally, the filter is unable to capture rare events to unstable states. This remains acceptable since our main purpose is to detect transitions to stable regimes. Visiting a state for a brief period of time does not represent useful information for long term forecasting. 
\\
The following Figures \ref{fig:FictiveDiscreetRatios01}, \ref{fig:FictiveDiscreetRatios02}, \ref{fig:FictiveDiscreetRatios10}, \ref{fig:FictiveDiscreetRatios12}, \ref{fig:FictiveDiscreetRatios20}, \ref{fig:FictiveDiscreetRatios21}, represent real ratios of observed transitions with the predicted transitions dynamics, obtained from (\ref{eq:multivariate-discreet-credit}) and (\ref{prediction}), between the three considered rating categories $\{A,B,C\}$.
\\
\begin{figure}[H]
   \begin{minipage}[c]{0.46\linewidth} 
      \includegraphics[scale=0.5]{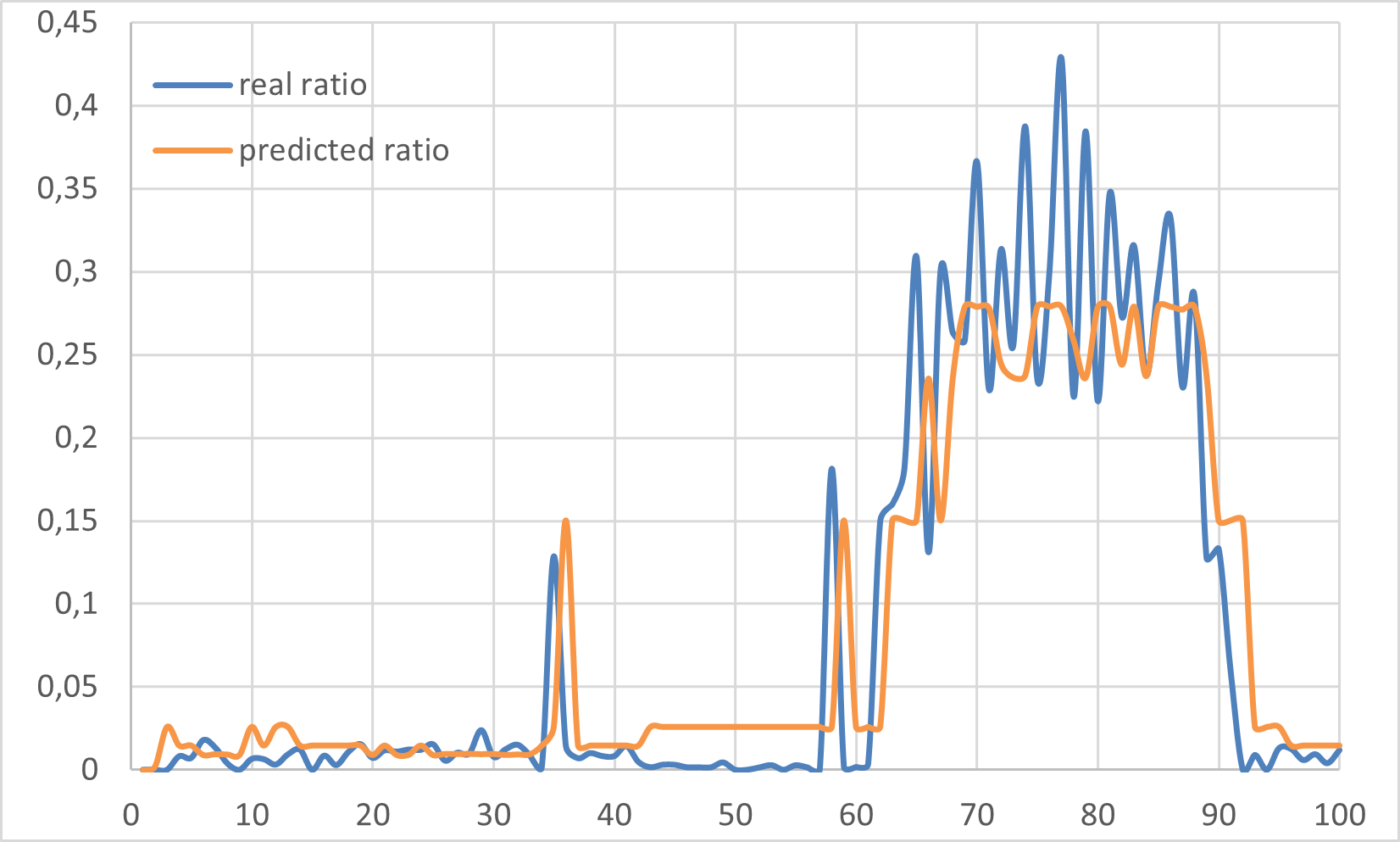} 
      \caption{Real and predicted ratios from A to B}
      \label{fig:FictiveDiscreetRatios01}
   \end{minipage} \hfill 
   \begin{minipage}[c]{0.46\linewidth} 
      \includegraphics[scale=0.5]{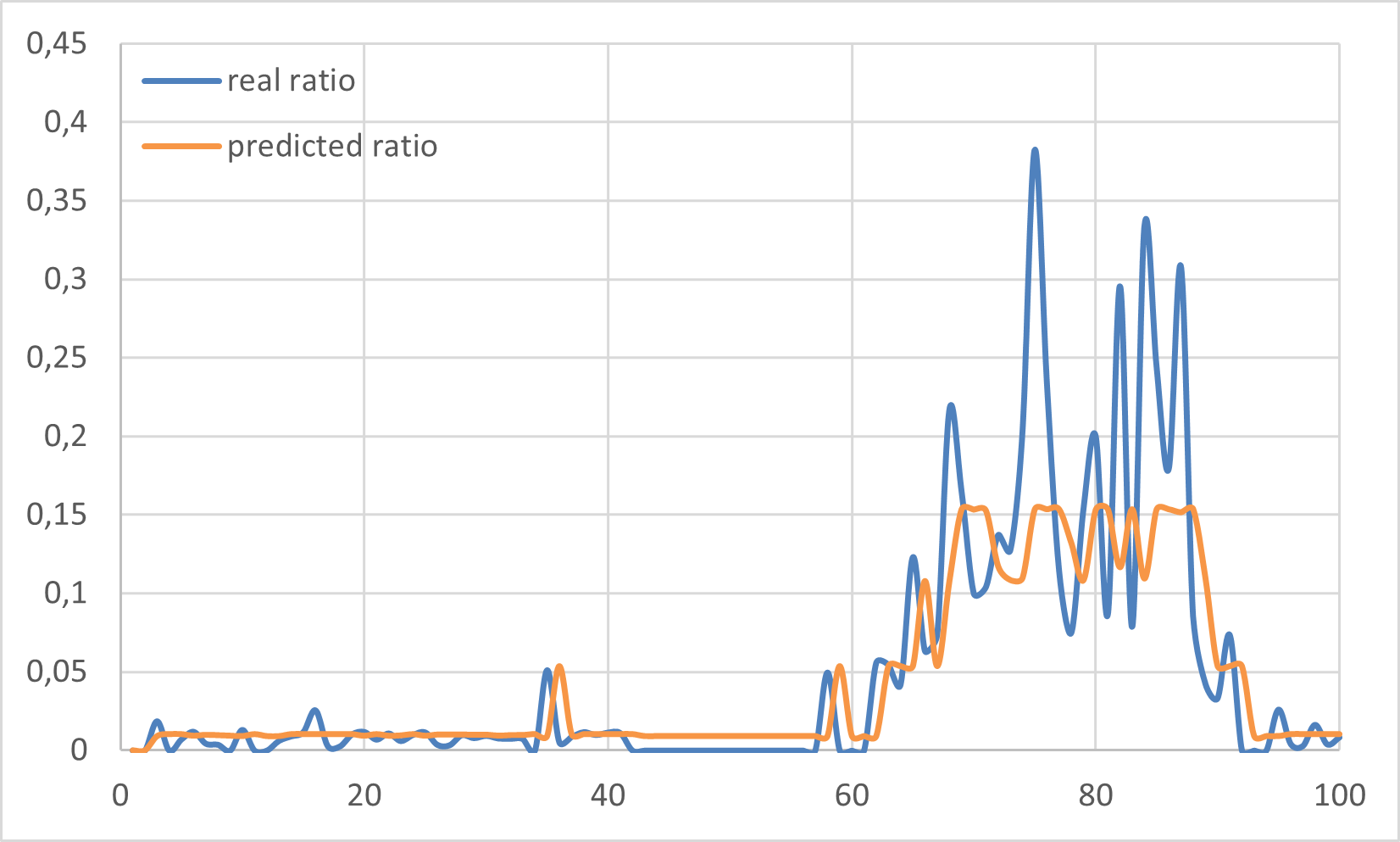} \caption{Real and predicted ratios from A to C}
      \label{fig:FictiveDiscreetRatios02}
   \end{minipage}

   \begin{minipage}[c]{0.46\linewidth} 
      \includegraphics[scale=0.5]{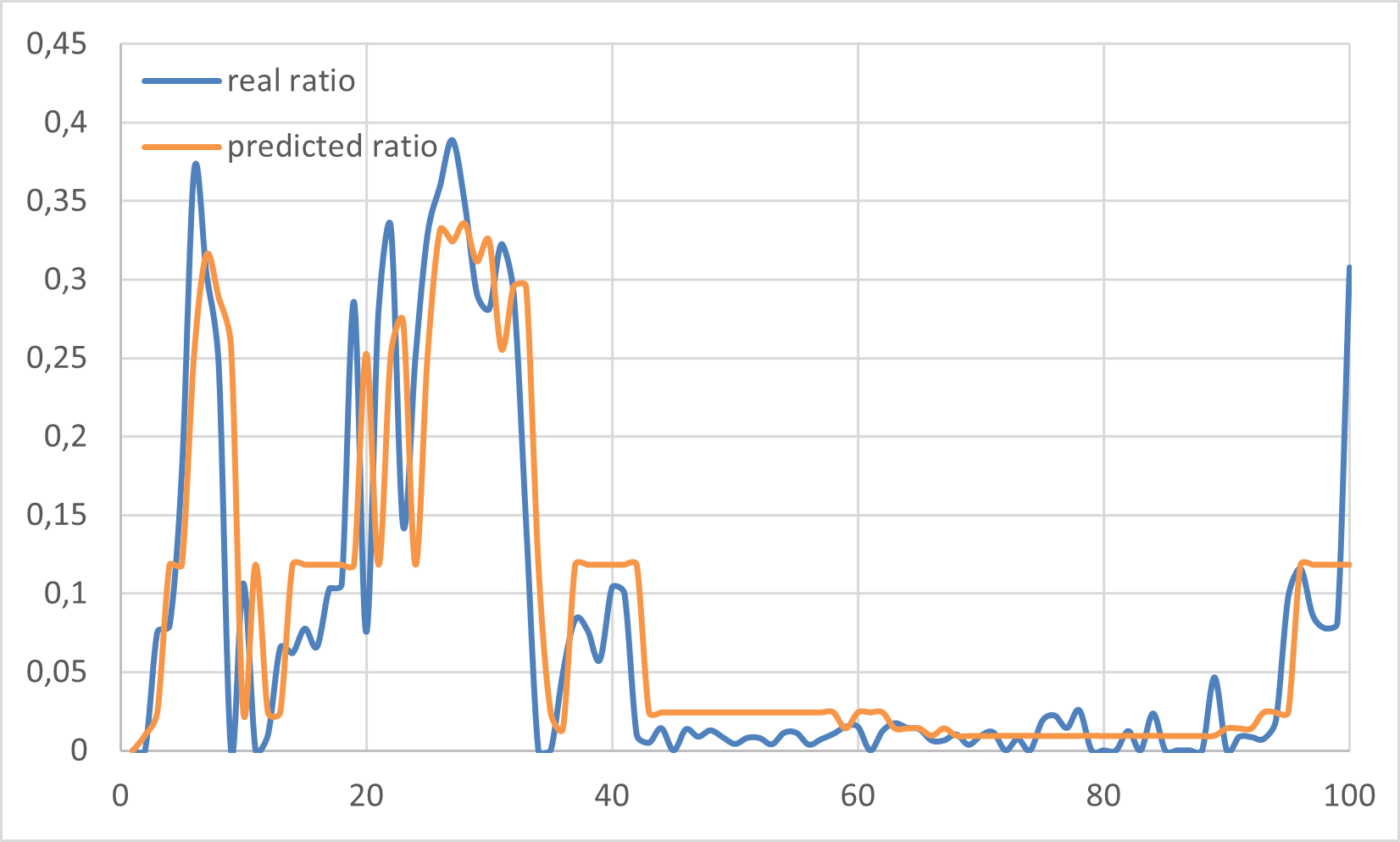} \caption{Real and predicted ratios from B to A}
      \label{fig:FictiveDiscreetRatios10}
   \end{minipage} \hfill 
   \begin{minipage}[c]{0.46\linewidth} 
      \includegraphics[scale=0.5]{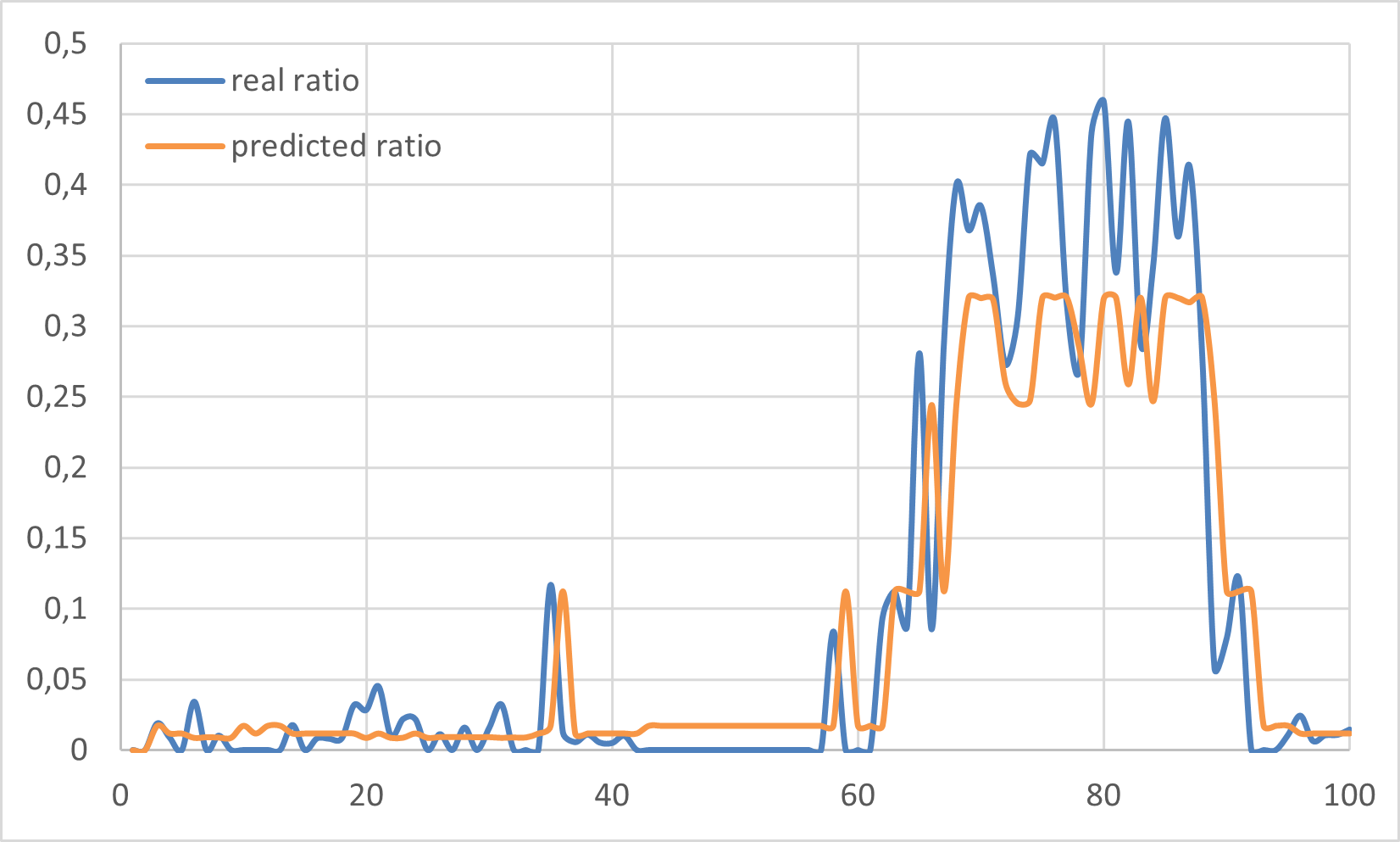} \caption{Real and predicted ratios from B to C}
      \label{fig:FictiveDiscreetRatios12}
   \end{minipage} 
   
   \begin{minipage}[c]{0.46\linewidth} 
      \includegraphics[scale=0.5]{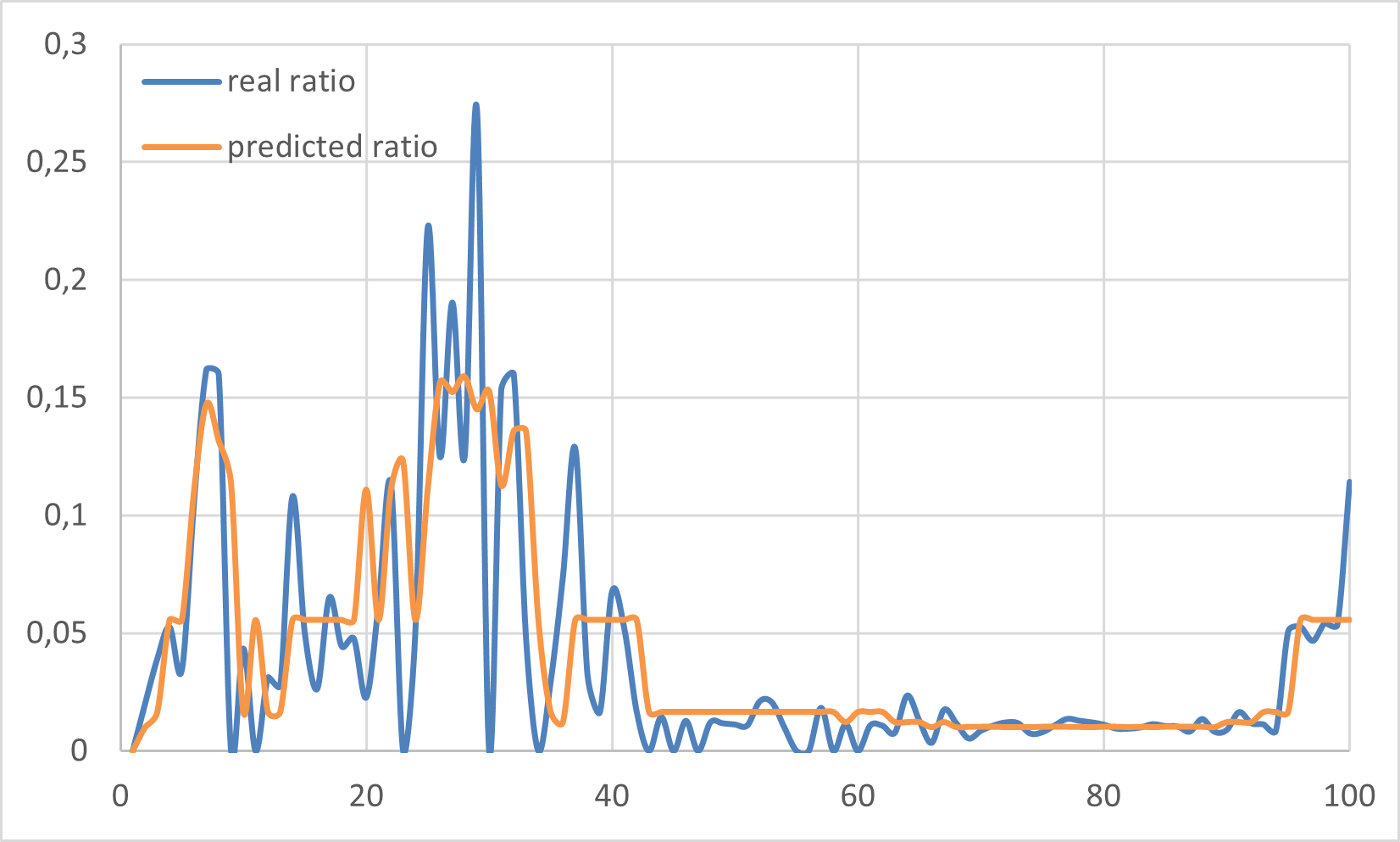} \caption{Real and predicted ratios from C to A}
      \label{fig:FictiveDiscreetRatios20}
   \end{minipage} \hfill 
   \begin{minipage}[c]{0.46\linewidth} 
      \includegraphics[scale=0.5]{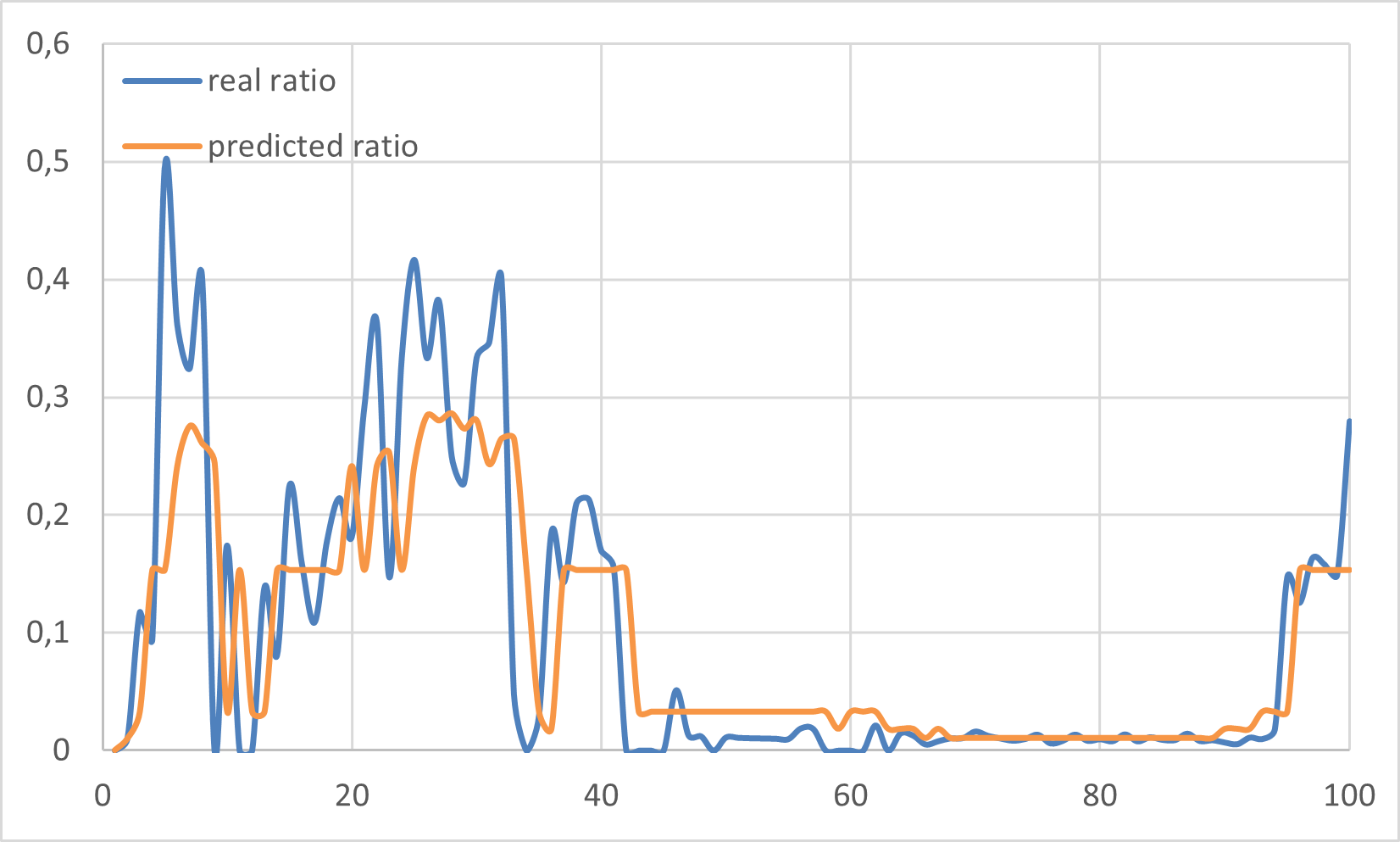} \caption{Real and predicted ratios from C to B}
     \label{fig:FictiveDiscreetRatios21}
   \end{minipage} 
\end{figure}
The results are very encouraging. The filter provides good predictions of future jumps. The predictions vary as a function of the regime cycle. Even when the real ratios sharply increase or decrease, the prediction are immediately corrected. 
\\
Although, one of our previous intuitions is confirmed, the filtering approach can not capture extreme variations since our approach forecasts an average of the rating transition probabilities.  

\subsection{Continuous framework}
\label{ContinuousFictive}
In order to validate the continuous-time filtering approach, we generate a data set using the migration model described in Section \ref{sec:rating-continuous}. The simulated rating processes exhibit no simultaneous jumps. We consider 3 rating categories and 1000 entities per class. 
We build a continuous Markov chain with 5 states. Note that we reduce the number of states compared to the discrete-time framework as the continuous framework is much more computationally demanding. Since the data is fictive and specific to the continuous-time model, this choice has no impact on our validation experiment.
We directly applied the continuous-time filtering approach on the simulated data set, which does not contain simultaneous jumps. Then, in order to challenge the relevance of the use of the continuous model on discrete data, we transform the data set. Jumps are aggregated and randomly spread before filtering as described in Section \ref{seq: continuous-adaptation}. We apply the continuous-time filtering approach and compare the two predicted ratios dynamics. This comparison highlights the effect of the random re-distribution of jumps. 
\\
Figures \ref{fig:without redistribution} and \ref{fig:redistribution} show the dynamics of the proportion of transitions predicted against the real observed ratios from rating A to rating B, respectively without and with redistribution.
\begin{figure}[H]
   \begin{minipage}[c]{0.46\linewidth} 
      \includegraphics[scale=0.5]{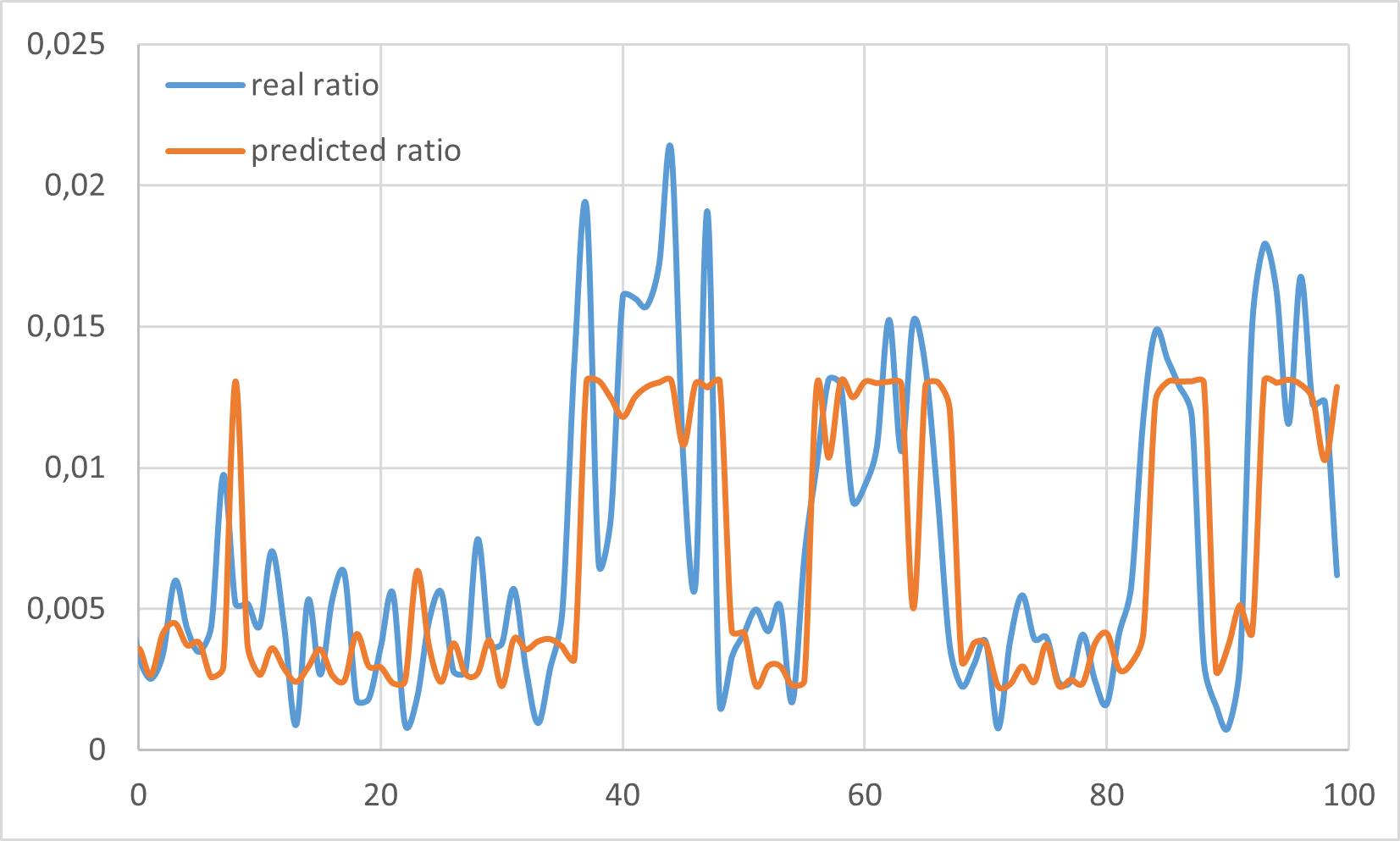} 
      \caption{Real and predicted ratios from A to B without redistribution of common jumps}
      \label{fig:without redistribution}
   \end{minipage} \hfill 
   \begin{minipage}[c]{0.46\linewidth} 
      \includegraphics[scale=0.5]{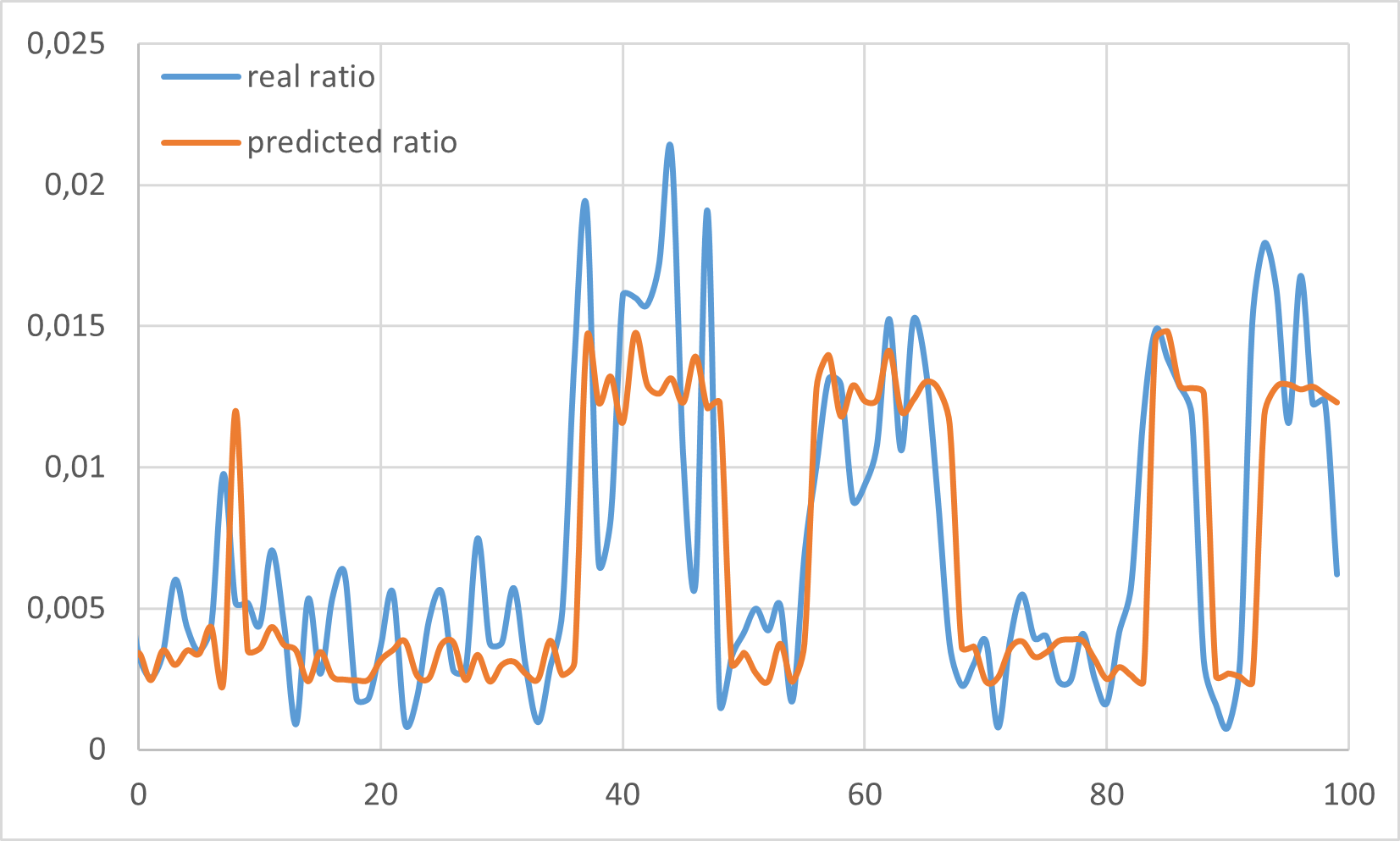} \caption{Real and predicted ratios from A to B with redistribution of common jumps}
      \label{fig:redistribution}
   \end{minipage} 
\end{figure}
The predicted ratios dynamics in Figure \ref{fig:without redistribution}, validate the use of the continuous-time filtering approach: the predicted ratios follow the real trajectory of ratios.
\\
By comparing with Figure \ref{fig:redistribution}, we deduce that spreading information (to avoid the simultaneity of jumps) does not alter the predictions.
Thanks to this comparison exercise, we can apply continuous framework to real data without concern that the results are altered by this action. 
\\
Even if the data samples used are different, we can notice that the changes in both predicted ratios dynamics are less brutal than in the discrete-time filtering framework applied in Section \ref{seq:fictive-discrete-filtering}. The continuous filter is updated with progressive information (due to the absence of simultaneous jumps) and is more flexible than the discrete filter to anticipate regime changes. Assimilating jumps one by one, seems to improve the quality of predictions.
Nevertheless, the effect of delay is still observable.
\section{Application on real data}
\label{sec:realData}
This section compares the results of our different models on a real rating database. We consider two discrete-time versions of the filter (one univariate and one multivariate) and a continuous-time multivariate alternative approach.
\subsection{Data Description}
\label{dataDescription}
Credit ratings are forward-looking opinions about the creditworthiness of an obligor with respect to a specific financial obligation.
We build a transitions rating database from Moody's credit rating disclosure. We only use aggregated data (number of transitions). The considered sample contains 7791 days from January 2000 to May 2021. We study the evolution of Long Term ratings of 5030 corporate entities during this period without sector consideration. For specific experiments (analyses, validation, comparison), we consider the whole sample to calibrate the models. For others, such as testing the predictive power of model, we proceed to a cross validation.
We choose a 5 states hidden factor for each experiment.
\\
Moody's rating system relates 21 ratings categories. Keeping this granularity means estimating more than 420 transitions. Therefore, many studies (\cite{elliott2014double}, \cite{koopman2008multi}) reduce the number of rating categories. In the same way, we decide to aggregate the 22 ratings to 6 : A, Baa, Ba, B, C and W. An obligation is rated W when it has no rating. We will also rate W the entity whose rating is not observed. This happens when the data is missing, censored or when it is not appeared yet. There exists many ways to manage not rated status (W). It can be considered as bad information, good information, no information for the credit or not considering them at all. According to \cite{Carty1997}, only few (roughly 13 percent) of the migration to the not rated category are related to changes in credit quality. This argument motivated \cite{nickell2000stability} to use the last method, consisting in removing from the sample all the entities that experiences a not rated status. But this approach is dubious in regard of the loss of information. In this study, we will consider no rated status as censorship. This is achieved by progressively eliminating companies whose rating is not known or withdrawn and adding them when a new rating is provided.
\\
A reference time-step is chosen for each experiment. The daily data are aggregated in order to observe and to predict rating transitions on a larger time window.
\\
\\
\subsection{Discrete-time filtering in sample}

In order to observe and interpret the effect of the discrete-time framework on a real credit rating database, we present in this section, the main results of a univariate and a multivariate filters, calibrated on the whole period.
\subsubsection{Univariate discrete-time filtering}
\label{seq:univariate-results}
In this part, we assume that each transition is governed by its own hidden factor. Under this assumption, each transition evolves according to the evolution of its own latent factor, independently from the others.
This modeling is meaningful to integrate rating specificities in the predictions.
\\
On the data set described above, we focus on a single transition: from rating B to C. We choose this transition because it could be identified as ``transition to default'' and witness of crisis. This will entail the use and calibration of the univariate form of the discrete-time filter (\ref{eq:discreet-univariate-rating}).
\\
A first step consists in calibrating the models with the past history of the involved transition. The reference time step, at stake in every transition, is 30 days. We highlight the efficiency of our approach without cross validation: all past transition history available (from January 2000 to May 2021) is used to calibrate the model.
\\
We obtain in Table \ref{tab:DiscreetTheta}, the calibrated 30 days transition matrix of the hidden factor $\Theta$. Table \ref{tab:DiscreetL} presents the conditional transition probabilities from rating B to C in each state.

\begin{table}[H]
    \centering

    \footnotesize
    \caption{$\Theta$'s transition matrix}
    \begin{tabular}{|*{7}{c|}}
        \hhline{~*{5}{-}}
        \multicolumn{1}{c|}{} & $\Theta=0$ & $\Theta=1$ & $\Theta=2$ & $\Theta=3$ & $\Theta=4$   \\ \hline
        $\Theta=0$ & {0.90598} & {0.074109} & {0.018316} & {0} & {0.001595}    \\ \hline
        $\Theta=1$ & {0.230415} & {0.715919} & {0.040626} & {0} & {0.013040}     \\ \hline
        $\Theta=2$ & {0.000304} & {0.381375} & {0.540412} & {0.077909} & {0}     \\ \hline
        $\Theta=3$ & {0} & {0} & {0.740452} & {0.259548} & {0}     \\ \hline
        $\Theta=4$ & {0.491597} & {0} & {0} & {0.508403} & {0}    \\ \hline

    \end{tabular}
     \label{tab:DiscreetTheta}
\end{table}

\begin{table}[H]
    \centering
    \footnotesize
     \caption{30 days transition probabilities from B to C}
    \begin{tabular}{|c|c|c|c|c|c|}
        \hhline{~*{5}{-}}
        \multicolumn{1}{c|}{} & $\Theta=0$ & $\Theta=1$ & $\Theta=2$ & $\Theta=3$ & $\Theta=4$  \\ \hline
        $B \rightarrow C$ & {0.001814} & {0.0050001} & {0.0158818} & {0.0451715}
 & {0.085771}
   \\ \hline
    \end{tabular}
    \label{tab:DiscreetL}
\end{table}
Table \ref{tab:DiscreetTheta} highlights two stable states, 0 and 1 and an unstable and rare state, state 4. By analysing Table \ref{tab:DiscreetL}, we notice a hierarchy of risk between the states of $\Theta$. State 4 is clearly identified as the riskiest state with a downgrade probability fifty time greater than in state 0, the most favourable state. State 3 is also a state of crisis which is more stable. State 2 can be interpreted as an intermediate state between favourable and unfavourable situation. Consequently we can expect that the economy often remains in a calm and favourable situation and experiences sometimes brief transitions to stressed states when downgrade probability B to C increases a lot.
\\
Figure \ref{fig:univTheta} presents the filtered indicator function trajectories of the own hidden factor of the transition B to C, $\{\hat{I}_n^h, n\in\{0,\ldots,\Gamma\},\ h\in\T \}$, without cross validation. Figure \ref{fig:univRatios} shows the dynamics of 30 days forecasted ratios from rating B to C, $\{\hat{\nu}_{n-1}^{BC}, \ n \in \{1,\ldots,\Gamma\}\}$, given in (\ref{prediction}).
\begin{figure}[H]
   \begin{minipage}[c]{0.46\linewidth}
      \includegraphics[scale=0.5]{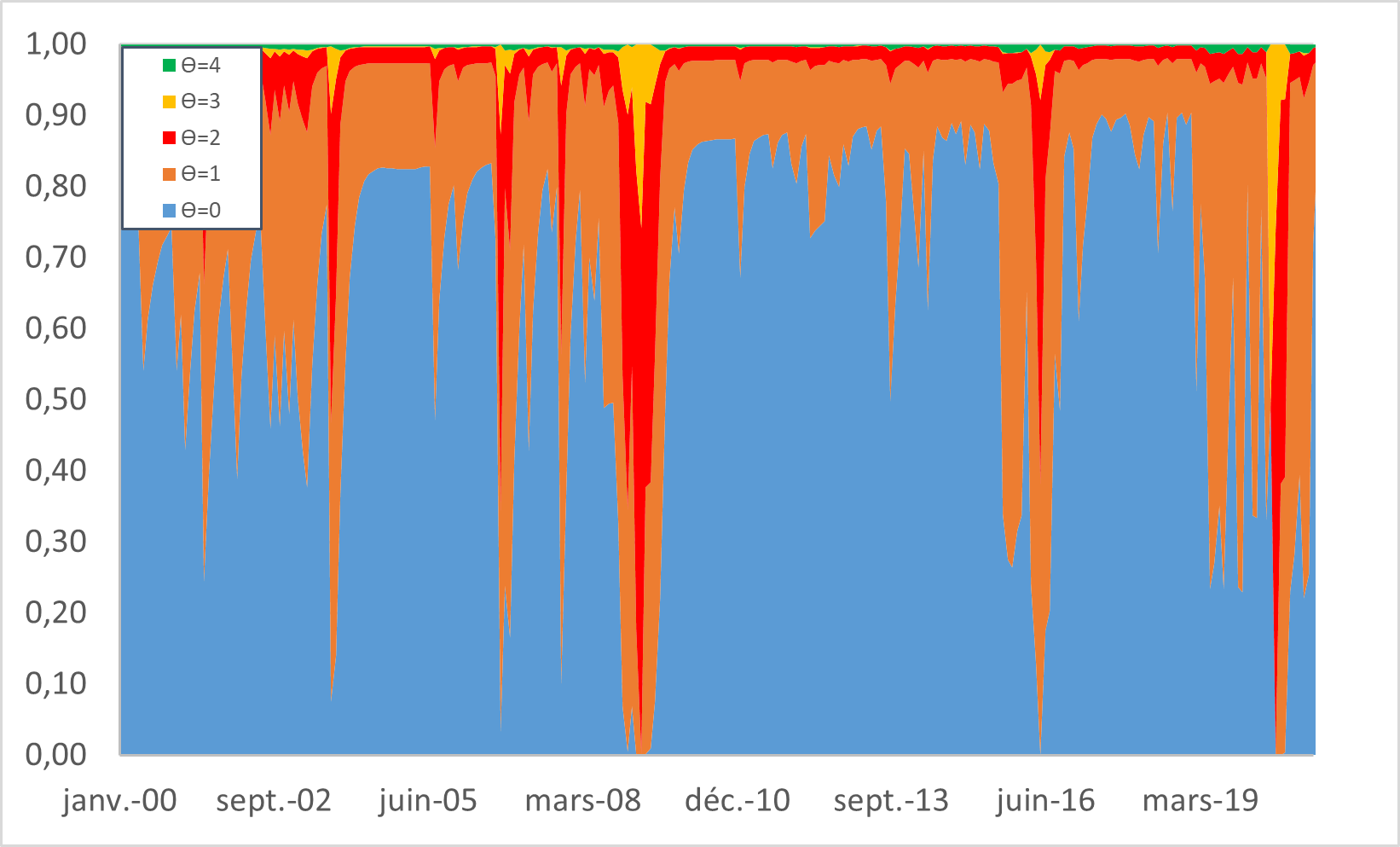}
      \caption{Filtered trajectories of the hidden factor indicator functions}
      \label{fig:univTheta}
   \end{minipage} \hfill
   \begin{minipage}[c]{0.46\linewidth}
      \includegraphics[scale=0.5]{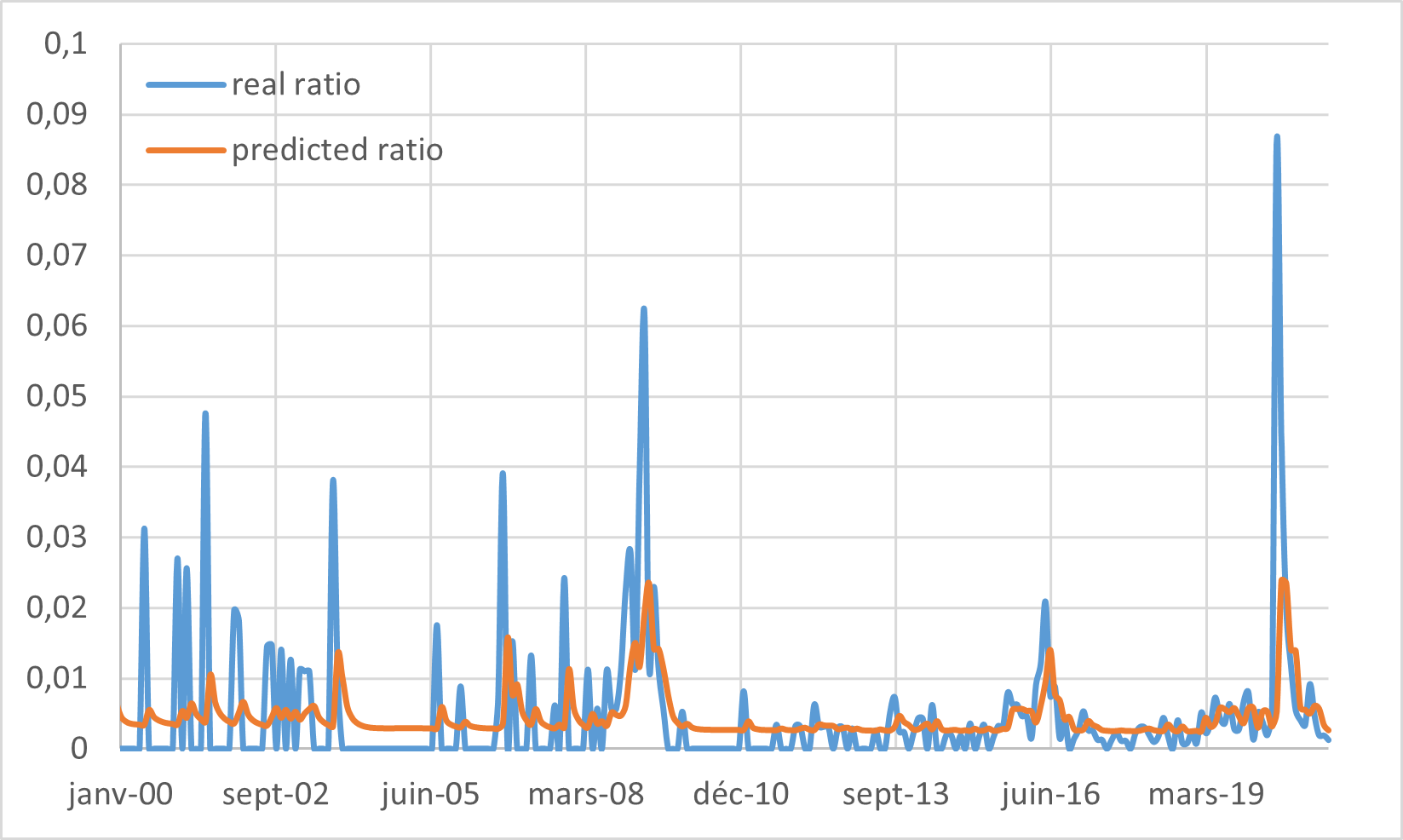} \caption{Real and predicted ratios for transition B to C}
    \label{fig:univRatios}
   \end{minipage}
\end{figure}
Figure \ref{fig:univTheta} shows that the dominant state changes across time and highlights regime switching. Our intuitions are confirmed, the filter is often close to favorable states 0 and 1. The dominant state is sometimes, for a brief moment, state 2, an intermediate state, where the downgrade probability from B to C increases. After periods when state 2 is dominant, the filter sometimes indicates that a state of true crisis, state 3, becomes dominant. Transitions from periods where state 0 or 1 are dominant to periods where state 4 is dominant may be sudden but remain rare. Fortunately this state of extreme ``crisis'' is only dominant for very brief periods. By analyzing Figure \ref{fig:univRatios}, it can be noted that the predicted ratios from B to C reflect the general trend of real ratios with the same "lag" effect observed than on fictive data. The filter is able to detect regimes and transition phases but cannot capture brutal and short transitions. Finally the filter infers that the economic cycle experiences long periods of favorable situations and brief transitions to stress states.
\\
Note that the hidden factor is specific to the involved transition. It may cover systematic risk but also the risk which might be specific to the ratings at stake.
\\
We now consider the multivariate case where the hidden factor is shared by several transitions.

\subsubsection{Multivariate discrete-time filtering}
Using multiple transitions to infer the hidden factor assumes that the later is shared by those transitions. This approach should bring more information to forecast the dynamics of these transitions but presents several difficulties. The calibration algorithm finds centroids in the parameters space which might be far from each other due to the high dimension of the parameters space. Consequently the predicted number of transitions may be very different from the realized one. Furthermore rating transition events may not be sufficiently correlated. Indeed certain transitions are weakly correlated and might bring noise. We must only consider the most correlated transitions to extract the global factor dynamics. Therefore we decide to only focus on adjacent downgrade transitions (the upper diagonal). Indeed empirical results from \cite{Reda2015} show that the upgrades are more subject to idiosyncratic shocks than downgrades. To remove the impact of the remaining transitions on the model, we assign them the same probability for each state of the hidden factor: we use the time-homogeneous intensity estimators to compute these probabilities (see, e.g., \cite{Reda2015}, \cite{duffie2007multi}, \cite{jarrow1997markov}, \cite{koopman2008multi}, \cite{lando2002analyzing}). Consequently we reduce the number of transitions to calibrate to four.
\\
We achieve two experiments. First we consider a time step reference of 30 days. We calibrate on whole period of the data set to observe the behaviour of the multivariate model. Then, along a second experiment, we will proceed to a cross validation to faithfully assess the predictive power of the model. For this experiment which is computationally more expensive, we will choose a larger time window, with a time step of 50 days.
\\
\\
For the first experiment, as in Section \ref{seq:univariate-results}, we again consider 5 states for the hidden factor, a time step of 30 days and we do not proceed to cross validation.
\\
Table \ref{tab:multDiscreetbackTheta} gives the calibrated transition matrix of the hidden factor. Table \ref{tab:downgrade} presents the conditional downgrade probabilities for a time step of 30 days.
\begin{table}[H]
    \centering
    \footnotesize
    \caption{$\Theta$'s transition matrix}
    \begin{tabular}{|*{7}{c|}}
        \hhline{~*{5}{-}}
        \multicolumn{1}{c|}{} & $\Theta=0$ & $\Theta=1$ & $\Theta=2$ & $\Theta=3$ & $\Theta=4$   \\ \hline
        $\Theta=0$ & {0.9499} & {0.0418} & {0.0010} & {0} & {0.0073}    \\ \hline
        $\Theta=1$ & {0.1075} & {0.7661} & {0.1264} & {0} & {0}     \\ \hline
        $\Theta=2$ & {0.0004} & {0.2685} & {0.6340} & {0.0503} & {0.0469}     \\ \hline
        $\Theta=3$ & {0} & {0} & {0.5133} & {0.4867} & {0}     \\ \hline
        $\Theta=4$ & {0} & {0} & {1} & {0} & {0}    \\ \hline

    \end{tabular}
    \label{tab:multDiscreetbackTheta}
\end{table}

\begin{table}[H]
    \centering
    \caption{Adjacent 30 days downgrade probabilities}
    \begin{tabular}{|*{7}{c|}}
        \hhline{~*{5}{-}}
        \multicolumn{1}{c|}{} & $\Theta=0$ & $\Theta=1$ & $\Theta=2$ & $\Theta=3$ & $\Theta=4$   \\ \hline
        $A \rightarrow Baa$ & {0.00297589} & {0.00224944} & {0.00838262} & {0.00801885} & {0.0194804}    \\ \hline
        $Baa \rightarrow Ba$ & {0.00125687} & {0.00146192} & {0.00492593} & {0.00985172} & {0.031583}     \\ \hline
        $Ba \rightarrow B$ & {0.00326413} & {0.00633207} & {0.0150595} & {0.0282736} & {0.0228716}     \\ \hline
        $B \rightarrow C$ & {0.00189228} & {0.00492691} & {0.0128149} & {0.0641203} & {0.0114155}     \\ \hline

    \end{tabular}
     \label{tab:downgrade}
    \footnotesize
\end{table}
By analysing the tables, it is noteworthy that states 0 and 1 are stable states which induce a ``favourable'' situation, where downgrade probabilities are quite low. States 3 and 4 can be interpreted as a stressed economy, where downgrade probabilities are higher. Note that state 4 is totally unstable and transitory. The transition between favourable periods (state 0 and 1) and stable stressed periods (state 3) is exclusively achieved through state 2.
\\
Figure \ref{fig:multDiscreetTheta} shows the filtered trajectories of state probabilities according to (\ref{eq:multivariate-discreet-credit}). Figure \ref{multDiscreetRatio} presents the dynamics of the predicted ratio from rating B to C, within a multivariate framework, without cross validation. We focus on transition B to C to compare with Section \ref{seq:univariate-results}.
\begin{figure}[H]
   \begin{minipage}[c]{0.46\linewidth}
      \includegraphics[scale=0.5]{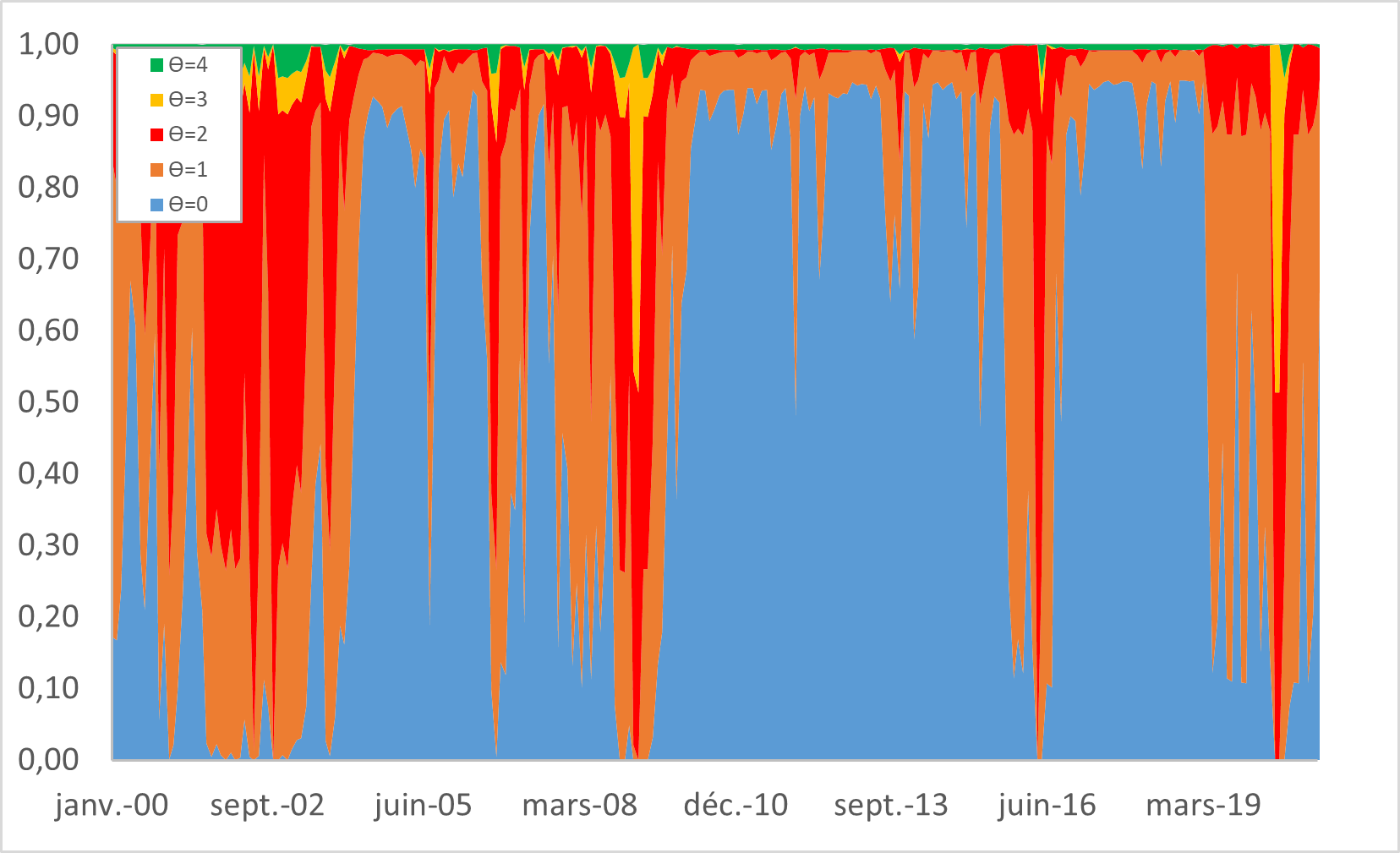}
      \caption{Filtered trajectories of hidden factor indicator functions}
      \label{fig:multDiscreetTheta}
   \end{minipage} \hfill
   \begin{minipage}[c]{0.46\linewidth}
      \includegraphics[scale=0.5]{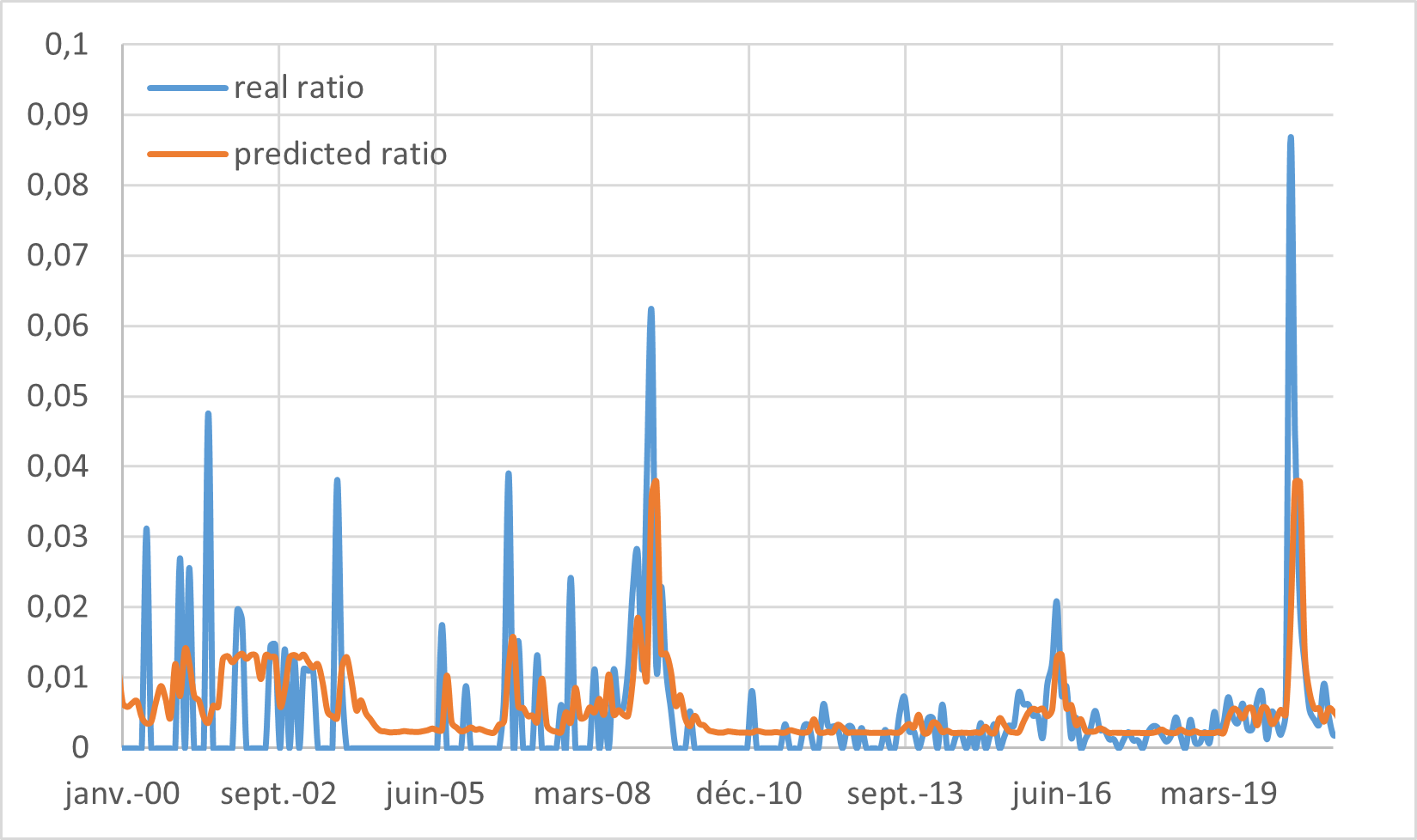} \caption{Real and predicted ratios for 30 days transition from B to C}
      \label{multDiscreetRatio}
   \end{minipage}
 \end{figure}
Figure \ref{fig:multDiscreetTheta} brings us new information on the evolution of the predicted hidden state. Periods of crisis when state 3 and 4 dominant, are pretty rare and brief. By analyzing Figure \ref{multDiscreetRatio}, we can first notice that the multivariate framework is also a good predictor. The forecasted transition ratios follow the trend of observed ratios and fit with different regimes. Comparing with the univariate case (see Figure \ref{fig:univTheta}), the multivariate model seems to be more sensitive to events: the multivariate model better captures the crisis of sep-2000 compared to the univariate model. The forecasted rating transition B to C is not only based on its own past evolution but also stem from the history of others.
\subsection{Comparison of the filters out of sample: annual recalibration}
\label{outOfSample}
We use a cross-validation approach to assess the predictive power of the multivariate models both in the continuous-time and discrete-time frameworks. To this end, we use data from 2000 to 2008 to perform a first calibration and to initialize our parameters. Then, from January 2008 to may 2021, we predict the dynamics of the 50 days transition rates. The model is re-calibrated every year, integrating the new observations of the last year. Note that we changed the reference time step to 50 days for a sake of computational speed.
\\
Note also that since we re-calibrate the model yearly, parameters and states structure vary over time.
\subsubsection{Multivariate continuous-time filtering}
\label{outOfSample-continuous}
In this section, we apply the continuous filtering framework, presented in Section \ref{sec:rating-continuous} and its adaptations, described in Section \ref{seq: continuous-adaptation}, to real data. We choose a reference time step equal to 50 days. The real and predicted 50 days rating transition ratios are presented in Figures \ref{fig:continuousRatiosABaa}, \ref{fig:continuousRatiosBaaBa}, \ref{fig:continuousRatiosBaB} and \ref{fig:continuousRatiosBC}.
\\
\begin{figure}[H]
   \begin{minipage}[c]{0.46\linewidth}
      \includegraphics[scale=0.5]{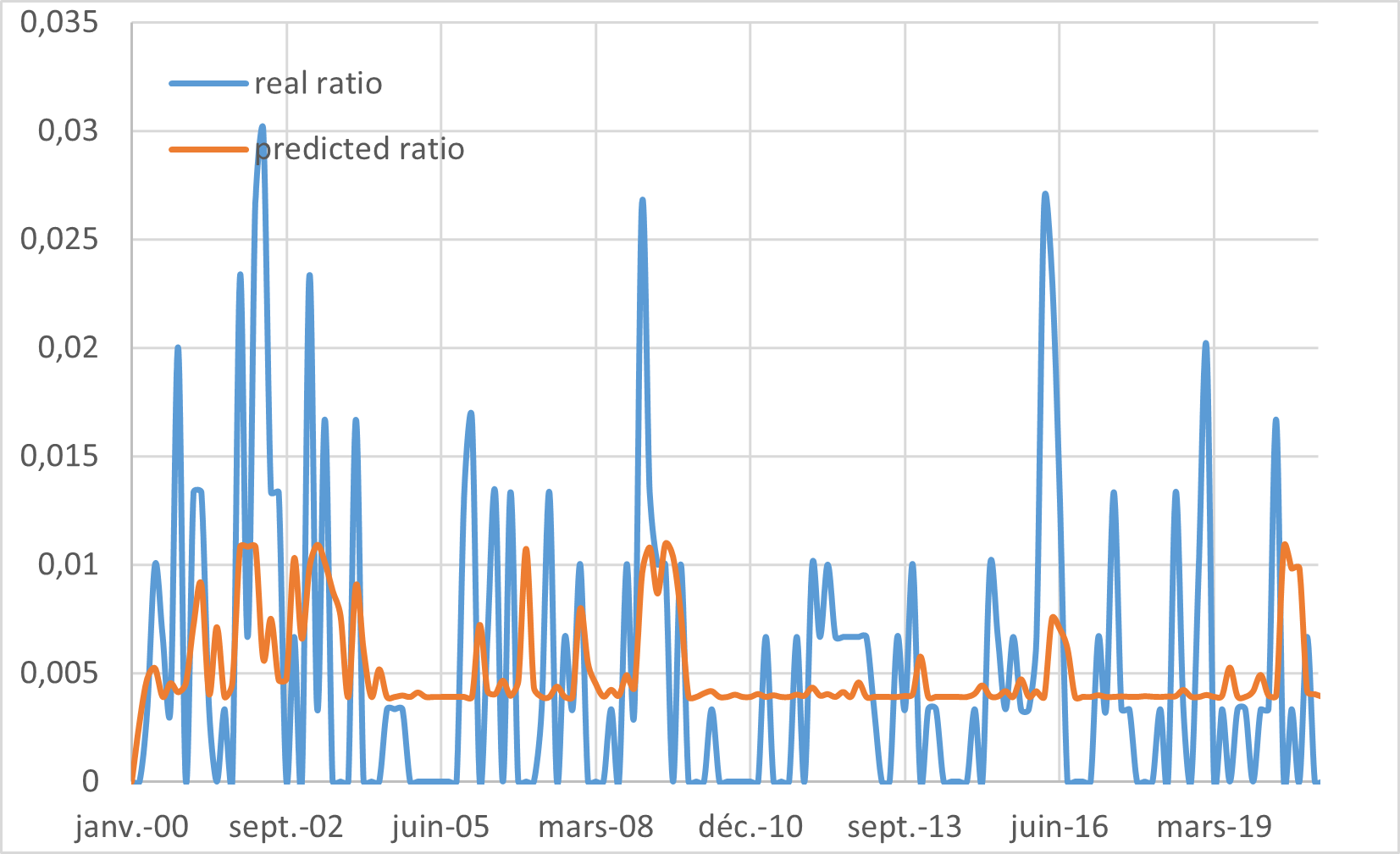}
      \caption{Real and predicted ratios 50 days transition from A to Baa}
      \label{fig:continuousRatiosABaa}
   \end{minipage} \hfill
   \begin{minipage}[c]{0.46\linewidth}
      \includegraphics[scale=0.5]{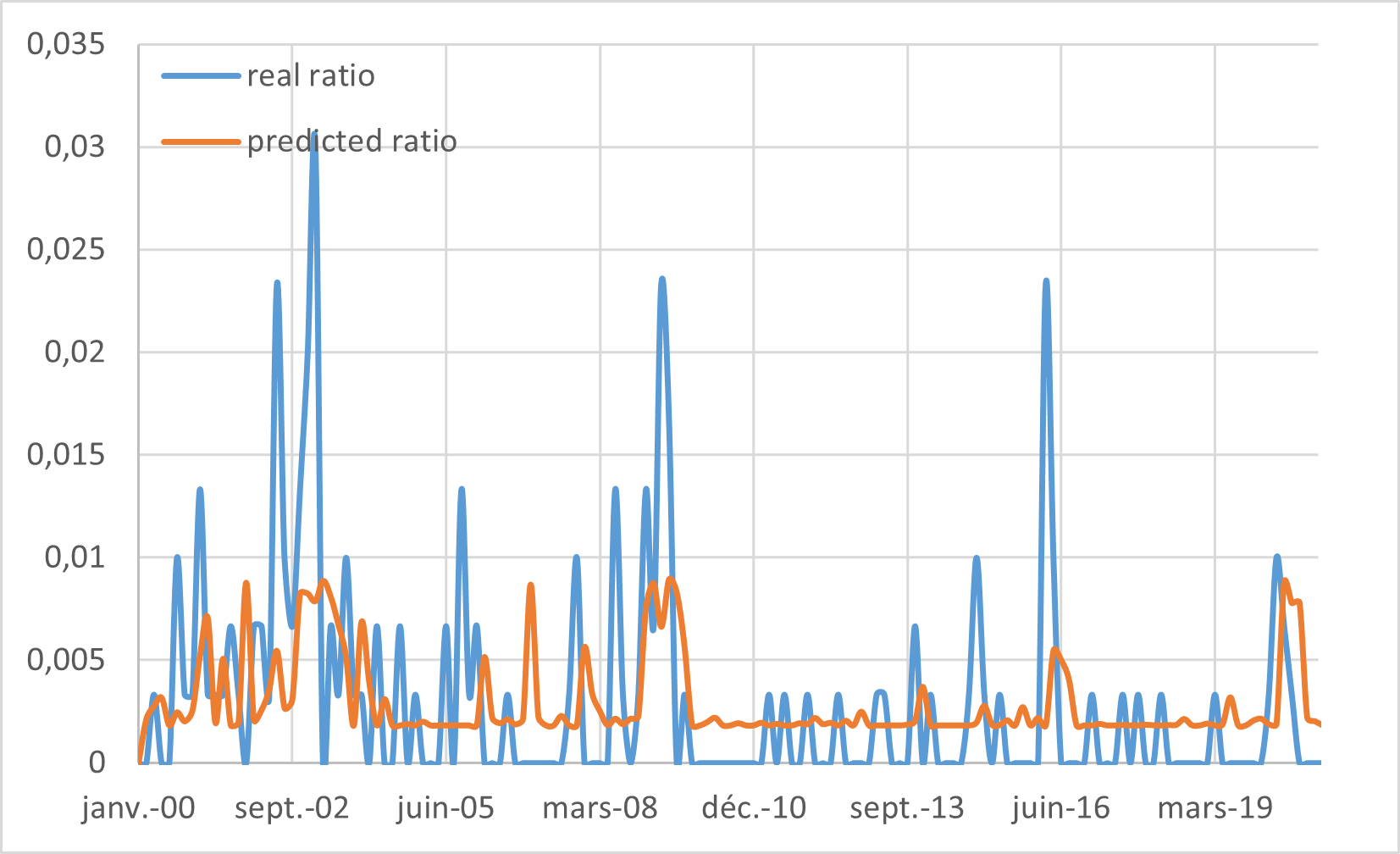} \caption{Real and predicted ratios 50 days transition from Baa to Ba}
      \label{fig:continuousRatiosBaaBa}
   \end{minipage}

   \begin{minipage}[c]{0.46\linewidth}
      \includegraphics[scale=0.5]{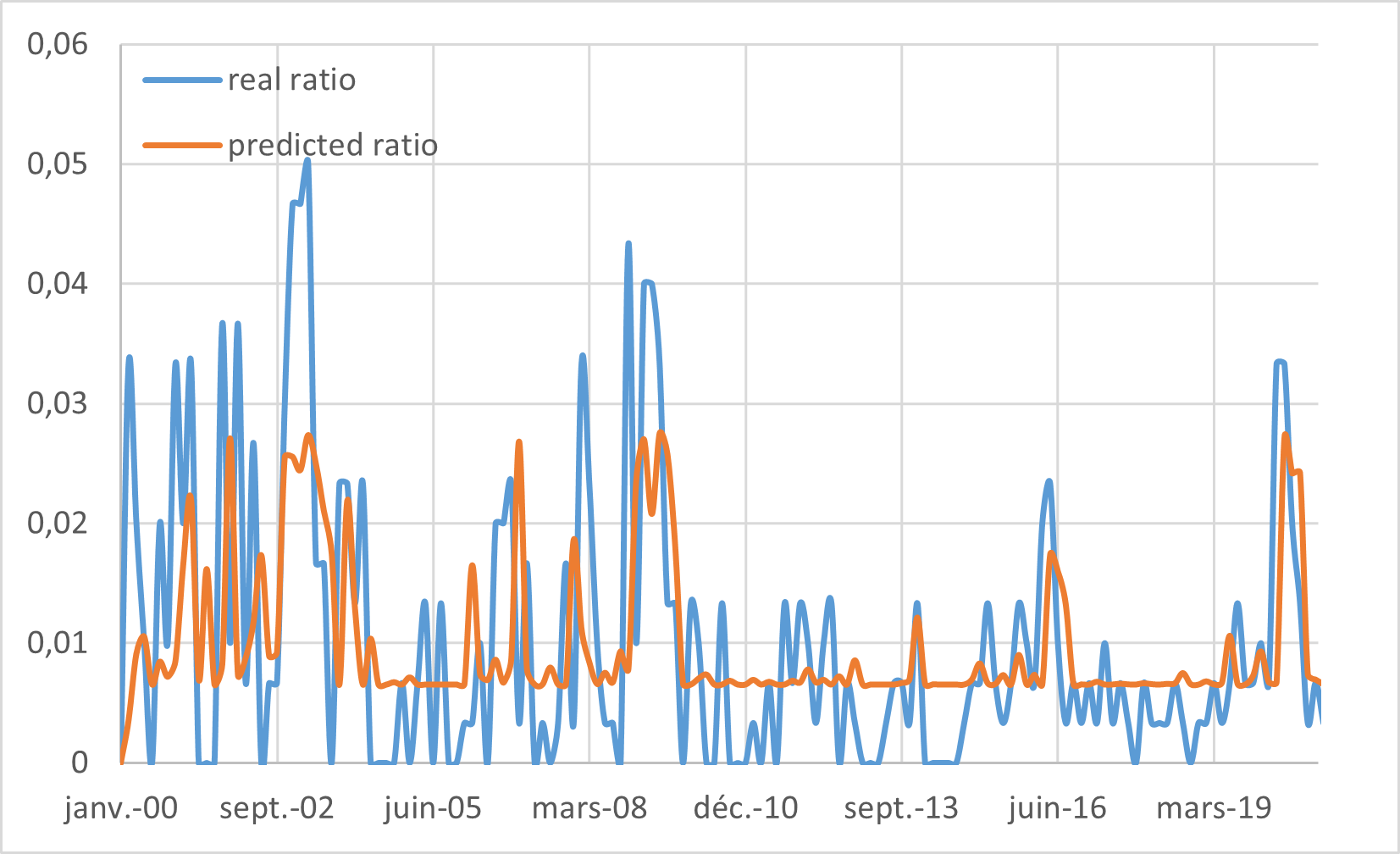} \caption{Real and predicted ratios 50 days transition from Ba to B}
      \label{fig:continuousRatiosBaB}
   \end{minipage} \hfill
   \begin{minipage}[c]{0.46\linewidth}
      \includegraphics[scale=0.5]{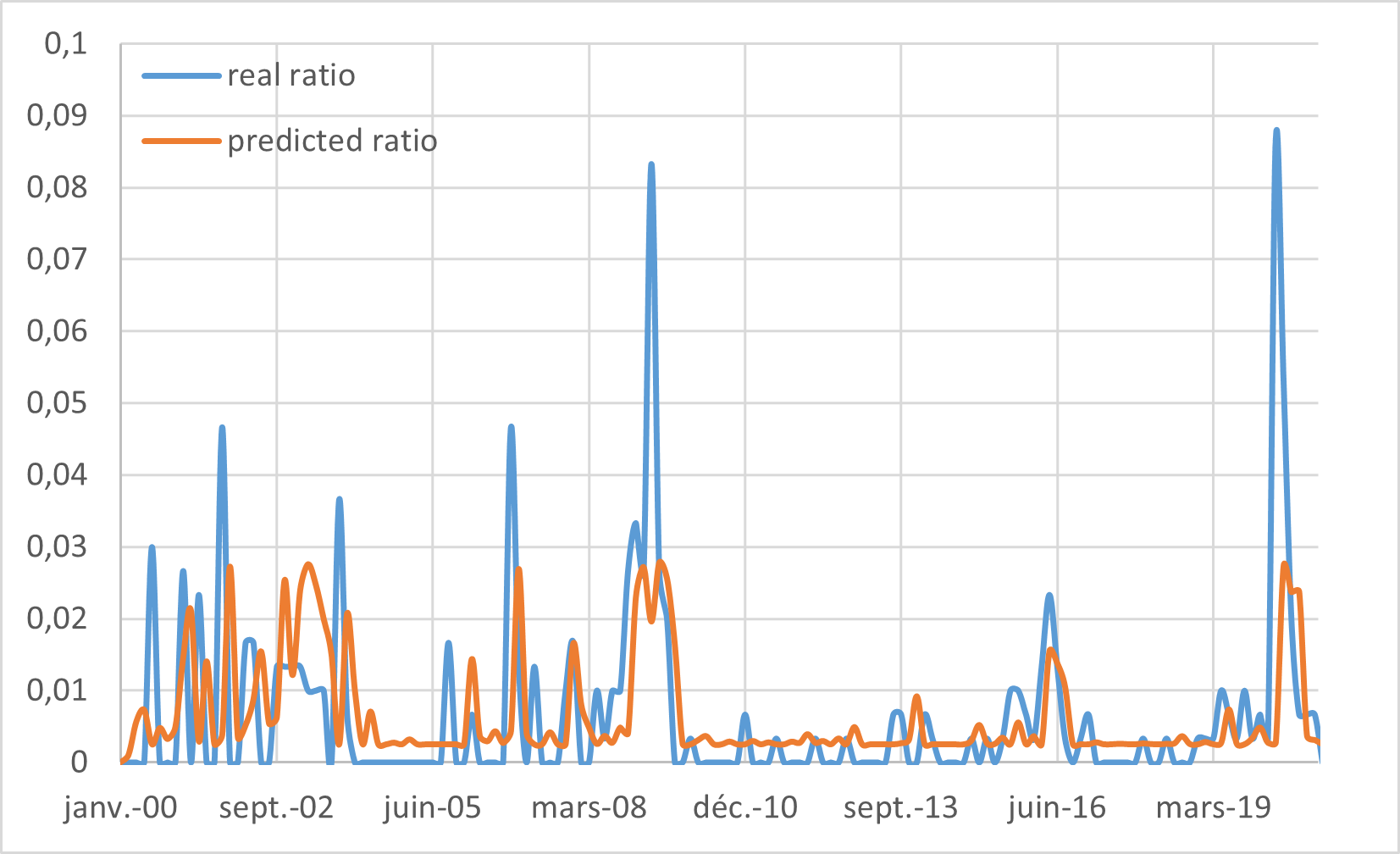} \caption{Real and predicted ratios 50 days transition from B to C}
      \label{fig:continuousRatiosBC}
   \end{minipage}
\end{figure}
\subsubsection{Multivariate discrete-time filtering}
Here, we apply the discrete-time filtering framework, presented in Section \ref{sec:rating-discrete} to real data. The model is applied on the same sample used for the continuous-time filtering approach, with annual recalibration as in Section \ref{outOfSample-continuous}. We keep a reference time step equal to 50 days.
Figures \ref{fig: MultDiscreetBackRatiosABaa}, \ref{fig: MultDiscreetBackRatiosBaaBa}, \ref{fig: MultDiscreetBackRatiosBaB}, \ref{fig: MultDiscreetBackRatiosBC} compare the dynamics of predicted transition ratios to observed one.
\begin{figure}[H]
   \begin{minipage}[c]{0.46\linewidth}
      \includegraphics[scale=0.5]{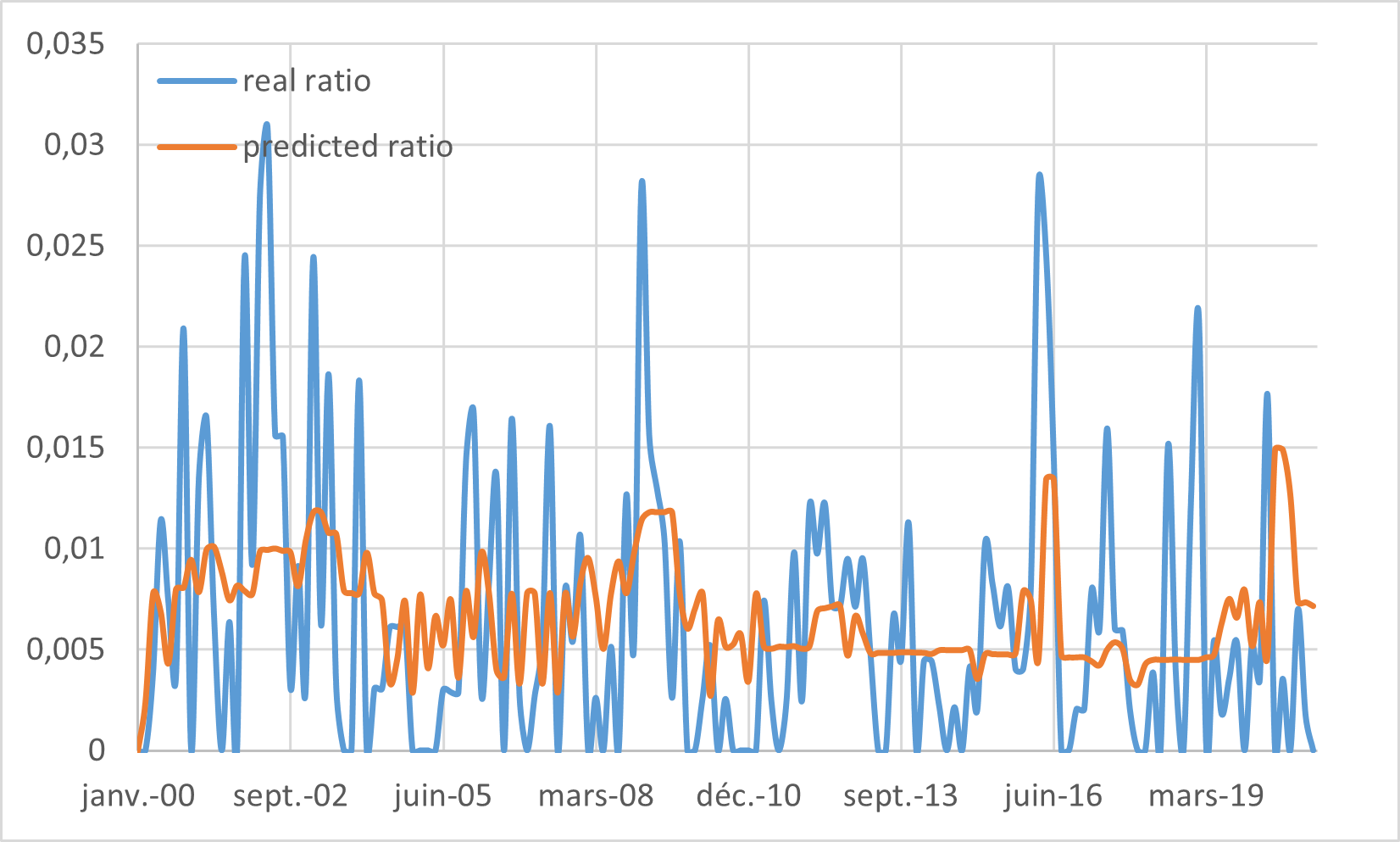}
      \caption{Real and predicted 50 days-transition ratios from A to Baa}
      \label{fig: MultDiscreetBackRatiosABaa}
   \end{minipage} \hfill
   \begin{minipage}[c]{0.46\linewidth}
      \includegraphics[scale=0.5]{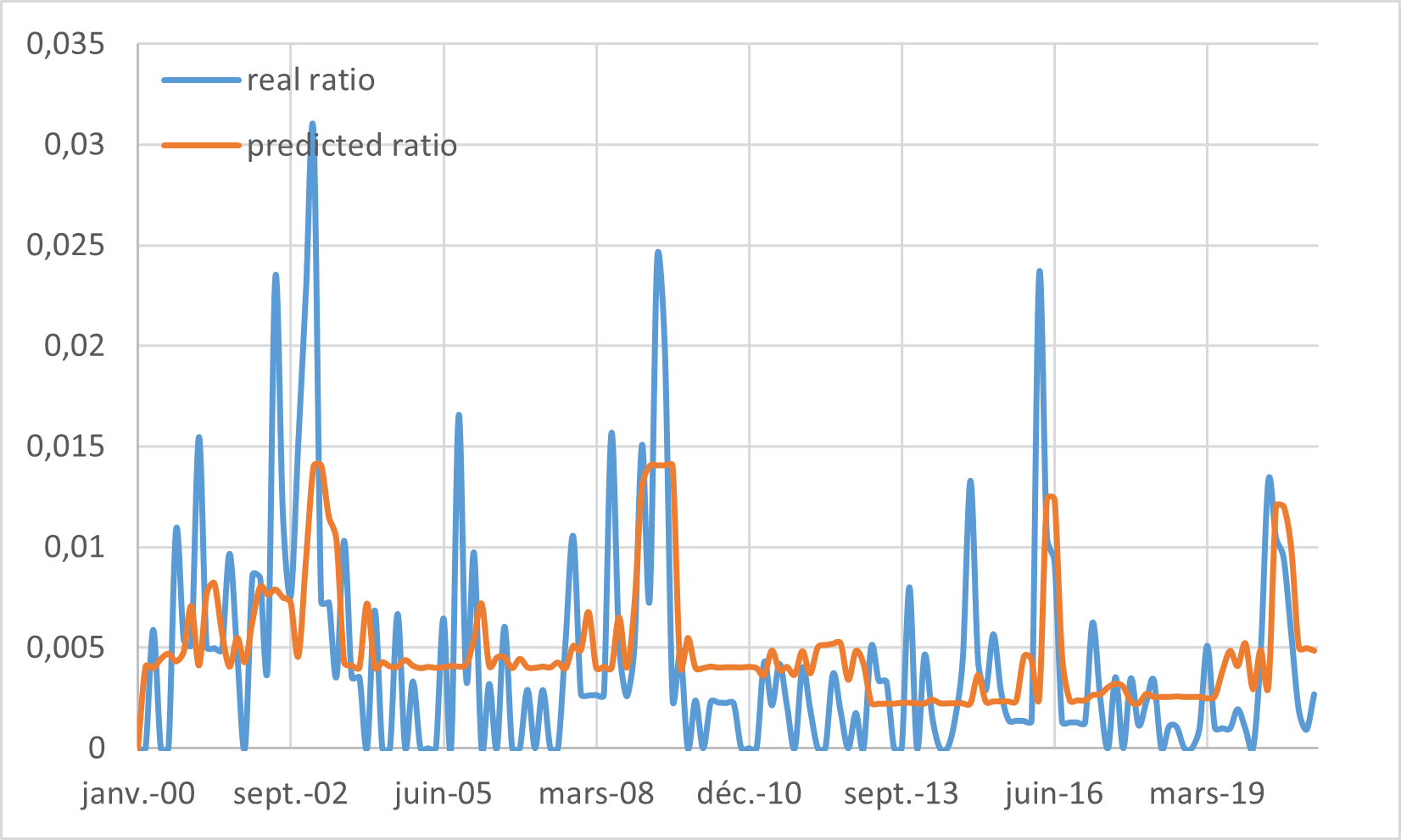} \caption{Real and predicted 50 days-transition ratios from Baa to Ba}
      \label{fig: MultDiscreetBackRatiosBaaBa}
   \end{minipage}

   \begin{minipage}[c]{0.46\linewidth}
      \includegraphics[scale=0.5]{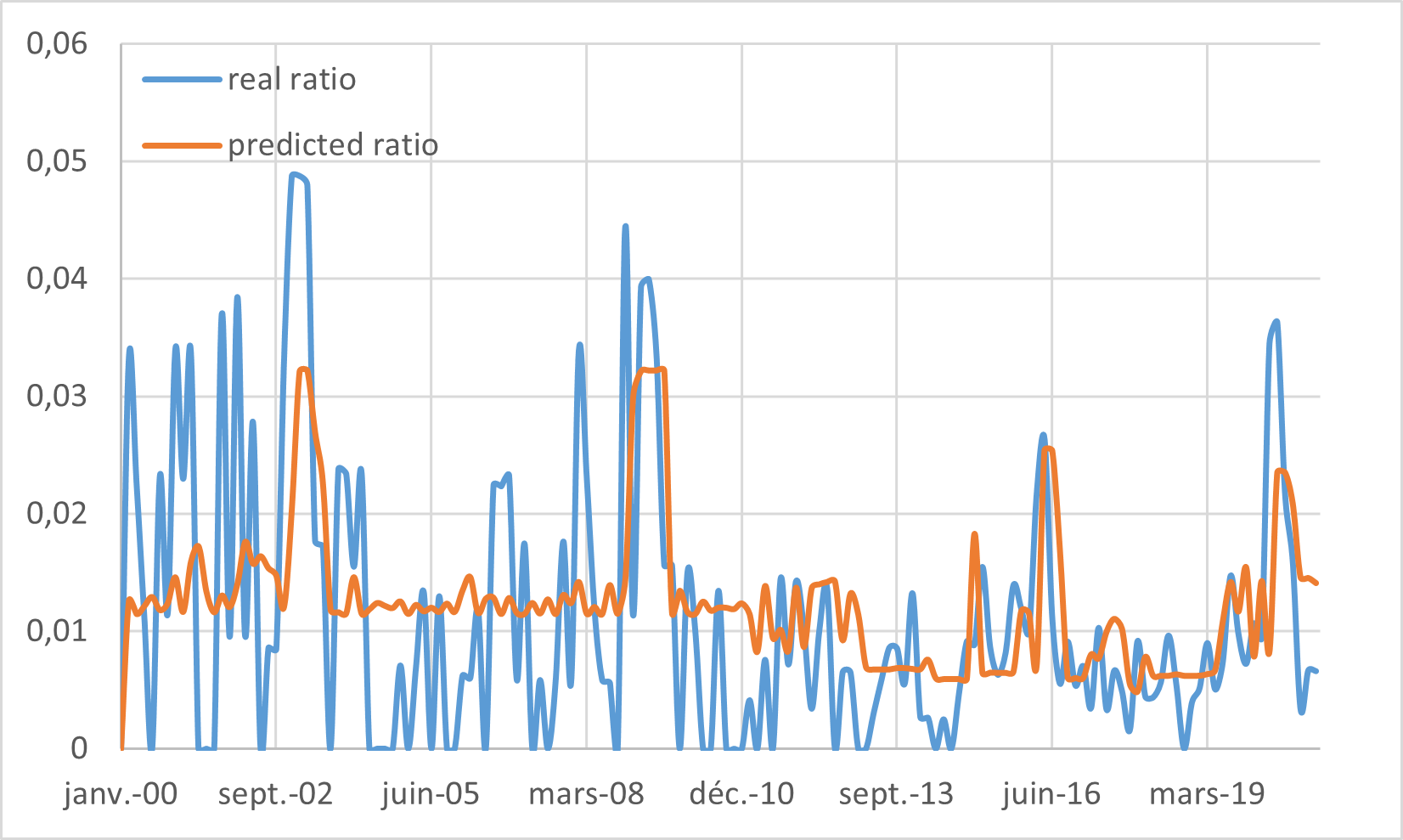} \caption{Real and predicted 50 days-transition ratios from Ba to B}
      \label{fig: MultDiscreetBackRatiosBaB}
   \end{minipage} \hfill
   \begin{minipage}[c]{0.46\linewidth}
      \includegraphics[scale=0.5]{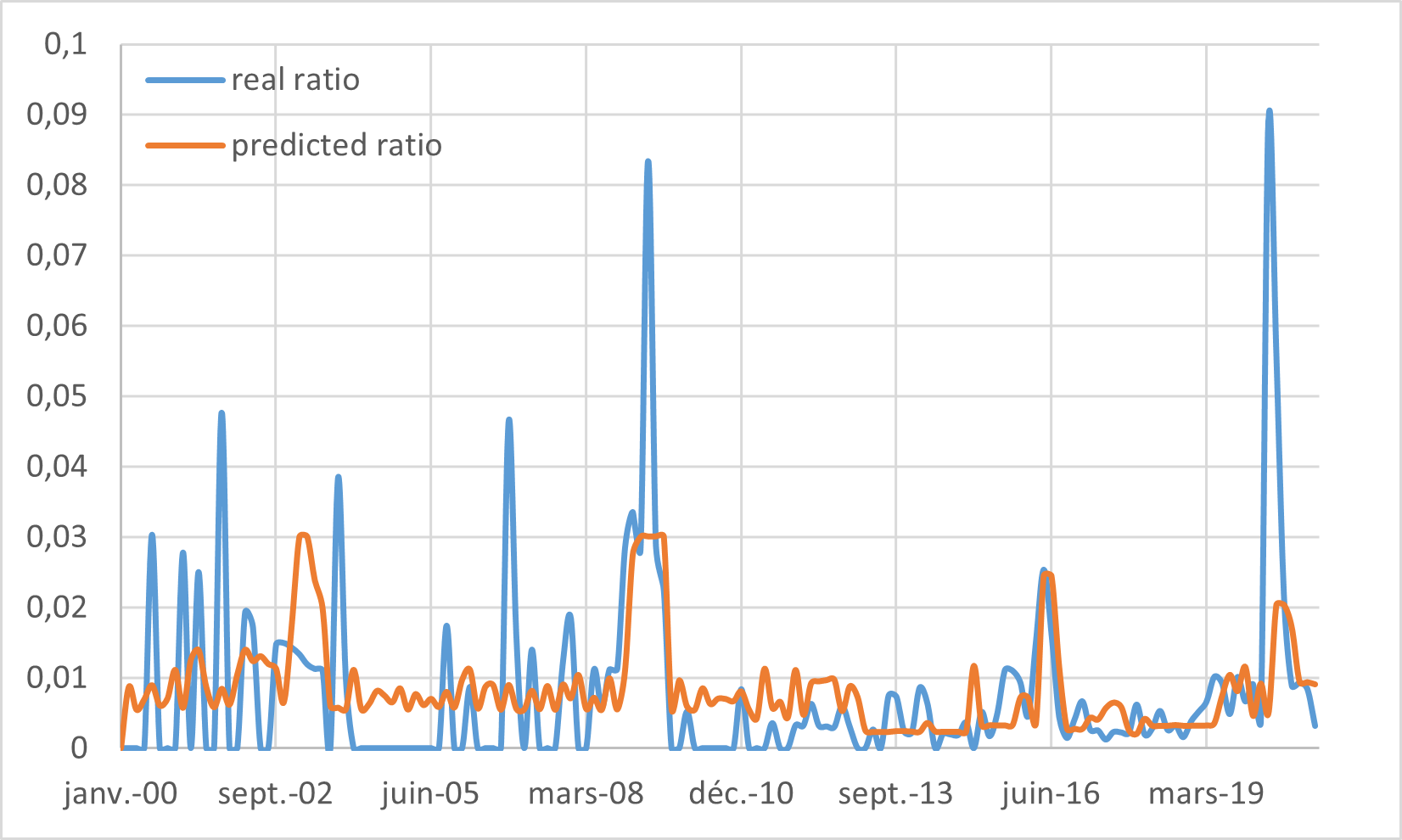} \caption{Real and predicted 50 days-transition ratios from B to C}
      \label{fig: MultDiscreetBackRatiosBC}
   \end{minipage}
\end{figure}

\subsubsection{Comparisons and analyses}

The results looks almost similar in both approaches. The dynamics of predicted ratios follow the trend of realized ratios. The forecasts also evolve when noteworthy crisis occurs.
We notice that transitions are more correlated during specific periods like crisis. Four crisis periods can be identified: a first small one around 2002, a moderated one in 2016 and two significant in 2008 and 2020. These latter are clearly identified as the subprime crisis and the health crisis caused by the Covid 19. The two others, moderated, would be respectively the consequences of the dot-com bubble in 2000 and the China stock market crash in June, 2015. During these periods the downgrades probabilities increase.\\

Both models are able to detect the evolution of the economic cycle from observations of rating migrations. The forecasts are adapted to the inferred economic state. During crisis periods, the models are able to predict adapted and higher downgrade probabilities.
\\
\\
We can underline three advantages of the continuous-time version compared to the discrete-time one.
\begin{itemize}
    \item The effect of delay (or lag effect) is less significant than in the discrete-time framework. By spreading simultaneous jumps in small time intervals, we make last information used for filtering fresher than it actually is. This fictive operation, however, improves the predictions.

    \item We note that the discrete-time model struggles to capture brief and brutal variations. As we observed in Section \ref{ContinuousFictive}, continuous-time filtering approach has the advantage of assimilating jumps one by one and of being more flexible and suitable to anticipate sudden transitions. Since information is spread and distributed in fictive intervals, the filter progressively assimilates information and is therefore quicker to adapt its predictions.
\end{itemize}
Nevertheless we can see that this framework is not fully adapted to rating migrations. The discrete version is easier and faster to compute: manipulations described in Section \ref{seq: continuous-adaptation} increases considerably the number of time intervals to consider, the complexity and remains laborious. Above all, the discrete model is more consistent with the data and finally, provides predictions of a better quality.
The continuous approach deals with continuous-time Markov chains. Therefore it could be improved by using an EM algorithm which estimates intensities directly. The effectiveness of this method would rather be highlighted by filtering a continuous phenomenon, where observations cannot occur simultaneously and exact occurrence dates are known. This intuition is confirmed by the following experiment. We compute the $R^2$ coefficient in the sample, to compare the forecasting power of the considered predictive models. We keep a reference time step of 50 days. We respectively compare the $R^2$ of the constant generator intensity model, the univariate models, and the multivariate discrete models and the continuous model in Table \ref{fig:my_label}.

\begin{table}[ht]
    \centering
    \footnotesize
    \caption{$R^2$ in the sample}
    \begin{tabular}{|c|c|c|c|c|c|}
        \hhline{~*{4}{-}}
        \multicolumn{1}{c|}{} & $A \rightarrow Baa$ & $Baa\rightarrow Ba$ & $Ba \rightarrow B$ & $B \rightarrow C$   \\ \hline
        Constant & {{0.463012}
} & {{0.250668}
} & {{0.483855}
} & {{0.250514}
}    \\ \hline
        Univ.Discrete & {{0.431184}
} & {{0.346886}
} & {{0.608684}
} & {{0.22263515}
}       \\ \hline
        Mult.Discrete & {{0.494395}
} & {{0.479324}
} & {{0.644975}
} & {{0.367094}
}       \\ \hline
        Mult.Continuous & {{0.2022}
} & {{0.331062}
} & {{0.49396}
} & {{0.279736}
}      \\ \hline

    \end{tabular}
     \label{fig:my_label}
\end{table}

We can directly notice that the multivariate discrete-time filter is the most accurate whatever the transition. The $R^2$ of the continuous filter is closed to discrete filter for transitions Baa to Ba and B to C but is lower for the transitions A to Ba, even lower than the $R^2$ from the constant generator model.
This phenomenons can be explained by the poorer calibration achieved for the continuous-time filtering (for a sake of rapidity) and the inconsistency with the format of the data.
The adapted continuous-time version can be applied to rating transitions framework and provides satisfactory predictions but can not reach the performance of the discrete-time version.
Note that, in the univariate case, each transition has its specific model.
\section{General discrete-time version of the filter}
\label{sec:discreet-filter}
We present in this section a general discrete version of the filter, where the hidden process does not need to be a Markov chain.

\subsection{Univariate Form}

Let $\Gamma \in \mathbb{N}$ be the discrete time horizon. We work with the filtered probability space $(\Omega,\bfA,\bfF = (\cF_n)_{n \in \{0,\ldots,\Gamma\}},\Prob)$.
Let $N$ be a discrete-time $\bfF$-adapted counting process starting from $0$. 
Let $\bfF^N$ be the natural filtration of $N$, augmented with $\Prob-$null sets. We write
$$
 \forall \ n \in \{1,\ldots,\Gamma\}, \ \Delta N_{n}=N_{n}-N_{n-1}.
$$
We denote by $\J$, the support of the jumps of $N$ ($\J=\mathbb{N}$ for a Poisson process). The support could vary over time but as this would not impact our results, it is assumed constant for the sake of clearness. To account for the times when $N$ does not jump, we assume that $0 \in \J$ and we define $\bar{\J}=\J\backslash\{0\}$ as the support of the true jumps. Let $\Theta$ be a square integrable $\bfF$-adapted process such that conditionally on the $\sigma$-field $\cF_{n-1}$, $\Theta_n$ and $\Delta N_n$ are independent. Let $\bfF^{\Theta}$ be the natural filtration of $\Theta$,  augmented with $\Prob-$null sets.
$\Theta$ has a natural decomposition of the form 
$$
\Theta_{n}=A_{n}+M_{n} \,,
\label{eq:univariate-discreet-filter}
$$
where $A$ is a $\bfF$--predictable square integrable process and $M$ is a square integrable $\bfF$--martingale.
\\
We define the filtration $\bfG=(\cG_n)_{n \in \{0,\ldots,\Gamma\}}$ by $\cG_n=\cF^{\theta}_n \vee \cF^N_{n-1}$. For $n \in \{1, \dots, \Gamma\}$, we introduce
$$
\hat{O}_{n}=\E[O_{n}|\cF_{n}^N], \ \tilde{O}_{n-1}=\E[O_{n}|\cF_{n-1}].
$$
 For $n \in \{1, \dots, \Gamma\}$ and $j \in \J$, we define
$$
\epsilon_{n}^{j}=\mathds{1}_{[\Delta N_{n}=j]}, \
\lambda_{n-1}^{j}=\E[\epsilon_{n}^{j}|\cF_{n-1}], \
\hat{\lambda}_{n-1}^{j}=\E[\lambda_{n-1}^{j}|\cF_{n-1}^{N}]=\E\left[\E[\epsilon_{n}^{j}|\cF_{n-1}]|\cF_{n-1}^{N}\right].
$$
Note that, for all $n$, we have $\sum_{j\in \J} \epsilon_{n}^j= \sum_{j\in \J} \lambda_{n}^j=\sum_{j\in \J} \hat{\lambda}_{n}^j=1$.
Under this setting, we present a discrete time adaptation of the univariate filtering equation presented in \cite{Bremaud1981}.
\begin{proposition}
\label{prop}
The filtered process $\hat{\Theta}$ satisfies the following equation
\begin{equation}
\label{eq:discreet-filter}
\hat{\Theta}_{n}=\sum_{j\in \J}\frac{\widehat{(\tilde{\Theta}\lambda^j)}_{n-1}}{\hat{\lambda}_{n-1}^{j}}\mathds{1}_{[\Delta N_{n}=j]} , \quad \forall n=1,\dots,\Gamma.
\end{equation}
\end{proposition}
The proof of Proposition \ref{prop} heavily relies on the following lemma.  
\begin{lemma}
\label{lemma}
Let $K$ be a square integrable and $\bfG$-adapted process such that $\hat{K}$ is a $\bfF^N$-martingale. Then, $K$ is solution of the following recursive equation
\begin{equation}
\hat{K}_n={K}_{0} + \sum_{k=1}^{n}\sum_{j\in \J}\frac{\widehat{(\tilde{K}\lambda^j)}_{k-1}}{\hat{\lambda}_{k-1}^{j}}(\epsilon_{k}^{j}-\hat{\lambda}_{k-1}^{j}), \quad \forall n=1,\dots,\Gamma.
\end{equation}
\end{lemma}
\begin{remark}
If $K$ is also a $\bfF$-martingale then the previous equation also holds true with $\tilde{K}= K$. 
\end{remark}

\begin{proof}[of Lemma \ref{lemma}]
Let $P$ be a square integrable $\bfF^N$--martingale. Then, there exists a measurable function $g$ such that $P_{n}=g(n,N_{0},..N_{n})$. It can equivalently be written $P_{n}=h(n,\Delta N_{1},..,\Delta N_{n})$.
\\
We can write
$$
P_{n}=\sum_{j\in \J} h(n,\Delta N_{1},..,\Delta N_{n-1},j)\epsilon_{n}^{j}.
$$
Since $P$ is a $\bfF^N$--martingale,
\begin{align*}
P_{n}-P_{n-1}=P_n -\E[P_{n}|\cF_{n-1}^N]=\sum_{j\in \J} h(n,\Delta N_{1},..,\Delta N_{n-1},j)(\epsilon_{n}^{j}-\hat{\lambda}_{n-1}^{j}).
\end{align*}
Then, P has the following martingale representation 
$$
P_{n}=P_{0}+\sum_{k=1}^n\sum_{j\in \J} H_{k-1}^{j}(\epsilon_{k}^{j}-\hat{\lambda}_{k-1}^{j}),
$$
with $H_{k-1}^{j}$ being $\cF_{k-1}^N$-measurable.
Conversely, it is easy to check that a process written like that is a  $\bfF^N$--martingale.
\\
\\
Since $\epsilon_{k}^0=1-\sum_{j\in \bar{\J}}\epsilon_{k}^j$ and $\hat{\lambda}_{k}^0=1-\sum_{j\in \bar{\J}}\hat{\lambda}_{k}^j$, we can rewrite $P$ with the following martingale representation
\begin{equation}
\label{representationBis}
P_{n}=P_{0}+\sum_{k=1}^n\sum_{j\in \bar{\J}} W_{k-1}^{j}(\epsilon_{k}^{j}-\hat{\lambda}_{k-1}^{j}),
\end{equation}
with $W_{k-1}^{j}=H_{k-1}^j -H_{k-1}^0 $.
\\
Let $K$ be a square integrable $\bfG$-adapted process such that $\hat{K}$ is a $\bfF^N$--martingale. 
According to (\ref{representationBis}), we can write 
$
\hat{K}_{n}=\zeta+\sum_{k=1}^n\sum_{j\in \bar{\J}} W_{k-1}^{j}(\epsilon_{k}^{j}-\hat{\lambda}_{k-1}^{j})$, where the sequence $(W_k)_k$ is $\cF^N$-adapted.
For any square integrable $\bfF^N$--adapted process $X$, we have $\E[(K_{n}-\hat{K}_{n})X_{n}]=0$.
Choosing $X$ to be a $\bfF^N$--martingale with the decomposition $g + \sum_{k=1}^n \sum_{j\in \bar{\J}} G_{k-1}^{j}(\epsilon_{k}^j-\hat{\lambda}_{k-1}^j)$ with $(G_k)_k$ $\bfF^N$-adapted,  we obtain
$$
\E\left[\left(K_{n}-\zeta-\sum_{k=1}^n\sum_{j\in \bar{\J}} W_{k-1}^{j}(\epsilon_{k}^j-\hat{\lambda}_{k-1}^{j})\right)\left(g+\sum_{k=1}^n\sum_{j\in \bar{\J}} G_{k-1}^{j}(\epsilon_{k}^j-\hat{\lambda}_{k-1}^j)\right)\right]=0.
$$
Choosing $G_{k-1}^j=0$  for all $j\in \bar{\J}$ leads to $\zeta=\E[\hat{K}_{n}]=\E[K_{n}]=K_{0}$.
\\
With no loss of generality, we can consider that $K_0=0$. By choosing $g=0$ we obtain: 
\begin{align}
\label{eq:decomposition}
\E\left[K_{n}\sum_{k=1}^n\sum_{j\in \bar{\J}} G_{k-1}^{j}(\epsilon_{k}^{j}-\hat{\lambda}_{k-1}^{j})\right]-\E\left[\sum_{j_{1},j_{2}\in \bar{\J}}\sum_{k_{1},k_{2}\geq 1}^n W_{k_{1}-1}^{j_{1}}(\epsilon_{k_{1}}^{j_{1}}-\hat{\lambda}_{k_{1}-1}^{j_{1}})G_{k_{2}-1}^{j_{2}}(\epsilon_{k_{2}}^{j_{2}}-\hat{\lambda}_{k_{
2}-1}^{j_{2}})\right]=0.
\end{align}
\\
For $k_{1}< k_{2}$,
\begin{align*}
\begin{aligned}
\E\left[\E\left[W_{k_{1}-1}^{j_{1}}(\epsilon_{k_{1}}^{j_{1}}-\hat{\lambda}_{k_{1}-1}^{j_{1}})G_{k_{2}-1}^{j_{2}}(\epsilon_{k_{2}}^{j_{2}}-\hat{\lambda}_{k_{
2}-1}^{j_{2}})|\cF_{k_{2}-1}^N\right]\right]
=0
\end{aligned}
\end{align*}
Noticing that $\E\left[K_n|\cF_{k}^N\right]=\E\left[\E\left[K_n|\cF_{n}^N\right]|\cF_{k}^N\right]=\hat{K}_k$, we compute the term 
\begin{align*}
\begin{aligned}
S_{n,k}
= & \E\left[K_{n}G_{k-1}^{j}(\epsilon_{k}^{j}-\hat{\lambda}_{k-1}^{j})\right]
=  \E\left[\E\left[K_{n}|\cF_{k}^N\right]G_{k-1}^{j}(\epsilon_{k}^{j}-\hat{\lambda}_{k-1}^{j})\right]\\
= & \E\left[\hat{K}_{k}G_{k-1}^{j}(\epsilon_{k}^{j}-\hat{\lambda}_{k-1}^{j})\right]
= \E\left[{K}_{k}G_{k-1}^{j}(\epsilon_{k}^{j}-\hat{\lambda}_{k-1}^{j})\right].\\
\end{aligned}
\end{align*}
Now, let us compute
\begin{align*}
\begin{aligned}
\E[K_{k}(\epsilon_{k}^{j}-\hat{\lambda}_{k-1}^{j})|\cF_{k-1}^N] 
= & \E[K_{k}\epsilon_{k}^{j}|\cF_{k-1}^N]- \E[K_{k}\hat{\lambda}_{k-1}^j|\cF_{k-1}^N]\\
= & \E\left[\E[K_{k}\epsilon_{k}^{j}|\cF_{k-1}]|\cF_{k-1}^N\right]-\hat{K}_{k-1}\hat{\lambda}_{k-1}^{j}.
\end{aligned}
\end{align*}
Remember that $K$ is $\bfG$-measurable and that $\cG_k=\cF^{\theta}_k \vee \cF^N_{k-1}$. The conditional independence of $\Theta_k$ and $\Delta N_k$ knowing $\cF_{k-1}$ yields the conditional independence of $K_k$ and $\epsilon_k$. Hence, we obtain
\begin{align*}
 \begin{aligned}
 \E\left[\E[K_{k}\epsilon_{k}^{j}|\cF_{k-1}]|\cF_{k-1}^N\right]-\hat{K}_{k-1}\hat{\lambda}_{k-1}^{j}
= &\E\left[\E[K_{k}|\cF_{k-1}]\E[\epsilon_{k}^{j}|\cF_{k-1}]|\cF_{k-1}^N\right]-\hat{K}_{k-1}\hat{\lambda}_{k-1}^{j}\\
= &\widehat{(\tilde{K}\lambda^j)}_{k-1}-\hat{K}_{k-1}\hat{\lambda}_{k-1}^{j}.\\
\end{aligned}
\end{align*}
From \ref{eq:decomposition}, we obtain
\begin{align*}
\E\left[\sum_{k=1}^n \sum_{j\in \bar{\J}} G_{k-1}^{j}\left(\widehat{(\tilde{K}\lambda^j)}_{k-1}-\hat{K}_{k-1}\hat{\lambda}_{k-1}^{j}-\E\left[\sum_{i\in \bar{\J}} W_{k-1}^{i}(\epsilon_{k}^{i}-\hat{\lambda}_{k-1}^{i})(\epsilon_{k}^{j}-\hat{\lambda}_{k-1}^{j})|\cF_{k-1}^{N}\right]\right)\right]=0
\end{align*}
As this holds true for any $\bfF^N$-adapted process $(G_k)_k$, we can choose
\begin{align*}
G_{k-1}^{j}=\widehat{(\tilde{K}\lambda^j)}_{k-1}-\hat{
K}_{k-1}\hat{\lambda}_{k-1}^{j}-\E\left[\sum_{i\in \bar{\J}} W_{k-1}^{i}(\epsilon_{k}^{i}-\hat{\lambda}_{k-1}^{i})(\epsilon_{k}^{j}-\hat{\lambda}_{k-1}^{j})|\cF_{k-1}^N\right].
\end{align*}
Then, we deduce the following system of linear equations, 
\begin{equation}
\label{syst}
\forall \ j \in \bar{\J}, \ k\in\{1,\dots,n\}, \ \sum_{i\in \bar{\J}} W_{k-1}^{i}\E\left[(\epsilon_{k}^{i}-\hat{\lambda}_{k-1}^{i})(\epsilon_{k}^{j}-\hat{\lambda}_{k-1}^{j})|\cF_{k-1}^N\right]=\widehat{(\tilde{K}\lambda^j)}_{k-1}-\hat{
K}_{k-1}\hat{\lambda}_{k-1}^{j} \quad a.s.
\end{equation}
Let $c_j^{(k-1)}= \widehat{(\tilde{K}\lambda^j)}_{k-1}-\hat{
K}_{k-1}\hat{\lambda}_{k-1}^{j}$. Note that
\begin{equation*}
\E\left[(\epsilon_{k}^i-\hat{\lambda}_{k-1}^{i})(\epsilon_{k}^j-\hat{\lambda}_{k-1}^j)|\cF_{k-1}^N\right]= 
\begin{cases}
-\hat{\lambda}_{k-1}^i\hat{\lambda}_{k-1}^j & 
\mbox{for } i \neq j\\
\hat{\lambda}_{k-1}^j (1 - \hat{\lambda}_{k-1}^j) & \mbox{for } i = j
\end{cases}
\end{equation*}
The equations in (\ref{syst}) can be written for all $j \in \J$
\begin{align*}
\begin{aligned}
\hat{\lambda}_{k-1}^j (1 - \hat{\lambda}_{k-1}^j)W_{k-1}^j-\sum_{\substack{i \in \bar{\J}\\i \neq j}} \hat{\lambda}_{k-1}^i\hat{\lambda}_{k-1}^j W_{k-1}^i=c_j^{(k-1)}. 
\end{aligned}
\end{align*}
Then, we obtain for all $j \in \J$
\begin{equation}
\label{syst2}
\hat{\lambda}_{k-1}^j W_{k-1}^j-\hat{\lambda}_{k-1}^j\sum_{i \in \bar{\J}} \hat{\lambda}_{k-1}^i W_{k-1}^i=c_j^{(k-1)}.
\end{equation}
By summing for $j \in \bar{\J}$, we deduce from (\ref{syst2}),
\begin{align*}
\begin{aligned}
\sum_{j \in \bar{\J}} \hat{\lambda}_{k-1}^j W_{k-1}^j-\sum_{j\in \bar{\J}}\hat{\lambda}_{k-1}^j\sum_{i \in \bar{\J}} \hat{\lambda}_{k-1}^i W_{k-1}^i=\sum_{j\in \bar{\J}}c_j^{(k-1)}.
\end{aligned}
\end{align*}
Then 
\begin{align*}
\begin{aligned}
\hat{\lambda}_{k-1}^0\sum_{j \in \bar{\J}} \hat{\lambda}_{k-1}^j W_{k-1}^j=\sum_{j\in \bar{\J}}c_j^{(k-1)}.
\end{aligned}
\end{align*}
Inserting the expression of $\sum_{i \in \bar{\J}} \hat{\lambda}_{k-1}^i W_{k-1}^i$ in (\ref{syst2}) gives 
\begin{align*}
\begin{aligned}
W_{k-1}^j=\frac{c_j^{(k-1)}}{\hat{\lambda}_{k-1}^j} + \frac{\sum_{i\in \bar{\J}}c_{i}^{(k-1)}}{\hat{\lambda}_{k-1}^0}.
\end{aligned}
\end{align*}
Then, we obtain 

\begin{align*}
\begin{aligned}
W^{j}_{k-1}=\frac{1}{\hat{\lambda}_{k-1}^{j}}(\widehat{(\tilde{K}\lambda^j)}_{k-1}-\hat{K}_{k-1}\hat{\lambda}_{k-1}^{j}) + \frac{1}{\hat{\lambda}_{k-1}^{0}}\sum_{i\in \bar{\J}}(\widehat{(\tilde{K}\lambda^i)}_{k-1}-\hat{K}_{k-1}\hat{\lambda}_{k-1}^{i}).
\end{aligned}
\end{align*}

Finally, by replacing $\sum_{j\in \bar{\J}}\epsilon_{k}^{j}$, $\sum_{j\in \bar{\J}}\hat{\lambda}_{k-1}^{j}$ by $1-\epsilon_{k}^{0}$ and $1-\hat{\lambda}_{k-1}^{0}$, we derive the general filtering formula 
\begin{align*}
\begin{aligned}
\label{eq:univarite-discreete-martingale}
\hat{K}_{n}
= & K_{0} + \sum_{k=1}^{n}\sum_{j\in \bar{\J}}\left(\frac{\widehat{(\tilde{K}\lambda^j)}_{k-1}}{\hat{\lambda}_{k-1}^{j}}-\frac{\widehat{(\tilde{K}\lambda^0)}_{k-1}}{\hat{\lambda}_{k-1}^{0}}\right)(\epsilon_{k}^{j}-\hat{\lambda}_{k-1}^{j})\\
= & {K}_{0} + \sum_{k=1}^{n}\sum_{j\in \J}\frac{\widehat{(\tilde{K}\lambda^j)}_{k-1}}{\hat{\lambda}_{k-1}^{j}}(\epsilon_{k}^{j}-\hat{\lambda}_{k-1}^{j}),
\end{aligned}
\end{align*}
This finishes the proof. 
\end{proof}
\begin{proof}[of Proposition \ref{prop}]
We define
\begin{equation}
\label{eq:ak}
a_{n}=A_{n}-A_{n-1}=\E[\Theta_{n}|\cF_{n-1}]-\Theta_{n-1},
\end{equation}
\begin{equation*}
B_{n}=\sum_{k=1}^{n}\E[a_{k}|\cF_{k-1}^N]
\end{equation*}
\begin{equation*}
L_{n}=\sum_{k=1}^n a_{k} - \E[a_{k}|\cF_{k-1}^N].
\end{equation*}
Note that
\begin{equation*}
\Theta_{n}-B_{n}=M_{n}+L_{n}.
\end{equation*}
We know that $\hat{M}$ is a $\bfF^N$--martingale and $\hat{L}$ is clearly a $\bfF^N$--martingale too. 
Then, we can apply Lemma~\ref{lemma} to $\Theta - B$. Note that $B$ is $\bfF^{N}$ predictable, then $\tilde{B}_{k-1}=B_{k}$ and so $\widehat{\tilde{B}\lambda^{j}}_{k-1}=B_{k} \hat{\lambda}^{j}_{k-1}$. Then,
\begin{align*}
\begin{aligned} 
\hat{\Theta}_{n}-\hat{B}_{n}
= & \hat{\Theta}_{0} - \hat{B}_{0} + \sum_{k=1}^{n}\sum_{j\in \J}(\frac{\reallywidehat{((\tilde{\Theta}-\tilde{B})\lambda^{j})}_{k-1}}{\hat{\lambda}_{k-1}^{j}})(\epsilon_{k}^{j}-\hat{\lambda}_{k-1}^{j})\\
= & \hat{\Theta}_{0} - \hat{B}_{0} + \sum_{k=1}^{n}\sum_{j\in \J}\frac{\widehat{(\tilde{\Theta}\lambda^j)}_{k-1}}{\hat{\lambda}_{k-1}^{j}}(\epsilon_{k}^{j}-\hat{\lambda}_{k-1}^{j}).
\end{aligned}
\end{align*}
We compute 
$\hat{\Theta}_{n}-\hat{\Theta}_{n-1}
=\hat{B}_{n}-\hat{B}_{n-1} + f(\lambda_{n-1},\epsilon_{n},\tilde{\Theta}_{n-1})
=\E[a_{n}|\cF_{n-1}^N] + f(\lambda_{n-1},\epsilon_{n},\tilde{\Theta}_{n-1})$.
From (\ref{eq:ak}), 
$\E[a_{n}|\cF^N_{n-1}]=\E[\Theta_{n}|\cF_{n-1}^N]-\hat{\Theta}_{n-1}$ 
and using that
\begin{align*}
\begin{aligned}
f(\lambda_{n-1},\epsilon_{n},\tilde{\Theta}_{n-1}) =\sum_{j\in \J}\frac{\widehat{(\tilde{\Theta}\lambda^j)}_{n-1}}{\hat{\lambda}_{n-1}^{j}}(\epsilon_{n}^{j}-\hat{\lambda}_{n-1}^{j})
=\sum_{j\in \J}\frac{\widehat{(\tilde{\Theta}\lambda^{j})}_{n-1}}{\hat{\lambda}_{n-1}^{j}}\mathds{1}_{[\Delta N_{n}=j]}-\E[\Theta_{n}|\cF_{n-1}^N],
\end{aligned}
\end{align*}
we deduce the final form of the filtering formula 
$$
\hat{\Theta}_{n}=\sum_{j\in \J}\frac{\widehat{(\tilde{\Theta}\lambda^{j})}_{n-1}}{\hat{\lambda}_{n-1}^{j}}\mathds{1}_{[\Delta N_{n}=j]}.
$$
Note that, from \eqref{eq:ak}, the previous formula can also be stated as
$$
\hat{\Theta}_{n}=\sum_{j\in \J}\left(\frac{\widehat{(a\lambda^{j})}_{n-1}}{\hat{\lambda}_{n-1}^{j}}+\frac{\widehat{(\Theta\lambda^{j})}_{n-1}}{\hat{\lambda}_{n-1}^{j}}\right)\mathds{1}_{[\Delta N_{n}=j]}.
$$
where $\widehat{(a\lambda^{j})}_{n-1} = \E[a_n \lambda_{n-1}^{j}  |\cF_{n-1}^N]$.
\end{proof}
This formula can be extended to a multivariate setting. 
\begin{remark}
\label{rem:other-proof}
Note that the filtering formula of Proposition \ref{prop:discreet-univariate-rating} derives from this general equation. Indeed, we have
$$
\E(\mathds{1}_{[\Delta N_{n}=j]}|\cF_{n-1})=\E(\mathds{1}_{[\Delta N_{n}=j]}|N_{n-1}, \Theta_{n-1},Y_{n}),
$$
and
$$
\Prob(\Delta N_{n}=j|\Theta_{n-1}=h,Y_{n}=y_{n})=\binom{y_{n}}{j}(L^{h})^{j}(1-L^{h})^{y_{n}-j}.
$$
Then we compute the following expressions 
$$
\hat{\lambda}_{n-1}^{j}=\E\left[\mathds{1}_{[\Delta N_{n}=j]}|\cF_{n-1}^N\right]=\sum_{h=1}^m\binom{Y_{n}}{j}(L^{h})^{j}(1-L^{h})^{Y_{n}-j}\hat{I}_{n-1}^{h},
$$
$$
\widehat{(I^{h}\lambda^j)}_{n-1}=\binom{Y_{n}}{j}(L^{h})^{j}(1-L^{h})^{Y_{n}-j}\hat{I}_{n-1}^{h}
$$
and
\begin{align*}
\begin{aligned}
\widehat{(\tilde{I}^h\lambda^{j})}_{n-1}& =\E\left[\E[I_{n}^h|\cF_{n-1}]\E[\mathds{1}_{[\Delta N_{n}=j]}|\cF_{n-1}]|\cF_{n-1}^N\right]\\
& =\E\left[(\sum_{\mu=1}^m K^{\mu h}I_{n-1}^{\mu})(\sum_{s=1}^m\binom{Y_{n}}{j}(L^{s})^{j}(1-L^{s})^{Y_{n}-j}I_{n-1}^{s})|\cF_{n-1}^N\right]\\
& =\sum_{s=1}^m K^{s h}\binom{Y_{n}}{j}(L^{s})^{j}(1-L^{s})^{Y_{n}-j}\hat{I}_{n-1}^{s}.
\end{aligned}
\end{align*}
Finally using (\ref{eq:discreet-filter}) we obtain the desired equation.
\end{remark}

\subsection{Multivariate form}

Let $\Gamma \in \mathbb{N}$ be the discrete time horizon. We work with the filtered probability space $(\Omega,\bfA,\bfF = (\cF_n)_{n \in \{0,\ldots,\Gamma\}},\Prob)\,,
$.
Let $N = (N^1, \ldots, N^\rho)$ be a discrete-time multivariate counting process where for $i=1, \ldots, \rho$, $N^i=(N_n^i)_{n \in \{0,\ldots,\Gamma\}}$, is a be a 
simple $\bfF$-adapted counting processes starting from $0$.
Let $\bfF^N$ be the natural filtration of $N$, augmented with $\Prob-$null sets. We write
$$
 \forall \ i \in \{1,\ldots,\rho\}, \ \forall \ n \in \{1,\ldots,\Gamma\}, \ \Delta N_{n}^i=N_{n}^i-N_{n-1}^i.
$$
We denote by $\J_i$ the support of the jumps of $N^i$, for $i=1, \ldots, \rho$. The support may vary over time but as this would not impact our results, it is assumed constant for the sake of clearness. To account for the times when $N^i$ does not jump, we assume that $0 \in \J_i$ and we define $\bar{\J}_i=\J_i\backslash\{0\}$ as the support of the true jumps. Let us define the product spaces $\J^{\otimes}=\prod_{i=1}^\rho \J_i$ and $\bar{\J}^{\otimes}=\prod_{i=1}^\rho \bar{\J}_i$.
\\
Let $\Theta$ be a square integrable $\bfF$-adapted process such that conditionally on the $\sigma$-field $\cF_{n-1}$, $\Theta_n$ and $(\Delta N_n^i)_{i=1, \ldots, \rho}$ are independent. Let $\bfF^{\Theta}$ be the natural filtration of $\Theta$,  augmented with $\Prob-$null sets.
We define the filtration $\bfG=(\cG_n)_{n \in \{0,\ldots,\Gamma\}}$  by $\cG_n=\cF^{\theta}_n \vee \cF^N_{n-1}$.
\\
We extend the previous setting to multivariate case,
$$
\forall \delta \in \J^{\otimes}, 
\epsilon_{n}^{\delta}=\mathds{1}_{[\Delta N_{n}=\delta]}=\mathds{1}_{[\bigcap_{i=1}^{\rho} \Delta N^i_{n}=\delta_i]}, \
\lambda_{n-1}^{\delta}=\E[\epsilon_{n}^{\delta}|\cF_{n-1}].
$$
\begin{proposition}
The filtered process $\hat{\Theta}$ satisfies for all $n=1, \dots, \Gamma$,
\begin{equation}
\label{eq:multivariate-discreet-filter}
\hat{\Theta}_{n}=\sum_{\delta \in \J^{\otimes}}\frac{\widehat{(\tilde{\Theta}\lambda^{\delta})}_{n-1}}{\hat{\lambda}_{n-1}^{\delta}}\mathds{1}_{[\Delta N_{n}=\delta]}.
\end{equation}
\end{proposition}
\begin{proof}
We leave the proof to the reader as it goes along the same lines as the proof of Prop. \ref{prop}.
\end{proof}
\begin{remark}
With adequate assumptions, this filtering formula could cover non Markovian case. These considerations are left for future research. 
\end{remark}
\section{Conclusion}
\label{sec:conclusion}
In this paper, we adapt the filtering framework applied to Markov chains, to the context of rating migrations, in both a discrete-time and a continuous-time setting. For both approaches, we assume that rating transitions in a pool of obligors are driven by the same systematic hidden factor. The two alternatives are studied and compared. We discussed calibration issues and compared the predicted future rating transition probabilities on fictive and real data. Then, under a point process filtering framework, we extend the discrete-time Markov chain filtering framework to a more general filtering one, which may cover non Markovian behaviours.  
\\
As illustrated in Sections \ref{sec:fictiveData} and \ref{sec:realData}, our methodology provides predictors adapted to the evolution of the economical cycle. We believe that our approaches can be used for PIT-estimations of transitions and detection of regimes. During crisis periods, our models are able to predict higher downgrade probabilities. Compared to other PIT-estimation models, our approaches base their predictions on the business cycle without concern of macro economic factors.
From a practical point of view, our approaches also have the advantage to be interpretable. Observing the risk profile of each state and their filtered trajectories allows us to better understand the dynamics of the economic cycle as well as its systematic effect on rating migrations.

However, both approaches cannot capture idiosyncratic information: indeed \cite{schwaab2017global} found that only 18\% to 26\% of global default risk variation is systematic while the reminder is idiosyncratic. The share of systematic default risk is higher (39\% to 51\%) if industry-specific variation is counted as systematic. 
\\
In addition, applying the continuous framework to discrete time data is tedious and presents a risk of altering information and the quality of predictions. 
Since its complexity is much more important, the execution results of the continuous-time algorithm is very slow. Therefore, it suffers from poorer calibration than the discrete-time version.
Thanks to the adaptations presented in Section \ref{outOfSample}, the continuous-time version is able to provide satisfactory predictions but can not reach the performances of the discrete-time model. For these reasons, the discrete-time approach turns out to be more adapted and efficient for the context of rating migrations.   
\\
\\
Several improvements could be made to our framework. Both models could consider additional idiosyncratic observable factors as in \cite{koopman2008multi}. Moreover, the continuous-time framework could be improved by using an EM algorithm which estimates continuous time parameters directly (as, e.g., in \cite{damian2018algorithm}, \cite{liu2017learning} and \cite{nodelman2012expectation}). This algorithm will still have to deal with simultaneous jumps and be computationally tractable.   
Furthermore, many studies show that rating migrations' dynamics first exhibit a non-Markovian behavior (migration data exhibit correlation among rating change dates, known as “rating drift”, contagion effect, \ldots) that cannot be captured by our models. The integration of these effects may represent a subject of reflection. Finally, the general derived discrete-time filtering formula could be applied to the context of rating migrations, possibly driven by non-Markovian hidden process. These considerations are left for future researches.

\section*{Acknowledgments}
The authors are immensely grateful to Ragnar Norberg who significantly contributes to early stages of this work.
 We also want to acknowledge Baye Matar Kandji who greatly improves the Baum-Welch type calibration algorithm.   
This work was carried out as part of a PhD thesis ``CIFRE''. The authors thank Nexialog Consulting for its support.  
\appendix

\section{Calibration of the discrete Version}
\label{proof:calibration-discreet}
This part describes the derivation for the Baum-Welch algorithm adaptation presented in Section \ref{sec:calibration}.
\\
We compute $\forall s \in \{1,\ldots, m\}, \forall n \in \{1,\ldots,\Gamma\}$, the forward probability
$\alpha_{n}(s)=\Prob(Z_{0|n}=z_{0|n},\Theta_{n-1}=s)$ and $\forall s \in \{1,\ldots,m\}, \forall n \in \{1,\ldots,\Gamma-1\}$, the backward probability $\beta_{n}(s)=\Prob(Z_{n+1|\Gamma}=z_{n+1|\Gamma}|Z_{n}=z_{n},\Theta_{n-1}=s)$.
\\
We derive $\forall n \in \{2,\ldots,\Gamma-1\} \ \mbox{and} \ \forall s \in \{1,\ldots,m\}$,
\begin{align*}
\begin{aligned}
\alpha_{n}(s)& =\Prob(Z_{0|n}=z_{0|n},\Theta_{n-1}=s)=\sum_{l=1}^m\Prob(Z_{0|n}=z_{0|n},\Theta_{n-1}=s,\Theta_{n-2}=l)\\
& =\sum_{l=1}^m\prod_{d=1}^Q\Prob(Z_{n}^{d}=z_{n}^{d}|Z_{0|n-1}=z_{0|n-1},\Theta_{n-1}=s,\Theta_{n-2}=l)\Prob(Z_{0|n-1}=z_{0|n-1},\Theta_{n-1}=s,\Theta_{n-2}=l)\\
& =\sum_{l=1}^m\prod_{d=1}^Q\Prob(Z_{n}^{d}=z_{n}^{d}|Z_{n-1}=z_{n-1},\Theta_{n-1}=s)\Prob(\Theta_{n-1}=s|\Theta_{n-2}=l,Z_{0|n-1}=z_{0|n-1})\alpha_{n-1}(l)\\
& =\sum_{l=1}^m\prod_{d=1}^Q\Prob(Z_{n}^{d}=z_{n}^{d}|Z_{n-1}^{d}=z_{n-1}^{d},\Theta_{n-1}=s)\Prob(\Theta_{n-1}=s|\Theta_{n-2}=l)\alpha_{n-1}(l)\\
& =\sum_{l=1}^m \alpha_{n-1}(l) K^{ls}\prod_{d=1}^Q L^{s,z_{n-1}^{d}z_{n}^{d}}=\sum_{l=1}^{m}\alpha_{n-1}(l)K^{ls}\prod_{i,r \in {\Upsilon}}(L^{s,ir})^{\Delta N_n^{ir}}.
\end{aligned}
\end{align*}
For n=1, we do not know the state of the individuals before the simulation. 
To tackle this issue, we use the initial proportion of the ratings. We have
\\
\begin{align*}
\begin{aligned}
\alpha_{1}(s)& =\Prob(Z_{1}=z_{1},\Theta_{0}=s)\\
& =\prod_{d=1}^Q\Prob(Z_{1}^{d}=z_{1}^{d}|\Theta_{0}=s)\Pi(s)\\
& =\prod_{d=1}^Q\sum_{j \in\Upsilon}\Prob(Z_{1}^{d}=z_{1}^{d}|Z_{0}^{d}=j,\Theta_{0}=s)\Prob(Z_{0}^{d}=j|\Theta_{0}=s)\Pi(s)\\
& =\prod_{d=1}^Q\sum_{j\Upsilon}\Prob(Z_{1}^{d}=z_{1}^{d}|Z_{0}^{d}=j,\Theta_{0}=s)\Prob(Z_{0}^{d}=j)\Pi(s)\\
& =\prod_{d=1}^Q\sum_{j\in \Upsilon}L^{s,jz_{1}^{d}}\Prob(Z_{0}^{d}=j)\Pi(s).\\
\end{aligned}
\end{align*}
\\
\\
Similarly, we recursively derive the backward probability for all $n \in \{1,\ldots,\Gamma-2\}$ and $s \in \{1,\ldots,m\}$,
\begin{align*}
\begin{aligned}
\beta_{n}(s)& =\Prob(Z_{n+1|\Gamma}=z_{n+1|\Gamma}|\Theta_{n-1}=s,Z_{n}=z_{n})\\\\
& =\sum_{l=1}^m\Prob(Z_{n+1|\Gamma}=z_{n+1|\Gamma},\Theta_{n}=l|\Theta_{n-1}=s,Z_{n}=z_{n})\\
& =\sum_{l=1}^m\Prob(Z_{n+2|\Gamma}=z_{n+2|\Gamma}|\Theta_{n}=l,Z_{n+1}=z_{n+1},Z_{n}=z_{n},\Theta_{n-1}=s)\Prob(Z_{n+1}=z_{n+1},\Theta_{n}=l|\Theta_{n-1}=s,Z_{n}=z_{n})\\
& =\sum_{l=1}^m\Prob(Z_{n+2|\Gamma}=z_{n+2|\Gamma}|\Theta_{n}=l,Z_{n+1}=z_{n+1})\prod_{d=1}^Q(\Prob(Z_{n+1}^{d}=z_{n+1}^{d}|\Theta_{n}=l,Z_{n}^{d}=z_{n}^{d}))\Prob(\Theta_{n}=l|\Theta_{n-1}=s)\\
& =\sum_{l=1}^m\beta_{n+1}(l)K^{sl}\prod_{d=1}^Q L^{l,z_{n}^{d}z_{n+1}^{d}}=\sum_{l=1}^{m}\beta_{n+1}(l)K^{sl}\prod_{i,r \in {\Upsilon}}(L^{l,ir})^{\Delta N_{n+1}^{ir}}.
\end{aligned}
\end{align*}
For $n=\Gamma-1$, we take $\forall s \in \{1,\ldots,m\}, \ \beta_{\Gamma-1}(s)=1$.
\\
\\
Both estimators will be used to replace the missing data during the maximization phase. The missing data describing the hidden factor are defined $\forall h\in \{1,\ldots,m\}, \forall n \in \{1,\ldots,\Gamma\}$, $u_{n}(h)=\mathds{1}_{[\Theta_{n}=j]}$, and
$v_{n}(s,h)=\mathds{1}_{[\Theta_{n}=h,\Theta_{n-1}=s]}$. We define the associated Bayesian estimators $\check{u}_{n}(h)=\Prob(\Theta_{n}=h|Z_{0|\Gamma}=z_{0|\Gamma}),
\label{eq:u}$ and $\check{v}_{n}(s,h)=\Prob(\Theta_{n}=h,\Theta_{n-1}=s|Z_{0|\Gamma}=z_{0|\Gamma}).
\label{eq:v}$
\\
We derive expression of these Bayesian estimators with the forward and the backward probabilities For all $h \in \{1,\ldots,m\}$ and all $n \in \{1,\ldots,\Gamma-2\}$,
\begin{align*}
\begin{aligned}
\check{u}_{n}(h)& =\Prob(\Theta_{n}=h|Z_{0|\Gamma}=z_{0|\Gamma})\\
& =\frac{\Prob(Z_{n+2|\Gamma}=z_{n+2|\Gamma}|\Theta_{n}=h,Z_{0|n+1}=z_{0|n+1})\alpha_{n+1}(h)}{\Prob(Z_{0|\Gamma}=z_{0|\Gamma})}\\
& =\frac{\Prob(Z_{n+2|\Gamma}=z_{n+2|\Gamma}|\Theta_{n}=h,Z_{n+1}=z_{n+1})\alpha_{n+1}(h)}{L_{\Gamma}}\\
& =\frac{\beta_{n+1}(h)\alpha_{n+1}(h)}{L_{\Gamma}}.
\end{aligned}
\end{align*}
With $L_{\Gamma}$ being the likelihood on the whole sample,
$
 L_{\Gamma}=\Prob(Z_{0|\Gamma}=z_{0|\Gamma})=\sum_{j}\alpha_{\Gamma}(j).\\
$
\begin{align*}
\begin{aligned}
&\check{v}_{n}(s,h) = \Prob(\Theta_{n}=h,\Theta_{n-1}=s|Z_{0|\Gamma}=z_{0|\Gamma})\\
& =\frac{\Prob(\Theta_{n}=h,Z_{n+1|\Gamma}=z_{n+1|\Gamma}|Z_{0|n}=z_{0|n},\Theta_{n-1}=s)\alpha_{n}(s)}{L_{\Gamma}}\\
& =\frac{\Prob(Z_{n+2|\Gamma}=z_{n+2|\Gamma}|Z_{0|n+1}=z_{0|n+1},\Theta_{n-1}=s,\Theta_{n}=h)\Prob(\Theta_{n}=h,Z_{n+1}=z_{n+1}| Z_{0|n}=z_{0|n}, \Theta_{n-1}=s)\alpha_{n}(s)}{L_{\Gamma}}\\
& =\frac{\Prob(Z_{n+2|\Gamma}=z_{n+2|\Gamma}|Z_{n+1}=z_{n+1},\Theta_{n}=h)\prod_{d=1}^Q(\Prob(Z_{n+1}^{d}=z_{n+1}^{d}|\Theta_{n}=h,Z_{n}^{d}=z_{n}^{d}))\Prob(\Theta_{n}=h|\Theta_{n-1}=s)\alpha_{n}(s)}{L_{\Gamma}}\\
& =\frac{\beta_{n+1}(h)K^{sh}\alpha_{n}(s)\prod_{d=1}^Q L^{h,z_{n}^{d}z_{n+1}^{d}}}{L_{\Gamma}}=\frac{\beta_{n+1}(h)K^{sh}\alpha_{n}(s)\prod_{i,r \in {\Upsilon}}(L^{h,ir})^{\Delta N_n^{ir}}}{L_{\Gamma}}.
\end{aligned}
\end{align*}
Using the concavity of the $\log$ function, we establish a useful inequality for the next step of the derivations.
For two strictly positive sequences $w$ and $w'$,
\begin{align*}
\begin{aligned}
\log\left(\frac{\sum_{i}w_{i}'}{\sum_{k}w_{k}}\right)& =\log\left(\sum_{i}\frac{w_{i}w_{i}'}{\sum_{k}w_{k}w_{i}}\right)\\
& \geq \sum_{i}\frac{w_{i}}{\sum_{k}w_{k}}\log(w_{i}')-\frac{w_{i}}{\sum_{k}w_{k}}\log(w_{i})\\
& =\frac{1}{\sum_{k}w_{k}}\left(\sum_{i}(w_{i}\log(w_{i}')-w_{i}\log(w_{i}))\right).\\
\end{aligned}
\end{align*}
The maximization step consists in finding better parameters than those of the previous iteration.
We call 
$M^{(\gamma)}=(\Pi^{(\gamma)},L^{(\gamma)},K^{(\gamma)})$, the parameters of the current iteration $(\gamma)$.
\\
We are seeking new parameters
$M^{(\gamma+1)}=(\Pi^{(\gamma+1)},L^{(\gamma+1)},K^{(\gamma+1)})$.
\\
Let consider the finite spaces $\Xi_k=\{1,\ldots,m\}^k, k \in \{1,\ldots,\Gamma\}$. We call a possible trajectory of $\Theta$, $\theta$, belonging to the finite space $\Xi_\Gamma$.
Using $
w_{\theta}=\Prob(Z=z,\Theta=\theta|M^{(\gamma)})$ and $w_{\theta}'=\Prob(Z=z,\Theta=\theta|M^{(\gamma+1)})$ in the previous inequality and defining
$Q(M^{(\gamma)},M^{(\gamma+1)})=\sum_{\theta \in \Xi_\Gamma}w_{\theta}\log(w_{\theta}')$
and 
$Q(M^{(\gamma)},M^{(\gamma)})=\sum_{\theta \in \Xi_\Gamma}w_{\theta}\log(w_{\theta})$, we obtain
\begin{align*}
\begin{aligned}
\log\left(\frac{\sum_{\theta \in \Xi_\Gamma}w_{\theta}'}{\sum_{\theta \in \Xi_\Gamma}w_{\theta}}\right)& =\log\left(\frac{\Prob(Z=z|M^{(s+1)})}{\Prob(Z=z|M^{(\gamma)})}\right)\\
& \geq\frac{1}{\Prob(Z=z|M^{(\gamma)})}(Q(M^{(\gamma)},M^{(\gamma+1)})-Q(M^{(\gamma)},M^{(\gamma)})).\\
\end{aligned}
\end{align*}
This last inequality shows that we obtain
$\Prob(Z=z|M^{(\gamma+1)})\geq \Prob(Z=z|M^{(\gamma)})$
by maximizing 
\[Q(M^{(\gamma)},M^{(\gamma+1)}) =\sum_{\theta \in \Xi_\Gamma} \Prob(\Theta=\theta, Z=z|M^{(\gamma)})\log\Prob(\Theta=\theta,Z=z|M^{(\gamma+1)}).\]
We cut $\log(\Prob(Z=z,\Theta=\theta|M^{(\gamma+1)})) =\log(\Prob(Z=z|\Theta=\theta,M^{(\gamma+1)}))+\log(\Prob(\Theta=\theta|M^{(\gamma+1)}))$. Since the processes $(Z^{d})_{d}$ are independent knowing the unobserved factor, we have
\begin{align*}
\begin{aligned}
\log(\Prob(Z=z,\Theta=\theta|M^{(\gamma+1)}))& =\log(\Prob(\theta_{0})) + \sum_{n=1}^\Gamma\log(\Prob(\theta_{n}|\theta_{n-1}))+\sum_{d=1}^Q\log(\Prob(z_{0|\Gamma}^{d}|\theta))\\
& =\log(\Prob(\theta_0)) + \sum_{n=1}^\Gamma\log(\Prob(\theta_{n}|\theta_{n-1}))+\sum_{d=1}^Q\sum_{n=1}^\Gamma\log\Prob(z_{n}^{d}|z_{n-1}^{d},\theta_{n-1}).\\
\end{aligned}
\end{align*}
So,
\begin{align*}
\begin{aligned}
Q(M^{(\gamma)},M^{(\gamma+1)})& =\sum_{\theta \in \Xi_\Gamma}\log(\Prob(\theta_{0}))\Prob(\theta,z|M^{(\gamma)}) +\sum_{\theta \in \Xi_\Gamma}\sum_{n=1}^\Gamma\log(\Prob(\theta_{n}|\theta_{n-1}))\Prob(\theta,z|M^{(\gamma)})\\
& +\sum_{\theta \in \Xi_\Gamma}\sum_{d=1}^Q\sum_{n=1}^\Gamma\log\Prob(z_{n}^{d}|z_{n-1}^{d},\theta_{n-1})\Prob(\theta,z|M^{(\gamma)})\\\\
& =\sum_{\substack{\theta \setminus \theta_{0}\\ \in \Xi_{\Gamma-1}}}\sum_{h=1}^m\log(\Prob(\Theta_{0}=h))\Prob(\Theta_{0}=h,\theta \setminus \theta_{0},z|M^{(\gamma)})\\
& + \sum_{n=1}^\Gamma\sum_{\substack{\theta \setminus (\theta_{n},\theta_{n-1}) \\ \in \Xi_{\Gamma-2}}}\sum_{h,s=1}^m\log(\Prob(\Theta_{n}=h|\Theta_{n-1}=s))\Prob(\theta \setminus (\theta_{n},\theta_{n-1}),\Theta_{n}=h,\Theta_{n-1}=s,z|M^{(\gamma)})\\
& +\sum_{n=1}^\Gamma\sum_{\substack{\theta \setminus \theta_{n-1}\\\in\Xi_{\Gamma-1}}}\sum_{s=1}^m\sum_{d=1}^Q\log(\Prob(z_{n}^{d}|z_{n-1}^{d},\Theta_{n-1}=s))\Prob(\theta \setminus \theta_{n-1},\Theta_{n-1}=s,z|M^{(\gamma)})\\\\
& =\sum_{h=1}^m\log(\Prob(\Theta_{0}=h,z|M^{(\gamma)}))\Pi^{h}+\sum_{n=1}^\Gamma\sum_{h,s=1}^m\log(K^{sh})\Prob(\Theta_{n}=h,\Theta_{n-1}=s,z|M^{(\gamma)})\\
& +\sum_{d=1}^Q\sum_{n=1}^\Gamma\sum_{s=1}^m\sum_{i,r\in \Upsilon}\log(L^{h,ir})\Prob(\Theta_{n-1}=s,z|M^{(\gamma)})\mathds{1}_{[Z_{n}^{d}=r,Z_{n-1}^{d}=i]}.\\
\end{aligned}
\end{align*}
\\
\\
Then, we can maximize by considering the three terms independently. 
We obtain
\begin{align*}
\begin{aligned}
\Pi^{h}=\frac{\Prob(\Theta_{0}=h,Z|M^{(\gamma)})}{\sum_{j=1}^m\Prob(\Theta_{0}=j,Z|M^{(\gamma)})}=\Prob(\Theta_{1}=h|Z,M^{(\gamma)})=\check{u}_{0}(h),
\end{aligned}
\end{align*}
\begin{align*}
\begin{aligned}
L^{s,ir}& =\frac{\sum_{d=1}^Q\sum_{n=1}^\Gamma\Prob(\Theta_{n-1}=s,Z|M^{(\gamma)})\mathds{1}_{[Z_{n}^{d}=r,Z_{n-1}^{d}=i]}}{\sum_{d=1}^Q\sum_{n=1}^\Gamma\sum_{j\in \Upsilon}\Prob(\Theta_{n-1}=s,Z|M^{(\gamma)})\mathds{1}_{[Z_{n}^{d}=j,Z_{n-1}^{d}=i]}}
& =\frac{\sum_{n=1}^{\Gamma}\check{u}_{n-1}(s)\Delta N_n^{ir}}{\sum_{n=1}^{\Gamma}\check{u}_{n-1} (s)Y_n^i},
\end{aligned}
\end{align*}
\begin{align*}
\begin{aligned}
K^{sh}& = \frac{\sum_{n=1}^\Gamma\Prob(\Theta_{n-1}=s,\Theta_{n}=h,Z|M^{(\gamma)})}{\sum_{n=1}^\Gamma\sum_{j=1}^Q\Prob(\Theta_{n-1}=s,\Theta_{n}=j,Z|M^{(\gamma)})}
& =\frac{\sum_{n=1}^\Gamma\check{v}_{n}(s,h)}{\sum_{n=1}^\Gamma\check{u}_{n-1}(s)}.
\end{aligned}
\end{align*}

\section{Continuous-time version of the filter}
\label{ap:continuous-filter}
\subsection{Framework and statements}
\label{setting:continuous-filter}
Let $(\Omega,\bfA,\bfF = (\cF_t)_{t \in [0,T]},\Prob)$,
 be a filtered probability space satisfying the ``usual conditions'' of right-continuity and completeness needed to justify all operations to be made. All stochastic processes encountered are assumed to be adapted to the filtration $\bfF$ and integrable on $[0,T]$. In particular, we have $\bfA=\cF_{T}$.
The time horizon $T$ is supposed to be finite.
\\
\\
Let $N = (N^1, \ldots, N^\rho)$ be a multivariate counting process where $N^i=(N_t^i)_{t \in [0,T]}$, $i=1, \ldots, \rho, $ is a set of
simple counting processes, such that,
$N_t^i =\sum_{0<s \leq t} \Delta N_s^i< \infty$ and $\Delta N_s^i \in \{0,1 \}$, for any $i=1, \ldots, \rho$. It is assumed that these processes admit a predictable $\bfF$ - intensity
$\nu^i =(\nu_t^i)_{t\in [0,T]}$, and that they do not have any common jumps, i.e.,  $\Delta N_t^i \Delta N_t^j =\delta_{ij}\Delta N_t^i$ ie $[N^i,N^j]_{t}=0$ (the continuous martingale part of a counting process being null).  We introduce $\bfF^N = (\cF_t^N)_{t \in [0,T]}$ the natural filtration of the multivariate counting process $N=(N^1,\dots,N^\rho)$.
\\
Let $\Theta$ be a square integrable process of the form
\begin{equation}
\Theta_t = \int_0^t a_s \, \diff s + M_t \,,
\label{eq:Theta_t}
\end{equation}
where $a$ is a $\bfF$-adapted process and $M$ is a square integrable $\bfF$-martingale. We assume that $\Theta$ and $\Delta N^i$ have no common jumps.  
Let $\bfF^{\Theta}$ be the natural filtration of $\Theta$,  augmented with $\Prob-$null sets.. 
The problem is to estimate the states of the unobserved process $\Theta$ using only the information $ \bfF^N$, 
resulting from the observation of the multivariate counting process $ N $. By definition of the conditional expectation,
\[\hat{\Theta}_t = \E [\Theta_t | \cF_t^N]\,.
\]
is the $L^2$ approximation of $\Theta$ knowing $N$.
With the same notation, all the processes $O$ filtered  by $ \cF_t^N $ is written 
\[\hat{O}_t = \E [O_t | \cF_t^N].\]
The main result on univariate point process filtering can be stated in the following way (see \cite{Bremaud1981}, \cite{Karr1991}, \cite{Leijdekker_Spreij2008}, \cite{vanSchuppen1997}). 
The following proposition can be found in \cite{Bremaud1981} but the different are expressed in terms of change measure and are not explicit. Although his result is valid with simultaneous jumps between a counting process $N^i$ and the hidden factor $\Theta$, this leads to an extra term which cannot be computed in practice. We aimed at obtaining an implementable formula so we had to forbid simultaneous jumps. For the sake of completeness, we provide a self contained proof yielding the explicit filtering formula with no simultaneous jumps.
\subsection{General filtering equation}
\begin{proposition}
\label{Prop:filtering_multPP}
The process $\hat{\Theta}$ is solution of the SDE
\begin{equation}
\diff \hat{\Theta}_t = \hat{a}_t \, \diff t + \sum_{j=1}^\rho \eta_t^j \, (\diff N_t^j - \hat{\nu}_{t-}^j \, \diff t)\,,
\label{eq:dhatTheta}
\end{equation}
with
\begin{equation}
\eta_t^j = \frac{ \widehat{ (\Theta\,\nu^j )}_{t-} } { \hat{\nu}_{t-}^j} - \hat{\Theta}_{t-} \,,
\label{eq:eta^j}
\end{equation}
and initial condition
\begin{equation}
\hat{\Theta}_0 = \E [\Theta_0].
\label{eq:hatTheta_0}
\end{equation}
\end{proposition}
\begin{proof}[of Prop. \ref{Prop:filtering_multPP}]
 Let $g$ and $h$ be two $\bfF$ predictable processes such that $\E \left[ \int_0^T (g_s^2+ h_s^2)  \, \nu_s^j \, \diff s \right] \leq \infty$. We introduce the processes $X$ and $Y$ defined by
$X_t=\int_0^t g_{r} (\diff N_r^j - \nu_r^j \, \diff r)$ and $Y_{t}=\int_0^t h_s (\diff N_s^k - \nu_s^k \, \diff s)$ for all $t \le T$. X and Y are two $\bfF$--martingales.
The Itô formula applied to $XY$ yields 
\begin{align*}
\label{int-part}
\diff(X_tY_t) = & X_{t-}\diff Y_{t} + Y_{t-}\diff X_{t} + \Delta X_t \Delta Y_{t}.
\end{align*}
Since $N^j$ and $N^k$ have no common jumps,
$
\Delta X_t \Delta Y_{t}=g_{t}h_{t}\Delta N_{t}^j \delta_{jk}.
$
Then,  we obtain
\begin{equation}
\label{eq1}
\E[X_{T}Y_{T}- X_{t}Y_{t}|\cF_{t}]=\delta_{jk}\E\left[\int_{t}^T g_{s}h_s
\diff N^j_{s} |F_{t}\right].
\end{equation}
Note that $X_T Y_T-X_t Y_t=(X_{T}-X_{t})(Y_{T}-Y_{t})-2X_{t}Y_{t}+X_{t} Y_{T} + X_{T} Y_{t}$. Then, 
\begin{equation}
\label{eq2}
\E[X_{t}Y_{T}-X_{t}Y_{t}|\cF_{t}]=\E[(X_{T}-X_{t})(Y_{T}-Y_{t})|\cF_{t}].
\end{equation}
Note that the process $Z=\left(\int_{0}^t g_{s}h_{s}(\diff N^j_{s}-\nu^j_{s}\diff{s})\right)_{0 \le t \le T}$ is a $\bfF$--martingale.
\\ 
So, $\E\left[Z_{T}-Z_{t}|\cF_{t} \right]=0$. Combining this remark with Equations~\eqref{eq1} and \eqref{eq2}, we finally obtain \begin{equation}
\E \left[ \left. \int_t^T g_r (\diff N_r^j - \nu_r^j \, \diff r) \int_t^T h_s (\diff N_s^k - \nu_s^k \, \diff s) \right| \cF_t \right] = \delta_{jk} \, \E \left[ \left. \int_t^T g_s h_s \, \nu_s^j \, \diff s \right| \cF_t \right] .
\label{eq:EintNjintNk}
\end{equation}
The innovation theorem says that the $\bfF^N$-intensities of the counting processes $N^j$ exist and are
$$
\hat{\nu}_{s-}^j =\E \left[ \nu_{s-}^j \,|\, \cF_{s-}^N \right] =\E \left[ \nu_{s}^j \,|\, \cF_{s-}^N \right] \,.
\label{eq:innovation-rule}
$$
For any $\bfF^N$ predictable process $h$ satisfying $\E\left[\int_0^T |h_s \nu^j_s| ds \right]< \infty$, we have
\begin{align*}
\E \left[\int_{0}^{\infty} h_{s}\diff N^j_{s}\right]
=& \E \left[\int_{0}^{\infty} h_{s}\nu^j_{s} \diff s \right]
= \E \left[\int_{0}^{\infty} h_{s}\E \left[\nu^j_{s}|\cF^N_{s-}\right] \diff s\right]
= \E\left[\int_{0}^{\infty} h_{s}\hat{\nu}^j_{s-} \diff s\right].
\end{align*}
Now, rewrite (\ref{eq:Theta_t}) as
\begin{equation}
\Theta_t = \int_0^t \hat{a}_s\, \diff s + L_t + M_t \,,
\label{eq:Theta-rewrite}
\end{equation}
with
$$
L_t = \int_0^t (a_s - \hat{a}_s)\,ds  \,.
\label{eq:M^*}
$$
Taking conditional expectation, w.r.t. $\cF_t^N$, in (\ref{eq:Theta-rewrite}) yields
\begin{equation}
\hat{\Theta}_t = \int_0^t \hat{a}_s\,ds + \hat{L}_t + \hat{M}_t \,.
\label{eq:hat-Theta-repr}
\end{equation}
While $L$ need not be a $\bfF$--martingale, it is clear that $\hat{L}$ is an $\bfF^N$--martingale. For $t_1 < t_2$,
\begin{align*}
\E[\hat{L}_{t_2} - \hat{L}_{t_1}  \,|\, \cF_{t_1}^N ]= \E \left[ \left. \int_{t_1}^{t_2} \left( a_s - \E [ a_s | \cF_s^N ] \right) \,\diff s \right| \cF_{t_1}^N \right]=0. \\
\end{align*}
From the tower property, we deduce that $\hat{M}$ is also a $\bfF^N$--martingale.
Introduce
$$
K_t = L_t + M_t = \Theta_t - \int_0^t \hat{a}_s\,ds \,.
\label{eq:K}
$$
Since $\Theta$ and $N$ have no common jumps, we can deduce that $K$ and $N$ have any either. 
It has been shown that $\hat{K}= \hat{L} + \hat{M}$ is a $\bfF^N$--martingale. Therefore it has a predictable representation,
\begin{equation}
\hat{K}_t = \ga + \sum_j \int_0^t \eta_s^j \, (\diff N_s^j - \hat{\nu}_{s-}^j \, \diff s) ,
\label{eq:pred-repr}
\end{equation}
where $\ga = \hat{K}_0$ is $\cF_0^N$-measurable and the $\eta^j$ are $\bfF^N$-predictable processes (see \cite{Bremaud1981}). Note that $\hat K_0 = \E[\Theta_0]$.
Now, any integrable $\bfF^N$-measurable random variable has a representation $g + \sum_j \int_0^t h_s^j \, (\diff N_s^j - \hat{\nu}_s^j \, \diff s)$, with $g$ constant and the $h^j$ are $\bfF^N$-predictable. Therefore, since $\hat{K}_t$ is the $L^2$ projection of $K_t$ onto the space of square integrable $\cF_t^N$-measurable random variables, the coefficients in the representation (\ref{eq:pred-repr}) are uniquely determined by the normal equations
$$
\E \bigg[ \bigg( K_t - \ga - \sum_j \int_0^t \eta_s^j (\diff N_s^j - \hat{\nu}_{s-}^j \, \diff s) \bigg) \bigg( g + \sum_j \int_0^t h_s^j (\diff N_s^j - \hat{\nu}_{s-}^j \, \diff s)  \bigg) \bigg] = 0,
$$
for all constants $g$ and all $\bfF^N$-predictable processes $h^j$.  Setting $g = 0$ and using (\ref{eq:EintNjintNk}) give
$$
\label{eq:inserting}
\E \left[ K_t \, \sum_j \int_0^t h_s^j \, (\diff N_s^j - \hat{\nu}_{s-}^j \, \diff s) -
\sum_j h_s^j \, \eta_s^j \, \hat{\nu}_{s-}^j \, \diff s   \right] = 0 \,.
$$
For j $\in \{1, \dots,\rho\}$, we compute $\E \left[K_{t}\int_0^t h_s^j \diff N_s^j \right]$. Using that $\hat K$  is a $\bfF^N$--martingale and that $K$ and $N^j$ have no common jumps, we have
\begin{align*}
\begin{aligned}
\E \left[K_{t}\int_0^t h_s^j \diff N_s^j \right]
= & \E \left[K_{t}\sum_{s \leq t} h_s^j \Delta N_s^j \right]
= \sum_{s \leq t}\E \left[\E\left[K_{t}|\cF_{s}^N \right] h_s^j \Delta N_s^j \right]\\
= & \sum_{s \leq t}\E \left[\hat{K}_{s} h_s^j \Delta N_s^j \right] = \E \left[\int_0^t  K_{s-}h_{s}^j \diff N_s^j \right]\\
= & \E \left[\int_0^t K_{s-}h_{s}^j \nu_s^j \diff{s} \right]
=  \int_0^t \E \left[h_{s}^j \E\left[ K_{s-} \nu_s^j| \cF_{s-}^N \right] \right] \diff{s}\\
= & \E \left[\int_{0}^t h_{s}^j \widehat{\Theta \nu^j}_{s-} \diff s \right] - \E \left[\int_{0}^t h^j_{s} \hat{\nu}_{s-}^j\int_{0}^s \hat{a}_{u}\diff u  \ \diff s \right].
\end{aligned}
\end{align*}
Using similar arguments, we compute the second term 
\begin{align*}
\begin{aligned}
\E \left[K_{t}\int_{0}^t h^j_s \hat{\nu}^j_{s-} \diff s\right]
= &  \int_{0}^t \E \left[ h^j_s \E \left[{K}_{t}|\cF_{s-}^N\right]  \hat{\nu}^j_{s-} \right]\diff s 
=  \int_{0}^t \E \left[ h^j_s \hat{K}_{s-}  \hat{\nu}^j_{s-} \right]\diff s \\
= & \E\left[\int_{0}^t h^j_s \hat{\Theta}_{s-}\hat{\nu}^j_{s-} \diff s \right] - \E\left[\int_{0}^t h^j_s \hat{\nu}^j_{s-} \int_{0}^s\hat{a}_{u} \diff u \ \diff s \right]. 
\end{aligned}
\end{align*}
Inserting these expressions into (\ref{eq:inserting}), gives
\begin{align*}
\sum_j \E \left[ \int_0^t h_s^j \left( \widehat{{\Theta}\, \nu^j}_{s-} - \hat{\Theta}_{s-} \, \hat{\nu}_{s-}^j - \eta_s^j \,\hat{\nu}_{s-}^j \right) \, \diff s  \right] = 0.
\end{align*}
Choosing $h_s^j$ equal to the expression in the parentheses, gives $\sum_j \E \left[ \int_0^t (h_s^j)^2 \, \diff s  \right] = 0$ hence all $h^j$ vanish and $\forall \ j = 1,\ldots,\rho$:
\begin{equation}
\eta_s^j = \frac{ \widehat{ {\Theta}_{s-}\,\nu_{s-}^j } } { \hat{\nu}_{s-}^j} - \hat{\Theta}_{s-}.
\label{eq:eta}
\end{equation}
From (\ref{eq:pred-repr}), (\ref{eq:hat-Theta-repr}), and the equality $\hat{K} = \hat{L} + \hat{M}$, it follows that
$$
\hat{\Theta}_t = \E[\Theta_0] + \int_0^t \hat{a}_s\, \diff s + \sum_j \int_0^t \eta_s^j \, (\diff N_s^j - \hat{\nu}_{s-}^j \, \diff s)
$$
with the $\eta^j$ are given by (\ref{eq:eta}). 
This finishes the proof of the proposition. \hfill
\end{proof}

\subsection{Finite latent factor model and a credit risk application}
\label{sec:rating-continuous-demo}
\begin{proof}[of Prop. \ref{prop:continuous-rating}]
In order to apply Prop. \ref{Prop:filtering_multPP}, one needs to find the representation (\ref{eq:Theta_t}) for $I_t^h$. 
Let $\Psi^{rh}$, $r \neq h$, $r,h \in \T$, be the counting processes defined by
\begin{align*}
\Psi_t^{rh} = \sharp \{ s \in (0,t];\, \Theta_{s-} = r, \, \Theta_s = h \}\,.
\end{align*}
The starting point is the expression
\begin{align*}
I_t^h = I_0^h + \sum_{r; r \neq h} (\Psi_t^{rh} -  \Psi_t^{hr}) \,,
\label{ThetaIndicator}
\end{align*}
which comes from the obvious dynamics
\begin{align*}
dI_t^h = \sum_{r; r \neq h} (\diff \Psi_t^{rh} - \diff \Psi_t^{hr})
\end{align*}
The counting processes $\Psi^{rh}$ have intensities of the form $I_{t-}^r \, \ka^{rh}$. Reshaping the last expression as
\begin{align*}
I_t^h &= I_0^h + \int_0^t \sum_{r; r \neq h} ( I_{s-}^r \, \ka^{rh} - I_{s-}^h \, \ka^{hr} ) \, \diff s \\
&+ \int_0^t \sum_{r; r \neq h} \left[ (\diff \Psi_s^{rh} - I_{s-}^r \, \ka^{rh}\, \diff s)  - (\diff \Psi_s^{hr} - I_{s-}^h \, \ka^{hr} \, \diff s ) \right],
\end{align*}
shows that $I^h$ is of the form
\begin{align*}
I_t^h = \int_0^t a_s^h \, \diff s + M_t^h \,,
\end{align*}
with
\begin{equation}
a_t^h = \sum_{r; r \neq h} ( I_{t-}^r \, \ka^{rh} - I_{t-}^h \, \ka^{hr} ) = \sum_r \ka^{rh} \, I_{t-}^r
\label{eq:a}
\end{equation}
and $M^h$ is a martingale commencing at $M_0^h = I_0^h$.\\
Then the role of $a_t$ is taken by $a_t^h$ in (\ref{eq:a}), the role of $(\Theta \, \nu^{j})_{t-}$ is taken by 
\begin{align*}
(I^h\nu^{ij})_{t-}=I_{t-}^h \, Y_t^i \, \sum_r \ell^{r,ij} \, I_{t-}^r = Y_t^i \, \ell^{r,ij} \, I_{t-}^h \,,
\end{align*}
and the $\bfF^N$-intensities of $N$ are given in (\ref{sigmaFil}).
Inserting these expressions
into (\ref{eq:dhatTheta}), gives
$$
d\hat{I}_{t}^{h}=\sum_{r=1}^{m}k^{rh}\hat{I}_{t-}^{r}dt+\sum_{i\neq j}\left(\frac{l^{h,ij}\hat{I}_{t-}^{h}}{\sum_{r}l^{r,ij}\hat{I}_{t-}^{r}}-\hat{I}_{t-}^{h}\right)\left(dN_{t}^{ij}-Y_{t}^{i}\sum_{r=1}^{m}l^{r,ij}\hat{I}_{t-}^{r}dt\right)
$$
\end{proof}
This result may look similar to R4 of \cite[Sec.IV.1]{Bremaud1981} but we actually consider a more general framework. On the hand, we deal with an aggregated version over the entire portfolio of the multivariate process and on the other hand we take censorship into account though the processes of risk exposure $Y^i$.

\subsection{Calibration of the continuous version}
\label{proof:calibration-continuousBaum}

Here, we present the detailed computations of the adaptations of the calibration for the continuous filtering framework, presented in Section \ref{seq: continuous-adaptation}.
In practice the number of entities monitored over time may vary: either because some names appear or disappear or simply because of missing data. This happens when the data is missing, censored or when it is not appeared yet. We attribute the rating 0 to an entity in this case. Then it is clear that a transition involving the rating of censure 0, is assumed to be independent with the states of the hidden factor. Let consider the list of ratings $\bar{\Upsilon}= \{1, \ldots, p \}$ and ${\Upsilon}= \{0, \ldots, p \}$, the completed list of ratings. Note that the total number of entities observed on ${\Upsilon}$ is constant equal to Q. Let $Q_t$, be the number of entities which have their rating in $\bar{\Upsilon}$ (have a real rating) at time t.
\\
We propose a calibration algorithm which assumes that no more than one entity may jump at a given time step. In order to make the model identifiable while considering the impact of the size of the sample (which may evolve), we define an independent process $I$, with values in $\{0, \dots, Q\}$, which uniformly picks the entity that may jump. 
If $I$ picks an entity which is rated 0, (because not already rated or censored), we do not observe jumps. Otherwise the entity jumps according to the transition matrices $(L^{h})_{h}$. 
\\
\\
We have $\forall t \in \{0,\ldots,\Gamma\},\ (i,j) \in \bar{\Upsilon}^2,\ h \in \T,\ q \in \{0,\ldots,Q\}$
\[\Prob(Z_{t}^q=j|Z_{t-1}^q=i,I_{t-1}=q,\Theta_{t-1}=h)=L^{h,ij}.\]
For $z_t, z_{t-1} \in \Upsilon^Q$, we define $W_{t-1}^{h}=\Prob(Z_{t}=z_{t}|Z_{t-1}=z_{t-1},\Theta_{t-1}=h)$, where $Z_t=(Z_t^q)_{q\leq Q_t}$.
\\
We compute 
\begin{align*}
\begin{aligned}
W_{t-1}^{h}& =\sum_{d=1}^Q\Prob(Z_{t}=z_{t}|Z_{t-1}=z_{t-1},I_{t-1}=d,\Theta_{t-1}=h)\Prob(I_{t-1}=d)\\
& =\sum_{d=1}^{Q_{t-1}}\Prob(Z_{t}=z_{t}|Z_{t-1}=z_{t-1},I_{t-1}=d,\Theta_{t-1}=h)\Prob(I_{t-1}=d)\\
& +\sum_{d=Q_{t-1}+1}^{Q}\Prob(Z_{t}=z_{t}|Z_{t-1}=z_{t-1},I_{t-1}=d,\Theta_{t-1}=h)\Prob(I_{t-1}=d)
\end{aligned}
\end{align*}
Let focus on the first sum, describing the situation when the chosen entity has a rating at current time.
\begin{align*}
\begin{aligned}
\Prob(Z_{t}=z_{t}|Z_{t-1}=z_{t-1},I_{t-1}=d,\Theta_{t-1}=h)
& =\Prob(Z_{t}=z_{t}|Z_{t-1}=z_{t-1},I_{t-1}=d,\Theta_{t-1}=h)\mathds{1}_{[z_{t}^d=z_{t-1}^d, \forall l \neq d: z_{t}^l=z_{t-1}^l]}\\
& +\Prob(Z_{t}=z_{t}|Z_{t-1}=z_{t-1},I_{t-1}=d,\Theta_{t-1}=h)\mathds{1}_{[z_{t}^d\neq z_{t-1}^d, \forall l \neq d: z_{t}^l=z_{t-1}^l]}\\
& =L^{h,z_{t-1}^d z_{t}^d}\mathds{1}_{[z_{t}=z_{t-1}]}+L^{h,z_{t-1}^d z_{t}^d}\mathds{1}_{[z_{t}^d\neq z_{t-1}^d, \forall l \neq d: z_{t}^l=z_{t-1}^l]}\\
\end{aligned}
\end{align*}
For the second sum, we have: 
$
\Prob(Z_{t}=z_{t}|Z_{t-1}=z_{t-1},I_{t-1}=d,\Theta_{t-1}=h)=\mathds{1}_{[z_{t}=z_{t-1}]}.
$
\\
\\
So finally,
\begin{align*}
\label{adaptation}
W_{t-1}^{h}=(1-\frac{Q_{t-1}}{Q})\mathds{1}_{[z_{t}=z_{t-1}]}+\sum_{d=1}^{Q_{t-1}}\frac{1}{Q}\mathds{1}_{[|z_{t}-z_{t-1}|_{0}\leq 1]}\mathds{1}_{[\forall l \neq d \ z_{t}^{l}=z_{t-1}^{l}]}L^{h,z_{t-1}^d,z_{t}^d},
\end{align*}
where $|x|_{0}= \# \{x_{i}\neq 0\}$.
\\
Then, it is easy to check that the previous algorithm can be adapted to the new framework
\begin{align*}
\alpha_{t}(h)=\sum_{s=1}^{m}\alpha_{t-1}(s)K^{sh}W_{t-1}^h,
\end{align*}
\begin{align*}
\beta_{t}(h)=\sum_{l=1}^{m}\beta_{t+1}(l)K^{hl}W_{t}^h,
\end{align*}
\begin{align*}
\check{u}_{t}(h)=\Prob(\Theta_{t}=h|Z_{0|\Gamma}=z_{0|\Gamma})=\frac{\beta_{t+1}(h)\alpha_{t+1}(h)}{L_{\Gamma}},
\end{align*}
\begin{align*}
\check{v}_{t}(s,h)=\Prob(\Theta_{t}=h,\Theta_{t-1}=s|Z_{0|\Gamma}=z_{0|\Gamma})=\frac{\beta_{t+1}(h)K^{sh}\alpha_{t}(s)W_{t}^h}{L_{\Gamma}}.
\end{align*}
The forms of the transitions matrices $(L^{h})_{h}$, are a lot impacted by this adaptation. The maximisation does not run as simply as it does for the discrete setting. Explicit forms are heavy to derive. Then, these parameters are directly estimated with optimization algorithms.  
\section{Parameter estimations}
\label{ap:parameters}
In this appendix, we present the parameters chosen for the simulation in the testing framework described in Section \ref{sec:fictiveData}, with the estimated parameters issued from the EM algorithm. 

\begin{table}[H]
   \centering
    \footnotesize
    \caption{Initial and estimated rating transition matrix for $\Theta=0$}
    \begin{tabular}{|*{5}{c|}}
        \hhline{~*{3}{-}}
        \multicolumn{1}{c|}{} & $A$ & $B$ & $C$ \\ \hline
        $A$ & {0.98}&	{0.01}&	{0.01}
 \\ \hline
        $B$ & {0.29}&	{0.7}&	{0.01}
   \\ \hline
        $C$ & {0.1}&	{0.3}&	{0.6}
   \\ \hline
    \end{tabular}
    \hspace*{1cm}
    \begin{tabular}{|*{5}{c|}}
        \hhline{~*{3}{-}}
        \multicolumn{1}{c|}{} & $A$ & $B$ & $C$ \\ \hline
        $A$ & {0.9799}&	{0.0099}&	{0.0102}
\\ \hline
        $B$ & {0.2923}&	{0.6977}&	{0.0100}
   \\ \hline
        $C$ & {0.1023}&	{0.2962}&   {0.6016}
    \\ \hline
    \end{tabular}
     \label{tab:L0}
\end{table}

\begin{table}[H]
   \centering
    \footnotesize
    \caption{Initial and estimated rating transition matrix for $\Theta=1$}
    \begin{tabular}{|*{5}{c|}}
        \hhline{~*{3}{-}}
        \multicolumn{1}{c|}{} & $A$ & $B$ & $C$ \\ \hline
        $A$ & {0.98}&	{0.01}&	{0.01}
 \\ \hline
        $B$ & {0.39}&	{0.6}&	{0.01}
   \\ \hline
        $C$ & {0.2}&	{0.3}&	{0.5}
    \\ \hline
    \end{tabular}
    \hspace*{1cm}
    \begin{tabular}{|*{5}{c|}}
        \hhline{~*{3}{-}}
        \multicolumn{1}{c|}{} & $A$ & $B$ & $C$ \\ \hline
        $A$ & {0.9803}&	{0.0100}&	{0.0097}
 \\ \hline
        $B$ & {0.3887}&	{0.6018}&	{0.0095}
   \\ \hline
        $C$ & {0.2002}&	{0.3003}&	{0.4995}
    \\ \hline
    \end{tabular}
     \label{tab:L1}
\end{table}

\begin{table}[H]
   \centering
    \footnotesize
    \caption{Initial and estimated rating transition matrix for $\Theta=2$}
    \begin{tabular}{|*{5}{c|}}
        \hhline{~*{3}{-}}
        \multicolumn{1}{c|}{} & $A$ & $B$ & $C$ \\ \hline
        $A$ & {0.5}&	{0.3}&	{0.2}
 \\ \hline
        $B$ & {0.01}&	{0.6}&	{0.39}
   \\ \hline
        $C$ & {0.01}&	{0.01}&	{0.98}
    \\ \hline
    \end{tabular}
    \hspace*{1cm}
    \begin{tabular}{|*{5}{c|}}
        \hhline{~*{3}{-}}
        \multicolumn{1}{c|}{} & $A$ & $B$ & $C$ \\ \hline
        $A$ & {0.5072}&	{0.3002}&	{0.1926}
 \\ \hline
        $B$ & {0.0095}&	{0.6004}&	{0.3901}
   \\ \hline
        $C$ & {0.0099}&	{0.0103}&	{0.9798}
    \\ \hline
    \end{tabular}
     \label{tab:L2}
\end{table}

\begin{table}[H]
   \centering
    \footnotesize
    \caption{Initial and estimated rating transition matrix for $\Theta=3$}
    \begin{tabular}{|*{5}{c|}}
        \hhline{~*{3}{-}}
        \multicolumn{1}{c|}{} & $A$ & $B$ & $C$ \\ \hline
        $A$ & {0.98}&	{0.01}&	{0.01}
 \\ \hline
        $B$ & {0.39}&	{0.6}&	{0.01}
   \\ \hline
        $C$ & {0.2}&	{0.3}&	{0.5}
    \\ \hline
    \end{tabular}
    \hspace*{1cm}
    \begin{tabular}{|*{5}{c|}}
        \hhline{~*{3}{-}}
        \multicolumn{1}{c|}{} & $A$ & $B$ & $C$ \\ \hline
        $A$ & {0.9803}&	{0.0100}&	{0.0097}
 \\ \hline
        $B$ & {0.3887}&	{0.6018}&	{0.0095}
   \\ \hline
        $C$ & {0.2002}&	{0.3003}&	{0.4995}
   \\ \hline
    \end{tabular}
     \label{tab:L3}
\end{table}

\begin{table}[H]
   \centering
    \footnotesize
    \caption{Initial and estimated rating transition matrix for $\Theta=4$}
    \begin{tabular}{|*{5}{c|}}
        \hhline{~*{3}{-}}
        \multicolumn{1}{c|}{} & $A$ & $B$ & $C$ \\ \hline
        $A$ & {0.6}&	{0.3}&	{0.1}
 \\ \hline
        $B$ & {0.01}&	{0.7}&	{0.29}
   \\ \hline
        $C$ & {0.01}&	{0.01}&	{0.98}
   \\ \hline
    \end{tabular}
    \hspace*{1cm}
    \begin{tabular}{|*{5}{c|}}
        \hhline{~*{3}{-}}
        \multicolumn{1}{c|}{} & $A$ & $B$ & $C$ \\ \hline
        $A$ & {0.5993} &	{0.2992}&	{0.1015}
 \\ \hline
        $B$ & {0.0104}&	{0.6983}&	{0.2913}
  \\ \hline
        $C$ & {0.0099}&	{0.0095}&	{0.9806}
    \\ \hline
    \end{tabular}
     \label{tab:L4}
\end{table}

\begin{table}[H]
   \centering
    \footnotesize
    \caption{Initial and estimated rating transition matrix for $\Theta=5$}
    \begin{tabular}{|*{5}{c|}}
        \hhline{~*{3}{-}}
        \multicolumn{1}{c|}{} & $A$ & $B$ & $C$ \\ \hline
        $A$ & {0.8}&	{0.15}&	{0.05}
 \\ \hline
        $B$ & {0.01}&	{0.9}&	{0.09}
   \\ \hline
        $C$ & {0.01}&	{0.01}&	{0.98}
    \\ \hline
    \end{tabular}
    \hspace*{1cm}
    \begin{tabular}{|*{5}{c|}}
        \hhline{~*{3}{-}}
        \multicolumn{1}{c|}{} & $A$ & $B$ & $C$ \\ \hline
        $A$ & {0.8001}&	{0.1493}&	{0.0506}
\\ \hline
        $B$ & {0.0099}&	{0.8996}&	{0.0904}
   \\ \hline
        $C$ & {0.0101}&	{0.0098}&	{0.9801}
    \\ \hline
    \end{tabular}
     \label{tab:L5}
\end{table}

\begin{table}[H]
   \centering
    \footnotesize
    \caption{Initial and estimated rating transition matrix for $\Theta=6$}
    \begin{tabular}{|*{5}{c|}}
        \hhline{~*{3}{-}}
        \multicolumn{1}{c|}{} & $A$ & $B$ & $C$ \\ \hline
        $A$ & {0.98}&	{0.01}&	{0.01}
 \\ \hline
        $B$ & {0.09}&	{0.9}&	{0.01}
   \\ \hline
        $C$ & {0.05}&	{0.15}&	{0.8}
    \\ \hline
    \end{tabular}
    \hspace*{1cm}
    \begin{tabular}{|*{5}{c|}}
        \hhline{~*{3}{-}}
        \multicolumn{1}{c|}{} & $A$ & $B$ & $C$ \\ \hline
        $A$ & {0.9799}&	{0.0102}&	{0.0099}
 \\ \hline
        $B$ & {0.0908}&	{0.8991}&	{0.0101}
  \\ \hline
        $C$ & {0.0500}&	{0.1517}&	{0.7984}
   \\ \hline
    \end{tabular}
     \label{tab:L6}
\end{table}

\begin{table}[H]
    \centering
    \footnotesize
    \caption{$\Theta$'s transition matrix}
    \begin{tabular}{|*{9}{c|}}
        \hhline{~*{7}{-}}
        \multicolumn{1}{c|}{} & $\Theta=0$ & $\Theta=1$ & $\Theta=2$ & $\Theta=3$ & $\Theta=4$  & $\Theta=5$ & $\Theta=6$ \\ \hline
        $\Theta=0$ & {0.6}&	{0.3}&	{0.1}&	{0}&	{0}&	{0}&	{0}

   \\ \hline
        $\Theta=1$ & {0.25}&	{0.4}&	{0.25}&	{0.1}&	{0}	&{0}&	{0}
   \\ \hline
        $\Theta=2$ & {0.05}&	{0.15}&	{0.6}&	{0.15}	&{0.05}&	{0}&	{0}
   \\ \hline
        $\Theta=3$ & {0}&	{0.03}&	{0.12}&	{0.7}&	{0.12}&	{0.03}&	{0}

   \\ \hline
        $\Theta=4$ & {0}&	{0}&	{0.05}&	{0.15}&	{0.6}&	{0.15}&	{0.05}

 \\ \hline
        $\Theta=5$ & {0}&	{0}&	{0}&	{0.1}&	{0.25}&	{0.4}&	{0.25}

    \\ \hline
        $\Theta=6$ & {0}&	{0}&	{0}&	{0}&	{0.1}&	{0.3}&	{0.6}

 \\ \hline
    \end{tabular}
     \label{tab:K}
\end{table}

\begin{table}[H]
    \centering
    \footnotesize
    \caption{Estimated $\Theta$'s transition matrix}
    \begin{tabular}{|*{9}{c|}}
        \hhline{~*{7}{-}}
        \multicolumn{1}{c|}{} & $\Theta=0$ & $\Theta=1$ & $\Theta=2$ & $\Theta=3$ & $\Theta=4$  & $\Theta=5$ & $\Theta=6$ \\ \hline
        $\Theta=0$ & {0.6029}&	{0.3426}&	{0.0544}&	{0}&	{0}&	{0}&	{0}
   \\ \hline
        $\Theta=1$ & {0.2827}&	{0.4015}&	{0.2526}&	{0.0632} & {0}  & {0} & {0}   \\ \hline
        $\Theta=2$ & {0.0825}&	{0.1546}&	{0.5773}&	{0.1443}&	{0.0412}&	{0}&	{0}
   \\ \hline
        $\Theta=3$ & {0}&	{0.0583}&	{0.1083}&	{0.7000}&	{0.0750}&	{0.0583}&	{0}

   \\ \hline
        $\Theta=4$ & {0}&	{0}&	{0.0716}&	{0.1592}&	{0.5188}&	{0.2124}&	{0.0381}
 \\ \hline
        $\Theta=5$ & {0}&	{0}&	{0}&	{0.1765}&	{0.2475}&	{0.3053}&	{0.2708}
    \\ \hline
        $\Theta=6$ & {0}&	{0}&	{0}&	{0}&	{0.1237}&	{0.2851}&	{0.5912}
        \\\hline
    \end{tabular}
     \label{tab:estimated-K}
\end{table}

\bibliographystyle{plain}
\bibliography{references}

\begin{thebibliography}{10}

\bibitem{altman1992implications}
Edward~I Altman and Duen~Li Kao.
\newblock The implications of corporate bond ratings drift.
\newblock {\em Financial Analysts Journal}, 48(3):64--75, 1992.

\bibitem{PDLGD2017}
European~Banking Authority.
\newblock Guidelines on pd estimation, lgd estimation and the treatment of
  defaulted exposures.
\newblock Technical report, 2017.

\bibitem{bangia2002ratings}
Anil Bangia, Francis~X Diebold, Andr{\'e} Kronimus, Christian Schagen, and Til
  Schuermann.
\newblock Ratings migration and the business cycle, with application to credit
  portfolio stress testing.
\newblock {\em Journal of banking \& finance}, 26(2-3):445--474, 2002.

\bibitem{baum1970maximization}
Leonard~E Baum, Ted Petrie, George Soules, and Norman Weiss.
\newblock A maximization technique occurring in the statistical analysis of
  probabilistic functions of markov chains.
\newblock {\em The annals of mathematical statistics}, 41(1):164--171, 1970.

\bibitem{bishop2006pattern}
Christopher~M Bishop.
\newblock {\em Pattern recognition and machine learning}.
\newblock Springer New York, 2006.

\bibitem{Bremaud1981}
Pierre Br{\'e}maud.
\newblock {\em Point processes and queues: martingale dynamics}, volume~50.
\newblock Springer, 1981.

\bibitem{caja2015influence}
Anisa Caja, Quentin Guibert, and Fr{\'e}d{\'e}ric Planchet.
\newblock Influence of economic factors on the credit rating transitions and
  defaults of credit insurance business.
\newblock Technical report, 2015.

\bibitem{Carty1997}
Lea Carty.
\newblock Moody's rating migration and credit quality correlation.
\newblock {\em Moody’s Sepcial Report July}, 1997.

\bibitem{ching2009modeling}
Wai-Ki Ching, Tak~Kuen Siu, Li-min Li, Tang Li, and Wai-Keung Li.
\newblock Modeling default data via an interactive hidden markov model.
\newblock {\em Computational Economics}, 34(1):1--19, 2009.

\bibitem{Reda2015}
A.~Cousin and M.~R. Kheliouen.
\newblock A comparative study on the estimation of factor migration models.
\newblock {\em Bulletin français d'actuariat}, 2015.

\bibitem{damian2018algorithm}
Camilla Damian, Zehra Eksi, and R{\"u}diger Frey.
\newblock Em algorithm for markov chains observed via gaussian noise and point
  process information: Theory and case studies.
\newblock {\em Statistics \& Risk Modeling}, 35(1-2):51--72, 2018.

\bibitem{dempster1977maximum}
Arthur~P Dempster, Nan~M Laird, and Donald~B Rubin.
\newblock Maximum likelihood from incomplete data via the em algorithm.
\newblock {\em Journal of the Royal Statistical Society: Series B
  (Methodological)}, 39(1):1--22, 1977.

\bibitem{deroose2008reviewing}
Servaas Deroose, Werner Roeger, and Sven Langedijk.
\newblock Reviewing adjustment dynamics in emu: from overheating to
  overcooling.
\newblock {\em European Economy Economic Paper}, 2008.

\bibitem{duffie2007multi}
Darrell Duffie, Leandro Saita, and Ke~Wang.
\newblock Multi-period corporate default prediction with stochastic covariates.
\newblock {\em Journal of Financial Economics}, 83(3):635--665, 2007.

\bibitem{Stress2018}
European Banking~Authorities EBA.
\newblock Guidelines on institutions’ stress testing.
\newblock Technical report, 2018.

\bibitem{elliott2008hidden}
Robert~J Elliott, Lakhdar Aggoun, and John~B Moore.
\newblock {\em Hidden Markov models: estimation and control}, volume~29.
\newblock Springer Science \& Business Media, 2008.

\bibitem{elliott2014double}
Robert~J Elliott, Tak~Kuen Siu, and Eric~S Fung.
\newblock A double hmm approach to altman z-scores and credit ratings.
\newblock {\em Expert Systems with Applications}, 41(4):1553--1560, 2014.

\bibitem{feng2008ordered}
Dingan Feng, Christian Gouri{\'e}roux, and Joann Jasiak.
\newblock The ordered qualitative model for credit rating transitions.
\newblock {\em Journal of Empirical Finance}, 15(1):111--130, 2008.

\bibitem{figlewski2012modeling}
Stephen Figlewski, Halina Frydman, and Weijian Liang.
\newblock Modeling the effect of macroeconomic factors on corporate default and
  credit rating transitions.
\newblock {\em International Review of Economics \& Finance}, 21(1):87--105,
  2012.

\bibitem{fledelius2004non}
Peter Fledelius, David Lando, and Jens~Perch Nielsen.
\newblock Non-parametric analysis of rating transition and default data.
\newblock {\em Journal of Investment Management}, 2(2), 2004.

\bibitem{fontana2010credit}
Claudio Fontana and Wolfgang~J Runggaldier.
\newblock Credit risk and incomplete information: filtering and em parameter
  estimation.
\newblock {\em International Journal of Theoretical and Applied Finance},
  13(05):683--715, 2010.

\bibitem{Bale2017}
Bank for International~Settlements.
\newblock Basel committee on banking supervision basel iii: Finalising
  post-crisis reforms.
\newblock {\em Official Journal of the European Union}, 2017.

\bibitem{frey2012pricing}
R{\"u}diger Frey and Thorsten Schmidt.
\newblock Pricing and hedging of credit derivatives via the innovations
  approach to nonlinear filtering.
\newblock {\em Finance and Stochastics}, 16(1):105--133, 2012.

\bibitem{Gagliardini2005}
Patrick Gagliardini and Christian Gouri{\'e}roux.
\newblock Stochastic migration models with application to corporate risk.
\newblock {\em Journal of Financial Econometrics}, 3(2):188--226, 2005.

\bibitem{giampieri2005analysis}
Giacomo Giampieri, Mark Davis, and Martin Crowder.
\newblock Analysis of default data using hidden markov models.
\newblock {\em Quantitative Finance}, 5(1):27--34, 2005.

\bibitem{hamilton2004rating}
David~T Hamilton.
\newblock Rating transitions and defaults conditional on watchlist, outlook and
  rating history.
\newblock {\em Outlook and Rating History (February 2004)}, 2004.

\bibitem{jarrow1997markov}
Robert~A Jarrow, David Lando, and Stuart~M Turnbull.
\newblock A markov model for the term structure of credit risk spreads.
\newblock {\em The review of financial studies}, 10(2):481--523, 1997.

\bibitem{Karr1991}
Alan Karr.
\newblock {\em Point processes and their statistical inference}.
\newblock Routledge, 2017.

\bibitem{kavvathas2001estimating}
Dimitrios Kavvathas.
\newblock Estimating credit rating transition probabilities for corporate
  bonds.
\newblock In {\em AFA 2001 New Orleans Meetings}, 2001.

\bibitem{koopman2008multi}
Siem~Jan Koopman, Andr{\'e} Lucas, and Andr{\'e} Monteiro.
\newblock The multi-state latent factor intensity model for credit rating
  transitions.
\newblock {\em Journal of Econometrics}, 142(1):399--424, 2008.

\bibitem{korolkiewicz2008hidden}
Ma{\l}gorzata~W Korolkiewicz and Robert~J Elliott.
\newblock A hidden markov model of credit quality.
\newblock {\em Journal of Economic Dynamics and Control}, 32(12):3807--3819,
  2008.

\bibitem{lando2002analyzing}
David Lando and Torben~M Sk{\o}deberg.
\newblock Analyzing rating transitions and rating drift with continuous
  observations.
\newblock {\em Journal of banking \& finance}, 26(2-3):423--444, 2002.

\bibitem{Leijdekker_Spreij2008}
Vincent Leijdekker and Peter Spreij.
\newblock Explicit computations for a filtering problem with point process
  observations with applications to credit risk.
\newblock {\em Probability in the Engineering and Informational Sciences},
  25(3):393--418, 2011.

\bibitem{liu2014proper}
Tingting Liu, Jan Lemeire, and Lixin Yang.
\newblock Proper initialization of hidden markov models for industrial
  applications.
\newblock In {\em 2014 IEEE China summit \& international conference on signal
  and information processing (ChinaSIP)}, pages 490--494. IEEE, 2014.

\bibitem{liu2017learning}
Yu-Ying Liu, Alexander Moreno, Shuang Li, Fuxin Li, Le~Song, and James~M Rehg.
\newblock Learning continuous-time hidden markov models for event data.
\newblock In {\em Mobile Health}, pages 361--387. Springer, 2017.

\bibitem{merton1974pricing}
Robert~C Merton.
\newblock On the pricing of corporate debt: The risk structure of interest
  rates.
\newblock {\em The Journal of finance}, 29(2):449--470, 1974.

\bibitem{nickell2000stability}
Pamela Nickell, William Perraudin, and Simone Varotto.
\newblock Stability of rating transitions.
\newblock {\em Journal of Banking \& Finance}, 24(1-2):203--227, 2000.

\bibitem{nodelman2012expectation}
Uri Nodelman, Christian~R Shelton, and Daphne Koller.
\newblock Expectation maximization and complex duration distributions for
  continuous time bayesian networks.
\newblock {\em arXiv preprint arXiv:1207.1402}, 2012.

\bibitem{oh2019estimation}
Sung~Youl Oh, Jae~Wook Song, Woojin Chang, and Minhyuk Lee.
\newblock Estimation and forecasting of sovereign credit rating migration based
  on regime switching markov chain.
\newblock {\em IEEE Access}, 7:115317--115330, 2019.

\bibitem{Ozcan2013}
Emre {\"O}zkan, Fredrik Lindsten, Carsten Fritsche, and Fredrik Gustafsson.
\newblock Recursive maximum likelihood identification of jump markov nonlinear
  systems.
\newblock {\em IEEE Transactions on Signal Processing}, 63(3):754--765, 2014.

\bibitem{Rabinet1989}
Lawrence~R Rabiner.
\newblock A tutorial on hidden markov models and selected applications in
  speech recognition.
\newblock {\em Proceedings of the IEEE}, 77(2):257--286, 1989.

\bibitem{IFRS92016}
European~Commission Regulation.
\newblock Amending regulation (ec) no 1126/2008 adopting certain international
  accounting standards in accordance with regulation (ec) no 1606/2002 of the
  european parliament and of the council as regards international financial
  reporting standard 9.
\newblock {\em Official Journal of the European Union}, 2016.

\bibitem{schwaab2017global}
Bernd Schwaab, Siem~Jan Koopman, and Andr{\'e} Lucas.
\newblock Global credit risk: World, country and industry factors.
\newblock {\em Journal of Applied Econometrics}, 32(2):296--317, 2017.

\bibitem{Tenyakov2014}
Anton Tenyakov.
\newblock Estimation of hidden markov models and their applications in finance.
\newblock {\em Electronic Thesis and Dissertation Repository}, 2014.

\bibitem{thomas2002hidden}
Lyn~C Thomas, David~E Allen, and Nigel Morkel-Kingsbury.
\newblock A hidden markov chain model for the term structure of bond credit
  risk spreads.
\newblock {\em International Review of Financial Analysis}, 11(3):311--329,
  2002.

\bibitem{tobin1958estimation}
James Tobin.
\newblock Estimation of relationships for limited dependent variables.
\newblock {\em Econometrica: journal of the Econometric Society}, pages 24--36,
  1958.

\bibitem{vanSchuppen1997}
JH~Van~Schuppen.
\newblock Filtering, prediction and smoothing for counting process
  observations, a martingale approach.
\newblock {\em SIAM Journal on Applied Mathematics}, 32(3):552--570, 1977.

\end{thebibliography}

\end{document}